\newcommand{\e}{\ensuremath{\epsilon}}
\newcommand{\ve}{\ensuremath{\varepsilon}}
\newcommand{\rto}{\ensuremath{\rightarrow}}
\newcommand{\lem}{\ensuremath{\lesssim}}
\newtheorem{theorem}{Theorem}[section]
\newtheorem{definition}[theorem]{Definition}
\newtheorem{lemma}[theorem]{Lemma}
\newtheorem{remark}[theorem]{Remark}
\numberwithin{equation}{section}
\title{\Large Initial Layer and Relaxation Limit of Non-Isentropic Compressible Euler Equations with Damping}
\author{\normalsize Fuzhou Wu\thanks{E-mail: michael8723@gmail.com; fuzhou.wu@yahoo.com} \\
\small\it  Mathematical Sciences Center, Tsinghua University\\
\small\it  Beijing 100084, China
}
\date{\normalsize }
\begin{document}
\maketitle
\setlength\parindent{2em}
\setlength\parskip{5pt}

\begin{abstract}
\normalsize{
In this paper, we study the relaxation limit of the relaxing Cauchy problem for non-isentropic compressible Euler equations with damping in multi-dimensions. We prove that the velocity of the relaxing equations converges weakly to that of the relaxed equations, while other variables of the relaxing equations converges strongly to the corresponding variables of the relaxed equations. We show that as relaxation time approaches $0$, there exists an initial layer for the ill-prepared data, the convergence of the velocity is strong outside the layer; while there is no initial layer for the well-prepared data, the convergence of the velocity is strong near $t=0$.
}
\\
\par
\small{
\textbf{Keywords}: non-isentropic Euler equation with damping, relaxation limit,
ill-prepared data, initial layer
}
\end{abstract}

\tableofcontents

\section{Introduction}
In this paper, we use $p, u, S, \varrho$ to denote the pressure, velocity, entropy and density of ideal gases respectively with the equation of state
\begin{equation*}
\begin{array}{ll}
\varrho = \varrho(p,S)
:= \frac{1}{\sqrt[\gamma]{A}}p^{\frac{1}{\gamma}}\exp\{-\frac{S}{\gamma}\},
\end{array}
\end{equation*}
where $A>0$, $\gamma=\frac{C_p}{C_V}>1$ are constants.
Assume the numbers $\bar{p},\bar{\varrho},\bar{S}$ satisfy $\bar{p}>0,\bar{\varrho}>0$, $\bar{p}=A\bar{\varrho}^{\gamma}e^{\bar{S}}$.
Then we study the relaxation limit of the relaxing Cauchy problem for 3D non-isentropic compressible Euler equations with damping:
\begin{equation}\label{Sect1_Relaxing_Eq}
\left\{\begin{array}{ll}
p_t + u\cdot\nabla p + \gamma p\nabla\cdot u = 0, \\[6pt]
\tau^2 u_t + \tau^2 u\cdot\nabla u + \frac{1}{\varrho}\nabla p + u =0, \\[6pt]
S_t + u\cdot\nabla S =0, \\[6pt]
(p,u,S)(x,0)=(p_0(x,\tau), \frac{u_0(x,\tau)}{\tau}, S_0(x,\tau)),
\end{array}\right.
\end{equation}
where $(p_0(x,\tau),u_0(x,\tau),S_0(x,\tau))$ are small perturbations of $(\bar{p},0,\bar{S})$. In \eqref{Sect1_Relaxing_Eq}, $(p,u,S,\varrho)\rto (\bar{p},0,\bar{S},\bar{\varrho})$ as $|x|\rto +\infty$.
$\tau$ is a small positive parameter representing the relaxation time, let $\tau\in (0,1]$. The density $\varrho$ satisfies the equation
$\varrho_t + u\cdot\nabla\varrho + \varrho\nabla\cdot u =0$ by \eqref{Sect1_Relaxing_Eq}, and
$\varrho_0(x,\tau)=\varrho(p_0(x,\tau),S_0(x,\tau))$ is small perturbation of $\bar{\varrho}=\varrho(\bar{p},\bar{S})$.

The equations $(\ref{Sect1_Relaxing_Eq})$ are derived from
\begin{equation}\label{Sect1_NonIsentropic_EulerEq}
\left\{\begin{array}{lll}
\hat{p}_{t^{\prime}} + \hat{u}\cdot\nabla \hat{p} + \gamma \hat{p}\nabla\cdot \hat{u} = 0, \\[6pt]
\hat{u}_{t^{\prime}} + \hat{u}\cdot\nabla \hat{u} + \frac{1}{\hat{\varrho}}\nabla \hat{p} + \frac{1}{\tau} \hat{u} =0, \\[6pt]
\hat{S}_{t^{\prime}} + \hat{u}\cdot\nabla \hat{S} =0, \\[6pt]
(\hat{p},\hat{u},\hat{S})(x,0)=(p_0(x,\tau), u_0(x,\tau), S_0(x,\tau)),
\end{array}\right.
\end{equation}
with the rescaling  of variables:
\begin{equation}\label{Sect1_Rescaling_Variables}
\begin{array}{lll}
t=\tau t^{\prime},
u(x,t) = \frac{\hat{u}}{\tau}(x,t^{\prime}),
p(x,t) = \hat{p}(x,t^{\prime}),
\varrho(x,t) = \hat{\varrho}(x,t^{\prime}),
S(x,t) = \hat{S}(x,t^{\prime}),
\end{array}
\end{equation}
then $(p,u,S,\varrho)$ satisfy the equations $(\ref{Sect1_Relaxing_Eq})$.

Let $\tau\rto 0$ in the relaxing equations $(\ref{Sect1_Relaxing_Eq})$, we formally obtain the following relaxed equations
\begin{equation}\label{Sect1_Relaxed_Eq}
\left\{\begin{array}{ll}
p_t + u\cdot\nabla p + \gamma p\nabla\cdot u = 0, \\[6pt]
\frac{1}{\varrho}\nabla p + u =0, \\[6pt]
S_t + u\cdot\nabla S =0, \\[6pt]
(p,S)(x,0)=(\lim\limits_{\tau\rto 0}p_0(x,\tau), \lim\limits_{\tau\rto 0}S_0(x,\tau)),
\end{array}\right.
\end{equation}
where $\varrho = \varrho(p,S)$.

Due to its fundamental importance in both application and nonlinear PDE theory, the relaxation limit problems have been attracting much attention. We survey there some results closely related to this paper.
\par
For the relaxing isothermal compressible Euler equations with damping:
\begin{equation}\label{Sect1_Isothermal_EulerEq}
\left\{\begin{array}{ll}
\varrho_t + \nabla\cdot(\varrho u)=0, \\[6pt]
\varrho u_t + \varrho u\cdot\nabla u+ \bar{\sigma}^2\nabla \varrho + \frac{1}{\tau}\varrho u =0,
\end{array}\right.
\end{equation}
where $\bar{\sigma}^2 = \mathcal{R}\theta_{\ast}$ is constant.
In the Sobolev space $H^s(\mathbb(R)^d),s\in \mathbb{Z},s>1+\frac{d}{2}$ (see \cite{Coulombel_Goudon_2007}),
$-\frac{\varrho u}{\tau}-\bar{\sigma}^2\nabla \varrho\rightharpoonup 0\ in\ \mathcal{D}^{\prime}(\mathbb{R}^d\times\mathbb{R}^{+})$,
$\varrho$ converges strongly to the solution of the heat equation in $C([0,T],H^{s^{\prime}}(B_r))$, where $0<s^{\prime}<s$, $B_r$ is a ball with radius $r$.
The results of \cite{Coulombel_Goudon_2007} was extended in \cite{Xu_Fang_2007} to more general Sobolev space of fractional order. In \cite{Junca_Rascle_2002}, $(\ref{Sect1_Isothermal_EulerEq})$ was studied in one dimension with BV large data away from vacuum, and it was proved in \cite{Junca_Rascle_2002} that  $\varrho$ converges strongly to the solution of the heat equation in $L^2(\mathbb{R}\times[0,T])$ (global in space) by using the stream function.

\par
For the relaxing isentropic compressible Euler equations with damping
\begin{equation}\label{Sect1_Isentropic_EulerEq}
\left\{\begin{array}{ll}
\varrho_t + \nabla\cdot(\varrho u)=0, \\[6pt]
\varrho u_t + \varrho u\cdot\nabla u +\nabla p + \frac{1}{\tau} \varrho u = 0,
\end{array}\right.
\end{equation}
where $p(\rho)=A\rho^{\gamma}$.
In the Sobolev space $H^s(\mathbb(R)^d),s\in \mathbb{Z},s>1+\frac{d}{2}$ (see \cite{Lin_Coulombel_2013}),
$-\frac{\varrho u}{\tau}-\nabla p\rightharpoonup 0\ in\ \mathcal{D}^{\prime}(\mathbb{R}^d\times\mathbb{R}^{+})$,
$\varrho$ converges strongly to the solution of the porous media equation in $C([0,T],H^{s^{\prime}}(B_r))$, where $0<s^{\prime}<s$.
In the Besov space $\mathcal{B}_{2,1}^{\sigma}(\mathbb{R}^d),\sigma=1+\frac{d}{2}$ (see \cite{Xu_Wang_2013})
and in the Chemin-Lerner space (see \cite{Xu_Kawashima_2014}), the density of $(\ref{Sect1_Isentropic_EulerEq})$ converges strongly to the solution of the porous media equation.

Relaxation limit problem also appears in Euler-Poisson equations, see \cite{Marcati_Natalini_1995,Jungel_Peng_1999,Lattanzio_Marcati_1999,Hsiao_Zhang_2000} for weak solutions and \cite{Chen_Jerome_Zhang_1997,Xu_2009} for smooth solutions. It has been proved that the current density, which is the product of the electron density and electron velocity, converges weakly to that of the drift-diffusion model. If the initial data are well-prepared, the current density converges strongly to that of the drift-diffusion model (see \cite{Yong_2004}).
If the initial data are ill-prepared, the authors (see \cite{Nishibata_Suzuki_2010}) proved the difference between the current density of 1D hydrodynamic model and that of the drift-diffusion model decays exponentially fast in the large time interval $[0,\frac{1}{\beta}\log(\frac{1}{\tau^{\lambda}})]$ with $\lambda\in (0,1),\beta>0$. The key of the proof in \cite{Nishibata_Suzuki_2010} is that the solutions of the relaxing and relaxed equations converge to the corresponding stationary solutions exponentially fast while both stationary solutions are close to each other.
As to the relaxation limit of weak solutions (see \cite{Gasser_Natalini_1999}) and classical solutions (see \cite{Ali_Bini_Rionero_2000,Xu_Yong_2009,Li_2007}) to non-isentropic Euler-Poisson equations, the current density converges weakly to that of the energy-transportation model or drift-diffusion model.

However, there have been no rigorous analysis of the initial layer and strong convergence of the velocity for the ill-prepared data in the above mentioned papers. A main distinction of results in this paper is that we give results on the initial layer and strong convergence of the velocity. Our main concern is the non-isentropic flow $(\ref{Sect1_Relaxing_Eq})$, but our results are valid for the isentropic flow $(\ref{Sect1_Isentropic_EulerEq})$ and isothermal flow (\ref{Sect1_Isothermal_EulerEq}) (assuming no vacuum). We show that for the ill-prepared initial data, there exists an initial layer $[0,t^{\ast}]$ for the velocity, where $0<t^{\ast}<C\tau^{2-\delta},\delta>0$. Outside the initial layer, the velocity of the relaxing equations converge strongly to that of the relaxed equations. Only for the well-prepared initial data, there is no initial layer, the convergence of the velocity is strong near $t=0$. The key of our analysis in this paper is uniform a priori estimates with respect to $\tau$ and pointwise decay of the quantity $u+\frac{1}{\varrho}\nabla p$.
Also, the methods in this paper can be applied to the relaxation limit problems for Euler-Poisson equations and Euler-Maxwell equations.

The first result in this paper is the following convergence result:
\begin{theorem}\label{Sect1_Thm1}
Suppose that the initial data for the relaxing Cauchy problem $(\ref{Sect1_Relaxing_Eq})$ satisfy $(p_0(x,\tau),u_0(x,\tau),S_0(x,\tau))\in H^4(\mathbb{R}^3)$,  $\inf\limits_{x\in\mathbb{R}^3}\lim\limits_{\tau\rto 0}p_0(x,\tau)>0$, $\inf\limits_{x\in\mathbb{R}^3} p_0(x,\tau)>0$,
$\|(p_0(x,\tau)-\bar{p},u_0(x,\tau),S_0(x,\tau)-\bar{S})\|_{H^4(\mathbb{R}^3)}$ is sufficiently small for some constants $\bar p>0$ and $\bar S$. Then for any finite $T>0$,  the problem $(\ref{Sect1_Relaxing_Eq})$ admits a unique solution $(p,u,S,\varrho)$ in $[0 , T]$ satisfying
\begin{equation}\label{Sect1_Uniform_Regularity}
\begin{array}{ll}
\partial_t^{\ell}(p-\bar{p}),\ \tau \partial_t^{\ell}u,\ \partial_t^{\ell}(S-\bar{S}),\ 
\partial_t^{\ell}(\varrho-\bar{\varrho})
\in L^{\infty}([0,T],H^{4-\ell}(\mathbb{R}^3)),0\leq\ell\leq 2,\\[6pt]
p-\bar{p},\ u,\ S-\bar{S},\ \varrho-\bar{\varrho} \in \underset{0\leq \ell\leq 2}{\cap}H^{\ell}([0,T],H^{4-\ell}(\mathbb{R}^3)),
\end{array}
\end{equation}
such that as $\tau\rto 0$,
\begin{equation}\label{Sect1_Convergence_Result}
\begin{array}{ll}
p \rto \tilde{p}\quad in\ C([0,T],C^{2+\mu_1}(\mathcal{K})\cap W^{3,\mu_2}(\mathcal{K})),
\mu_1\in (0,\frac{1}{2}),2\leq \mu_2<6, \\[6pt]
S \rto \tilde{S}\quad in\ C([0,T],C^{2+\mu_1}(\mathcal{K})\cap W^{3,\mu_2}(\mathcal{K})),
\mu_1\in (0,\frac{1}{2}),2\leq \mu_2<6, \\[6pt]
\varrho \rto \tilde{\varrho}\quad in\ C([0,T],C^{2+\mu_1}(\mathcal{K})\cap W^{3,\mu_2}(\mathcal{K})),
\mu_1\in (0,\frac{1}{2}),2\leq \mu_2<6, \\[6pt]
u \rightharpoonup \tilde{u}\quad in\ \underset{0\leq \ell\leq 2}{\cap}H^\ell([0,T],H^{4-\ell}(\mathbb{R}^3)), 
\end{array}
\end{equation}
for some function $(\tilde{p}, \tilde{S}, \tilde{\varrho}, \tilde{u})$ which is a weak solution to the relaxed equations $(\ref{Sect1_Relaxed_Eq})$ with the data $(\lim\limits_{\tau\rto 0}p_0(x,\tau),\lim\limits_{\tau\rto 0}S_0(x,\tau))$,
where $\mathcal{K}$ denotes any compact subset of $\mathbb{R}^3$. 
$(\tilde{p}, \tilde{S}, \tilde{\varrho}, \tilde{u})$ is the classical solution to $(\ref{Sect1_Relaxed_Eq})$,
if $(\lim\limits_{\tau\rto 0}p_0(x,\tau)-\bar{p},
\lim\limits_{\tau\rto 0}S_0(x,\tau)-\bar{S})\in H^5(\mathbb{R}^3)\times H^4(\mathbb{R}^3)$ is satisfied.
\end{theorem}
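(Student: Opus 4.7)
The plan is to first establish local-in-time existence for each fixed $\tau\in(0,1]$, then derive uniform-in-$\tau$ a priori estimates to extend the solution to $[0,T]$, and finally pass to the limit by compactness and identify the limit as a solution of $(\ref{Sect1_Relaxed_Eq})$. Local existence is standard: I rewrite $(\ref{Sect1_Relaxing_Eq})$ in symmetric hyperbolic form by dividing the pressure equation by $\gamma p$ and treating $(p-\bar p,\,\tau u,\,S-\bar S)$ as the state vector, with $\varrho=\varrho(p,S)$ substituted into the coefficients; the damping term is a lower-order semilinear perturbation. Kato--Majda theory then produces a unique local $H^4$ solution on some $[0,T_\tau]$.

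The heart of the argument, and the expected main obstacle, is closing a $\tau$-independent a priori estimate. Testing the momentum equation against $u$ produces both $\frac{d}{dt}\tau^{2}\|u\|_{L^{2}}^{2}$ and the dissipation $\|u\|_{L^{2}}^{2}$ coming from the damping. Adding the weighted estimates for $p-\bar p$ and $S-\bar S$ obtained from the first and third equations, and differentiating up to four times in $x$ (and in $t$, using the equation to trade $\tau^{2}\partial_{t}$ for spatial derivatives plus a dissipative contribution), I expect to close
$$\sup_{0\le t\le T}\mathcal{E}(t)+\int_{0}^{T}\|u(s)\|_{H^{4}}^{2}\,ds\le C\,\mathcal{E}(0),$$
for an energy $\mathcal{E}(t)$ equivalent to the norms in $(\ref{Sect1_Uniform_Regularity})$; the smallness hypothesis is used both to control Moser--Kato commutators and to keep $\varrho$ bounded away from $0$. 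The technical difficulty is that two time scales ($\partial_{t}$ and $\tau^{-2}$) coexist in the same equation, forcing a $\tau$-weighting on the velocity variable; in particular, bounds on $\partial_{t}^{\ell}u$ come from iterating $\tau^{2}\partial_{t}u=-u-\tau^{2}u\cdot\nabla u-\frac{1}{\varrho}\nabla p$ once more, which requires a careful accounting of the powers of $\tau$ so that $\tau\partial_{t}^{\ell}u$ (rather than $\partial_{t}^{\ell}u$) is controlled uniformly, exactly as recorded in $(\ref{Sect1_Uniform_Regularity})$.

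Once $(\ref{Sect1_Uniform_Regularity})$ is available uniformly in $\tau$, subsequential weak-$\ast$ limits $(\tilde p,\tilde u,\tilde S,\tilde\varrho)$ exist in the topologies listed in $(\ref{Sect1_Convergence_Result})$. The uniform $L^{\infty}_{t}H^{3}$ bound on first-order time derivatives of $p,S,\varrho$, combined with the Aubin--Lions lemma and the compact embedding of $H^{4}(\mathcal{K})$ into $C^{2+\mu_{1}}(\mathcal{K})\cap W^{3,\mu_{2}}(\mathcal{K})$, upgrades the convergence of $p,S,\varrho$ to the strong sense stated in $(\ref{Sect1_Convergence_Result})$. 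Rewriting the momentum equation as $u+\frac{1}{\varrho}\nabla p=-\tau^{2}(u_{t}+u\cdot\nabla u)$, the right-hand side vanishes in $\mathcal{D}'$ because $u_{t}$ and $u\cdot\nabla u$ are uniformly bounded in $L^{\infty}_{t}L^{2}$; the left-hand side converges by weak$\times$strong pairing since $\nabla p$ and $1/\varrho$ converge strongly, yielding $\tilde u=-\frac{1}{\tilde\varrho}\nabla\tilde p$ in the limit. The nonlinear products $u\cdot\nabla p$ and $u\cdot\nabla S$ pass to the limit by the same weak$\times$strong argument.

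Finally, substituting $\tilde u=-\frac{1}{\tilde\varrho}\nabla\tilde p$ into the pressure equation produces a quasilinear parabolic equation for $\tilde p$ of porous-medium type, coupled to a transport equation for $\tilde S$ carried by $\tilde u$. Under the additional regularity $\lim_{\tau\to 0}(p_{0}-\bar p,\,S_{0}-\bar S)\in H^{5}\times H^{4}$, standard parabolic theory produces a unique local classical solution; this uniqueness promotes the subsequential convergence obtained above to convergence of the full family as $\tau\to 0$, completing the proof.
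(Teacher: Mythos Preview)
Your proposal is correct and follows essentially the same route as the paper: uniform-in-$\tau$ energy estimates with the velocity carrying a $\tau$-weight and the damping producing $\int_0^T\|u\|_{H^4}^2\,dt$ dissipation, then Aubin--Lions compactness for $(p,S,\varrho)$ and weak--strong passage to the limit in the momentum and transport equations. Two technical points the paper makes explicit and your sketch leaves implicit are (i) the entropy carries no dissipation and is bounded \emph{separately} by Gr\"onwall using the already-obtained $L^1_tL^\infty_x$ control on $\nabla u$ (this is precisely where finiteness of $T$ is used), and (ii) closing the top-order commutators requires modified energies with multiplicative weights $1-\xi/p$ on the pressure and $\varrho/\bar\varrho$ on the velocity, together with the structural inequality $\|\nabla\xi\|_{H^3}^2+\|\xi_t\|_{H^3}^2\lesssim \mathcal{E}[v]$ (read off from the momentum and pressure equations) that lets the pressure error terms be absorbed by the velocity dissipation.
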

The main results concerned with the initial layer and strong convergence of the velocity are stated in the following Theorem:
\begin{theorem}\label{Sect1_Thm2}
Let $(p,u,S,\varrho)$ and $(\tilde{p},\tilde{u},\tilde{S},\tilde{\varrho})$ be the solutions obtained in  Theorem $\ref{Sect1_Thm1}$ in $[0, T]$ and $\mathcal{K}$ denotes any compact subset of $\mathbb{R}^3$. Then it holds that,  for the ill-prepared data, i.e., $\lim\limits_{\tau\rto 0}\left|\frac{1}{\tau}u_0(x,\tau) + \frac{1}{\varrho_0(x,\tau)}\nabla p_0(x,\tau)\right|_{\infty} \neq 0$,
there exists an initial layer $[0, t^{\ast}]$ with $0<t^{\ast}< C\tau^{2-\delta}$ for the velocity $u$, where $C>0$, $\delta>0$, such that as $\tau\rto 0$,
\begin{equation}\label{Sect1_Strong_Convergence_Result1}
\begin{array}{ll}
u \rto \tilde{u}\quad in\ C((0, T],\ C^{0+\mu_1}(\mathcal{K})\cap W^{1,\mu_2}(\mathcal{K})),
\mu_1\in (0,\frac{1}{2}),2\leq \mu_2<6;
\end{array}
\end{equation}
for the well-prepared data, i.e., $\lim\limits_{\tau\rto 0}\left\|\frac{1}{\tau}u_0(x,\tau) + \frac{1}{\varrho_0(x,\tau)}\nabla p_0(x,\tau)\right\|_{H^2(\mathbb{R}^3)} =0$, as $\tau\rto 0$,
\begin{equation}\label{Sect1_Strong_Convergence_Result2}
\begin{array}{ll}
u \rto \tilde{u}\quad in\ C([0,T],C^{0+\mu_1}(\mathcal{K})\cap W^{1,\mu_2}(\mathcal{K})),
\mu_1\in (0,\frac{1}{2}),2\leq \mu_2<6.
\end{array}
\end{equation}
\end{theorem}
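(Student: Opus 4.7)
The plan is to introduce the defect
\[
w \;:=\; u + \tfrac{1}{\varrho}\nabla p,
\]
which measures the failure of the momentum equation to be at equilibrium. Since $\tilde u = -\tfrac{1}{\tilde\varrho}\nabla\tilde p$ by \eqref{Sect1_Relaxed_Eq}, decay of $w$ together with the strong convergence of $p, \varrho$ from Theorem~\ref{Sect1_Thm1} will yield the convergence of $u$. Substituting $u = w - \tfrac{1}{\varrho}\nabla p$ into the momentum equation of \eqref{Sect1_Relaxing_Eq} and rearranging, $w$ satisfies the dissipative relaxation equation
\[
\tau^2 \partial_t w + w \;=\; \tau^2\bigl[\partial_t\bigl(\tfrac{1}{\varrho}\nabla p\bigr) - u\cdot\nabla u\bigr] \;=:\; \tau^2 R,
\]
whose linear part has rate $1/\tau^2$, producing the separation of scales that accounts for the $\tau^2$-size initial layer.

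The first step is to bound $R$ in $L^\infty_t H^2(\mathbb R^3)$ uniformly in $\tau$. The time-derivative piece is controlled by Theorem~\ref{Sect1_Thm1}, which gives $p_t, \varrho_t \in L^\infty_t H^3$ uniformly, so $\partial_t(\tfrac{1}{\varrho}\nabla p) \in L^\infty_t H^2$. For $u\cdot\nabla u$ I would exploit the factorization $\tau^2 u\cdot\nabla u = (\tau u)\cdot(\tau\nabla u)$ with $\tau u \in L^\infty_t H^4$ from \eqref{Sect1_Uniform_Regularity} and the algebra property of $H^2(\mathbb R^3)$. Testing the ODE against $w$ in $H^2$ then gives
\[
\frac{d}{dt}\|w\|_{H^2}^2 \;+\; \frac{1}{\tau^2}\|w\|_{H^2}^2 \;\le\; C\tau^2,
\]
and Gronwall yields $\|w(\cdot,t)\|_{H^2} \le C\,e^{-t/(2\tau^2)}\|w(\cdot,0)\|_{H^2} + C\tau^2$.

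From this estimate the two cases separate. For ill-prepared data, $\|w(\cdot,0)\|_{H^2}$ is only $O(1)$, so the exponential factor determines when the initial contribution is negligible; taking $t^\ast = C\tau^{2-\delta}$ with any $\delta\in(0,2)$ gives $t^\ast/\tau^2 = C\tau^{-\delta}\to\infty$, whence $e^{-t^\ast/(2\tau^2)} \to 0$ super-polynomially in $\tau$, so $\|w\|_{H^2(\mathbb R^3)}\to 0$ uniformly on $[t^\ast, T]$. For well-prepared data the hypothesis is exactly $\|w(\cdot,0)\|_{H^2}\to 0$, giving uniform convergence on $[0, T]$. Writing
\[
u - \tilde u \;=\; w \;-\; \Bigl(\tfrac{1}{\varrho}\nabla p - \tfrac{1}{\tilde\varrho}\nabla\tilde p\Bigr),
\]
and applying the strong convergence of $p, \varrho$ from Theorem~\ref{Sect1_Thm1} (together with the chain rule on $\mathcal K$, where $\varrho$ is bounded below by positivity of $\bar\varrho$) shows the parenthesized term tends to zero in $C([0,T], C^{1+\mu_1}(\mathcal K)\cap W^{2,\mu_2}(\mathcal K))$. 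Combining this with the decay of $w$ and the Sobolev embedding $H^2(\mathcal K)\hookrightarrow C^{0+\mu_1}(\mathcal K)\cap W^{1,\mu_2}(\mathcal K)$ for $\mu_1\in(0,\tfrac12)$, $\mu_2\in[2,6)$ produces \eqref{Sect1_Strong_Convergence_Result1} and \eqref{Sect1_Strong_Convergence_Result2}, respectively.

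The main technical obstacle is the uniform $H^2$ bound on the nonlinear source $u\cdot\nabla u$: since \eqref{Sect1_Uniform_Regularity} controls $u$ only through the weighted quantity $\tau u$, a naive estimate yields $u\cdot\nabla u = O(\tau^{-2})$, and the $\tau^2$ prefactor in $\tau^2 R$ only just compensates. The bookkeeping must therefore exploit the factorization $(\tau u)\cdot(\tau\nabla u)$ consistently so that no stray factor of $\tau^{-1}$ survives; any residual commutator term proportional to $\|w\|_{H^2}/\tau$ is absorbed into the dissipation $\|w\|_{H^2}^2/\tau^2$ on the left-hand side via Cauchy--Schwarz, closing the Gronwall argument.
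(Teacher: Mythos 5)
Your proposal follows essentially the same route as the paper: your defect $w = u + \tfrac{1}{\varrho}\nabla p$ is exactly $k_1\eta$ in the paper's notation (see \eqref{Sect1_Relaxation_Wave}), your ODE $\tau^2 w_t + w = \tau^2 R$ is the unexpanded form of \eqref{Sect2_Wave_Eq}--\eqref{Sect2_Eta_Velocity}, and the Gronwall-with-integrating-factor step, the exponential separation $e^{-t/\tau^2}$, the width $t^\ast \sim \tau^{2-\delta}$, and the final Sobolev embedding $H^2\hookrightarrow C^{0+\mu_1}\cap W^{1,\mu_2}$ all mirror the paper's Section~6. So the architecture is right.

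The gap is in the uniform $H^2$ control of $R$, which is the crux. Your proposed mechanism, the factorization $\tau^2 u\cdot\nabla u = (\tau u)\cdot(\tau\nabla u)$, uses only the weighted bound $\tau u\in L^\infty_t H^4$ from \eqref{Sect1_Uniform_Regularity} and therefore yields $\|u\cdot\nabla u\|_{H^2} = O(\tau^{-2})$, not $O(1)$. Plugging that into the Young split $\langle R,w\rangle \le \frac{1}{2\tau^2}\|w\|^2 + \frac{\tau^2}{2}\|R\|^2$ gives a right-hand side of order $\tau^2\cdot\tau^{-4} = \tau^{-2}$, and the Gronwall estimate produces $\|w\|^2 \le e^{-t/\tau^2}\|w_0\|^2 + O(1)$, which does \emph{not} vanish as $\tau\to 0$. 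So the inequality $\frac{d}{dt}\|w\|_{H^2}^2 + \frac{1}{\tau^2}\|w\|_{H^2}^2 \le C\tau^2$ as you have stated it does not follow from the factorization you invoke, and the proof does not close. Two repairs are available. One is to note that the \emph{second} line of \eqref{Sect1_Uniform_Regularity} gives $u\in H^1([0,T],H^3)\hookrightarrow C([0,T],H^3)$ with a $\tau$-independent bound (this uses the time-integrated bound $\int_0^T \mathcal{E}[v]\,ds\le \beta_4$ from Lemma~\ref{Sect5_Decay_Lemma}, not the weighted $\tau v$ bound), so that $\|u\cdot\nabla u\|_{H^2}\lesssim \|u\|_{H^3}^2 = O(1)$; this does legitimize your $C\tau^2$ right-hand side. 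The other is what the paper actually does: one must first decompose $u\cdot\nabla u = u\cdot\nabla w - u\cdot\nabla(\tfrac{1}{\varrho}\nabla p)$, move the transport term $u\cdot\nabla w$ to the left-hand side (this is the $k_1 v\cdot\nabla\eta$ term in \eqref{Sect2_Wave_Eq}), and absorb its contribution $\sim |\nabla\cdot u|_\infty\|w\|^2 = O(\tau^{-1})\|w\|^2$ into the $\tau^{-2}\|w\|^2$ dissipation for $\tau$ small; the remaining forcing, after the $2\tau^2$ prefactor from Young, is bounded by $\mathcal{E}[\xi,\tau v,\phi,\zeta]$ using only the $\tau$-weighted norms, giving $\le C$ on the right, which still produces $+C\tau^2$ after Gronwall. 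Your closing paragraph gestures at this absorption argument, but it cannot be applied to the full $u\cdot\nabla u$; it requires the decomposition into $u\cdot\nabla w$ plus a genuinely bounded remainder before testing.

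A minor additional point: the paper also records $\int_0^T\|\eta_t\|_{H^1}^2\,ds\lesssim\tau^2$ from \eqref{Sect2_Eta_Velocity} and applies Aubin's lemma to upgrade to convergence in $C([t_\ast,T],C^{0+\mu_1}(\mathcal{K})\cap W^{1,\mu_2}(\mathcal{K}))$. Your argument obtains the $L^\infty_t$ bound directly and uses the spatial Sobolev embedding alone, which does deliver the stated uniform-in-time convergence since the functions involved are continuous in $t$; this part is fine, just a streamlining of the paper's step.
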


\begin{remark}
(i) Compare $(\ref{Sect1_Strong_Convergence_Result1})$ with $(\ref{Sect1_Strong_Convergence_Result2})$. In $(\ref{Sect1_Strong_Convergence_Result1})$, $(0,T]$ can not be replaced by $[0,T]$. That is, the convergence is not uniform near $t=0$, since the initial layer develops for the ill-prepared data.

(ii) For any fixed $t_{\ast}\in (0,T)$,
\begin{equation}\label{Sect1_Strong_Convergence_Result3}
\begin{array}{ll}
u \rto \tilde{u}\quad in\ C([t_{\ast}, T],\ C^{0+\mu_1}(\mathcal{K})\cap W^{1,\mu_2}(\mathcal{K})),
\mu_1\in (0,\frac{1}{2}),2\leq \mu_2<6,
\end{array}
\end{equation}
while $(\ref{Sect1_Strong_Convergence_Result3})$ is equivalent to $(\ref{Sect1_Strong_Convergence_Result1})$ due to the arbitrariness of $t_{\ast}$.

(iii) If one replaces the initial data $(p_0(x,\tau),u_0(x,\tau),S_0(x,\tau),\varrho_0(x,\tau))$ with $(p_0(x),u_0(x),S_0(x),\varrho_0(x))$ which are independent of $\tau$, then for the well-prepared data,
$p_0(x)\equiv const$ and $u_0(x)\equiv 0$ (equilibrium states); while for the ill-prepared data, $p_0(x)\neq const$ or $u_0(x)\neq 0$ (non-equilibrium states).
\end{remark}

In the following, we give more comments on Theorem \ref{Sect1_Thm1} and Theorem \ref{Sect1_Thm2}. If $\tau>0$ is fixed, the global existence of classical solutions to the equations $(\ref{Sect1_NonIsentropic_EulerEq})$
is proved in \cite{Wu_2014,Wu_Tan_Huang_2013,Zhang_Wu_2014}. However, for the relaxation limit problem in this paper,
$\tau>0$ is variant and approaches $0$, so we need the uniform existence of the solutions and uniform regularities $(\ref{Sect1_Uniform_Regularity})$ which are different from \cite{Wu_2014,Wu_Tan_Huang_2013,Zhang_Wu_2014}.
The uniform a priori estimates with respect to $\tau$ produce the convergence results. To make sure that the solutions of the relaxed equations remain classical and the estimates related to the initial layer can be proceeded, $(p_0(x,\tau),u_0(x,\tau),S_0(x,\tau)$ are required to be in $H^4(\mathbb{R}^3)$.

The uniform a priori estimates for the equations $(\ref{Sect1_Relaxing_Eq})$ imply the uniform regularities $(\ref{Sect1_Uniform_Regularity})$. Passing to the limit, we have the convergence results $(\ref{Sect1_Convergence_Result})$.

Let us give some comments and remarks on the initial layer as follows. The asymptotic expansions of the solutions to $(\ref{Sect1_Relaxing_Eq})$ give us some indication of the initial layer.  We illustrate this as follows: if $u_0(x,\tau)=O(\tau)$, we assume that the initial data have asymptotic expansion $$(p_0(x,\tau),\frac{u_0(x,\tau)}{\tau},S_0(x,\tau),\varrho_0(x,\tau))
=\sum\limits_{m\geq 0} \tau^{2m}(p_0^m, u_0^m, S_0^m, \varrho_0^m),$$
and solutions of the equations $(\ref{Sect1_Relaxing_Eq})$ have the asymptotic expansion
$$(p(x,t),u(x,t),S(x,t),\varrho(x,t))
=\sum\limits_{m\geq 0} \tau^{2m}(p^m(x,t), u^m(x,t), S^m(x,t), \varrho^m(x,t)),$$
 then the leading order profiles satisfy the equations
\begin{equation}\label{Sect2_Profile1}
\left\{\begin{array}{ll}
\partial_t p^0 + u^0 \cdot\nabla p^0 + \gamma p^0\nabla\cdot u^0 =0, \\[6pt]
u^0 + \frac{1}{\varrho_0}\nabla p^0 =0, \\[6pt]
\partial_t S^0 + u^0\cdot\nabla S^0 =0, \\[6pt]
(p^0, S^0)(x,0) = (p_0^0,S_0^0),
\end{array}\right.
\end{equation}
if the initial velocity is well-prepared, i.e.,  $u_0^0 = -\frac{1}{\varrho_0^0}\nabla p_0^0,$
where $\varrho^0 = \varrho(p^0,S^0)$. Note that in this case,  $u^0 + \frac{1}{\varrho^0}\nabla p^0 =0$ matches
$u_0^0 + \frac{1}{\varrho_0^0}\nabla p_0^0 =0$.

However, if the initial velocity is ill-prepared, i. e.,  $u_0^0 \neq -\frac{1}{\varrho_0^0}\nabla p_0^0$, we may assume that
the velocity has the asymptotic expansion
$$u(x,t)=\sum\limits_{m\geq 0}
( u^m(x,t)+\hat{u}^m(x,z)),\ z=\frac{t}{\tau^2},$$
then the leading order profile of the initial layer correction $\hat{u}$ satisfies the equation
\begin{equation}\label{Sect2_Profile2}
\left\{\begin{array}{ll}
\partial_z\hat{u}^0 + \hat{u}^0 =0, \\[6pt]
\hat{u}^0(x,0) = u_0^0 +\frac{1}{\varrho_0^0}\nabla p_0^0.
\end{array}\right.
\end{equation}
Then $\hat{u}(x,z)= (u_0^0 +\frac{1}{\varrho_0^0}\nabla p_0^0) e^{-z}.$
Thus, the difference between $u^0$ and $-\frac{1}{\varrho^0}\nabla p^0$ decays exponentially within the initial layer.

The above arguments of $(\ref{Sect2_Profile1})$ and $(\ref{Sect2_Profile2})$ indicate the relationship between the existence of initial layer and a class of initial data on a  formal level. We are not concerned with the  asymptotic expansions in this paper (as to asymptotic expansion analysis in relaxation limit problem for Euler-Poisson equations, see \cite{Hajjej_Peng_2013,Wang_Xu_2014,Li_2009}; for Euler-Maxwell equations, see \cite{Peng_Wang_Gu_2011,Hajjej_Peng_2012}) and only focus on rigorous analysis of the initial layer and relaxation limit of the relaxing equations $(\ref{Sect1_Relaxing_Eq})$.

The relaxation limit is a singular limit, since $(p, -\frac{1}{\varrho}\nabla p,S,\varrho)$ converge to $(\tilde{p},\tilde{u},\tilde{S},\tilde{\varrho})$, instead of $u\rto \tilde{u}$. In order to measure the difference between $u$ and $-\frac{1}{\varrho}\nabla p$, we introduce a quantity:
\begin{equation}\label{Sect1_Relaxation_Wave}
\begin{array}{ll}
\eta = \frac{1}{k_1}(u+\frac{1}{\varrho}\nabla p),
\end{array}
\end{equation}
where $k_1>0$ is a constant defined in Section 2.

The pointwise decay of $\eta$ outside the initial layer is the key to the strong convergence of the velocity. $\eta$ satisfies the following transportation equations with damping and forcing
\begin{equation}\label{Sect1_Wave_Eq}
\eta_t + u\cdot\nabla\eta + \frac{1}{\tau^2}\eta = f{\!}orcing\ terms,
\end{equation}
where $\tau\cdot[f\!orcing\ terms]$ is bounded uniform with respect to $\tau$, the damping effect becomes stronger as $\tau$ decreases.

Also, $\eta$ satisfies another equation:
\begin{equation}\label{Sect1_Eta_Velocity}
\begin{array}{ll}
\eta = \frac{\tau^2}{k_1}(u_t + u\cdot\nabla u).
\end{array}
\end{equation}

Then the equations $(\ref{Sect1_Wave_Eq})$ and $(\ref{Sect1_Eta_Velocity})$ produce the following estimates respectively:
\begin{equation}\label{Sect1_Wave_Estimate}
\left\{\begin{array}{ll}
|\eta|_{\infty}^2\leq C\|\eta\|_{H^2(\mathbb{R}^3)}^2\leq C\|\eta|_{t=0}\|_{H^2(\mathbb{R}^3)}^2\exp\{-\frac{t}{\tau^2}\}+C\tau^2, \\[7pt]

\int\limits_0^T \|\eta_t\|_{H^1(\mathbb{R}^3)}^2 \,\mathrm{d}x \leq C\tau^2.
\end{array}\right.
\end{equation}

Therefore, for the well-prepared data, $\lim\limits_{\tau\rto 0}\eta(t)=0$ for any $t\in [0,T]$, there is no discrepancy between $\eta(t)|_{t>0}$ and $\eta|_{t=0}$, and thus there is no initial layer,  $u\rto \tilde{u}$ strongly in
$C([0,T],C^{0+\mu_1}(\mathcal{K})\cap W^{1,\mu_2}(\mathcal{K})),
\mu_1\in (0,\frac{1}{2}),2\leq \mu_2<6.$

While for the ill-prepared data, $\lim\limits_{\tau\rto 0} \eta|_{t=0}\neq 0$ while
$\lim\limits_{\tau\rto 0} \eta(t)=0$ for any $t\in [t^{\ast},T]$ where $t^{\ast}= C\tau^{2-\delta}$.
The discrepancy between $\eta|_{t=0}$ and $\eta(t)|_{t=t^{\ast}}$ make the initial layer exist. Within the layer $[0,t^{\ast}]$, $\eta$ decreases rapidly, $\frac{1}{\varrho}\nabla p$ is uniform bounded, then $u$ changes dramatically. Outside the layer, $u$ converges strongly to $\tilde{u}$.
Namely, as $\tau\rto 0$, $u\rto \tilde{u}$ strongly in
$C((0,T],C^{0+\mu_1}(\mathcal{K})\cap W^{1,\mu_2}(\mathcal{K})),
\mu_1\in (0,\frac{1}{2}),2\leq \mu_2<6.$

Finally, the main results of this paper can be extended to periodic domains. Due to the convenience of periodic boundary conditions, a priori estimates for $\mathbb{T}^3$ are similar to those for $\mathbb{R}^3$, the results for $\mathbb{T}^3$ are stated as follows:

Suppose that the initial data for $(\ref{Sect1_Relaxing_Eq})$ satisfy $(p_0(x,\tau),u_0(x,\tau),S_0(x,\tau))\in H^4(\mathbb{T}^3)$ and other similar conditions given in Theorem $\ref{Sect1_Thm1}$. Then for any finite $T>0$,  the problem $(\ref{Sect1_Relaxing_Eq})$ admits a unique solution $(p,u,S,\varrho)$ in $[0 , T]$
such that as $\tau\rto 0$,
\begin{equation*}
\begin{array}{ll}
(p,S,\varrho) \rto (\tilde{p},\tilde{S},\tilde{\varrho})\ in\ C([0,T],C^{2+\mu_1}(\mathbb{T}^3)\cap W^{3,\mu_2}(\mathbb{T}^3)),
\mu_1\in (0,\frac{1}{2}),2\leq \mu_2<6, \\[6pt]
u \rightharpoonup \tilde{u}\ in\ \underset{0\leq \ell\leq 2}{\cap}H^\ell([0,T],H^{4-\ell}(\mathbb{T}^3)),
\end{array}
\end{equation*}
for some function $(\tilde{p}, \tilde{S}, \tilde{\varrho}, \tilde{u})$ which is a weak solution to $(\ref{Sect1_Relaxed_Eq})$ with the data $(\lim\limits_{\tau\rto 0}p_0(x,\tau)-\bar{p},
\lim\limits_{\tau\rto 0}S_0(x,\tau)-\bar{S})$. $(\tilde{p}, \tilde{S}, \tilde{\varrho}, \tilde{u})$ is the classical solution to $(\ref{Sect1_Relaxed_Eq})$, if the data is in $H^4(\mathbb{T}^3)\times H^3(\mathbb{T}^3)$.

For the ill-prepared data, there exists an initial layer for the velocity $u$, such that as $\tau\rto 0$,
\begin{equation*}
\begin{array}{ll}
u \rto \tilde{u}\quad in\ C((0, T],\ C^{0+\mu_1}(\mathbb{T}^3)\cap W^{1,\mu_2}(\mathbb{T}^3)),
\mu_1\in (0,\frac{1}{2}),2\leq \mu_2<6;
\end{array}
\end{equation*}
for the well-prepared data, as $\tau\rto 0$,
\begin{equation*}
\begin{array}{ll}
u \rto \tilde{u}\quad in\ C([0,T],C^{0+\mu_1}(\mathbb{T}^3)\cap W^{1,\mu_2}(\mathbb{T}^3)),
\mu_1\in (0,\frac{1}{2}),2\leq \mu_2<6.
\end{array}
\end{equation*}

\vspace{0.4cm}
The rest of this paper is organized as follows: In Section 2, we reformulate the equations into appropriate forms and derive the equations of $\eta$. In Section 3, we prove a priori estimates in $[0,T]$ for the relaxed equations. In Section 4, we prove the existence in $[0,T]$ of classical solutions to the relaxed equations. In Section 5, we prove uniform a priori estimates for the relaxing equations. In Section 6, we discuss the relaxation limit and initial layer of the relaxing equations. In Section 7, we extend the main results of this paper to periodic domains.

\vspace{0.4cm}
\section{Preliminaries}
In this section, we will reformulate the equations $(\ref{Sect1_Relaxing_Eq})$ into appropriate forms, define the energy quantities and derive the equations of the quantity $\eta$.

For the relaxing equations $(\ref{Sect1_Relaxing_Eq})$ together with their initial data $(p_0,u_0,S_0,\varrho_0)$ and constants $\bar{p},\bar{S},\bar{\varrho}$, we introduce the constants:
\begin{equation*}
\begin{array}{ll}
k_1=\sqrt{\frac{1}{\gamma\bar{\varrho}\bar{p}}},\quad k_2 = \sqrt{\frac{\gamma\bar{p}}{\bar{\varrho}}},
\end{array}
\end{equation*}
define the variables:
\begin{equation*}
\begin{array}{ll}
\xi=p-\bar{p},\ \phi=S-\bar{S},\ \zeta=\varrho-\bar{\varrho},\ v=\frac{1}{k_1}u.
\end{array}
\end{equation*}

In order to have the solutions of the relaxing equations $(\ref{Sect1_Relaxing_Eq})$ in any finite time interval $[0,T]$ and obtain uniform a priori estimates, we reformulate the equations $(\ref{Sect1_Relaxing_Eq})$ into the following form:
\begin{equation}\label{Sect2_Relaxing_Eq}
\left\{\begin{array}{ll}
\xi_t + k_2\nabla\cdot v = -\gamma k_1\xi\nabla\cdot v - k_1 v\cdot\nabla \xi, \\[6pt]
\tau^2 v_t + k_2 \nabla\xi + v = - k_1 \tau^2 v\cdot\nabla v
+ \frac{1}{k_1} (\frac{1}{\bar{\varrho}}- \frac{1}{\varrho})\nabla\xi, \\[6pt]
\phi_t = - k_1 v\cdot\nabla\phi, \\[6pt]
(\xi, v, \phi)(x,0)=(p_0(x,\tau)-\bar{p}, \frac{1}{k_1\tau} u_0(x,\tau), S_0(x,\tau)-\bar{S}), \\[6pt]
\end{array}\right.
\end{equation}
where $\varrho = \zeta+\bar{\varrho} =\varrho(\xi+\bar{p},\phi+\bar{S})$ and
$\zeta$ satisfies the equation $\zeta_t + k_1v\cdot\nabla\zeta + k_1\varrho\nabla\cdot v=0$ by $(\ref{Sect2_Relaxing_Eq})$. $(\xi,v,\phi,\zeta)\rto (0,0,0,0)$ as $|x|\rto +\infty$.

Let $\tau=0$ in the relaxing equations $(\ref{Sect2_Relaxing_Eq})$, we formally obtained the following relaxed equations, which are equivalent to the relaxed equations $(\ref{Sect1_Relaxed_Eq})$.
\begin{equation}\label{Sect2_Relaxed_Eq}
\left\{\begin{array}{ll}
\xi_t + k_2\nabla\cdot v = -\gamma k_1\xi\nabla\cdot v - k_1 v\cdot\nabla \xi, \\[6pt]
k_2 \nabla\xi + v = \frac{1}{k_1} (\frac{1}{\bar{\varrho}}- \frac{1}{\varrho})\nabla\xi, \\[6pt]
\phi_t = - k_1 v\cdot\nabla\phi, \\[6pt]
(\xi,\phi)(x,0)=(\lim\limits_{\tau\rto 0}p_0(x,\tau)-\bar{p}, \lim\limits_{\tau\rto 0}S_0(x,\tau)-\bar{S}),
\end{array}\right.
\end{equation}
where $\varrho =\zeta +\bar{\varrho} =\varrho(\xi+\bar{p},\phi+\bar{S})$.

The global existence of classical solutions to IBVP $(\ref{Sect2_Relaxed_Eq})$ with boundary condition $v\cdot n|_{\partial\Omega}=0$ has been proved in \cite{Wu_2014}. However, in this paper, we prove the existence in any finite time interval $[0,T]$ of classical solutions to Cauchy problem $(\ref{Sect2_Relaxed_Eq})$ with small data
in $H^5(\mathbb{R}^3)\times H^4(\mathbb{R}^3)$. Note that the regularity index for $\mathbb{R}^3$ is one order higher than that for periodic or bounded domains.

In order to prove the existence in any finite time interval $[0,T]$ of classical solutions to $(\ref{Sect2_Relaxed_Eq})$ via energy method, we define the following energy quantities:
\begin{definition}\label{Sect2_Def_Energy_RelaxedEq} Define
\begin{equation}\label{Sect2_Energy_Define_RelaxedEq}
\begin{array}{ll}
\mathcal{F}[\xi](t):=
\sum\limits_{0\leq|\alpha|\leq 4} \|\mathcal{D}^{\alpha}\xi(t)\|_{L^2(\mathbb{R}^3)}^2
+ \sum\limits_{0\leq|\alpha|\leq 2} \|\mathcal{D}^{\alpha}\partial_t\xi(t)\|_{L^2(\mathbb{R}^3)}^2,
\\[12pt]

\tilde{\mathcal{F}}[\xi](t):=
\sum\limits_{0\leq|\alpha|\leq 5} \|\mathcal{D}^{\alpha}\xi(t)\|_{L^2(\mathbb{R}^3)}^2
+ \sum\limits_{0\leq|\alpha|\leq 3} \|\mathcal{D}^{\alpha}\partial_t\xi(t)\|_{L^2(\mathbb{R}^3)}^2,
\\[12pt]

\mathcal{F}_X[\xi](t):=
\sum\limits_{1\leq|\alpha|\leq 4} \|\mathcal{D}^{\alpha}\xi(t)\|_{L^2(\mathbb{R}^3)}^2
+ \sum\limits_{0\leq|\alpha|\leq 2} \|\mathcal{D}^{\alpha}\partial_t\xi(t)\|_{L^2(\mathbb{R}^3)}^2,
\\[15pt]

\mathcal{F}_1[\xi](t) = \sum\limits_{0\leq|\alpha|\leq 4} \|\mathcal{D}^{\alpha}\xi(t)\|_{L^2(\mathbb{R}^3)}^2
-\sum\limits_{|\alpha|=4}\
\int\limits_{\mathbb{R}^3}\frac{\xi}{p} (\mathcal{D}^{\alpha}\xi)^2\,\mathrm{d}x
\\[12pt]\hspace{1.8cm}
+ \sum\limits_{0\leq|\alpha|\leq 2} \|\mathcal{D}^{\alpha}\partial_t\xi(t)\|_{L^2(\mathbb{R}^3)}^2 
-\sum\limits_{|\alpha|=2}\
\int\limits_{\mathbb{R}^3}\frac{\xi}{p} (\mathcal{D}^{\alpha}\partial_t\xi)^2\,\mathrm{d}x, \\[15pt]

\mathcal{F}[v](t) = \sum\limits_{0\leq|\alpha|\leq 4} \|\mathcal{D}^{\alpha}v(t)\|_{L^2(\mathbb{R}^3)}^2
+ \sum\limits_{0\leq|\alpha|\leq 2} \|\mathcal{D}^{\alpha}\partial_t v(t)\|_{L^2(\mathbb{R}^3)}^2,
\\[13pt]

\mathcal{F}[\phi](t):=
\sum\limits_{0\leq|\alpha|\leq 4} \|\mathcal{D}^{\alpha}\phi(t)\|_{L^2(\mathbb{R}^3)}^2
+ \sum\limits_{0\leq|\alpha|\leq 2} \|\mathcal{D}^{\alpha}\partial_t\phi(t)\|_{L^2(\mathbb{R}^3)}^2,\\[12pt]
\mathcal{F}[\zeta](t):=
\sum\limits_{0\leq|\alpha|\leq 4} \|\mathcal{D}^{\alpha}\zeta(t)\|_{L^2(\mathbb{R}^3)}^2
+ \sum\limits_{0\leq|\alpha|\leq 2} \|\mathcal{D}^{\alpha}\partial_t\zeta(t)\|_{L^2(\mathbb{R}^3)}^2,\\[12pt]

\mathcal{F}[\xi,v,\phi,\zeta](t):=\mathcal{F}[\xi](t) +\mathcal{F}[v](t)
 + \mathcal{F}[\phi](t) + \mathcal{F}[\zeta](t).
\end{array}
\end{equation}
\end{definition}

Moreover, we can derive the evolution equations of $v$ from $(\ref{Sect2_Relaxed_Eq})$, which is useful for proving a priori estimate for the $L^{\infty}$ bound of $\mathcal{F}[v](t)$.
Apply $\partial_{i}$ to $(\ref{Sect2_Relaxed_Eq})_1$ and then substitute $\partial_i \xi$ for $- k_1\varrho v_i$, we have
\begin{equation}\label{Sect2_Velocity_Solve}
\begin{array}{ll}
v_t = k_1(1-\gamma)v(\nabla\cdot v) - k_1 v\cdot\nabla v
- \frac{k_1}{2} \nabla(|v|^2) + \frac{\gamma p}{\varrho}\nabla(\nabla\cdot v).
\end{array}
\end{equation}

To prove the uniform regularities $(\ref{Sect1_Uniform_Regularity})$ is equivalent to prove the following uniform regularities:
\begin{equation}\label{Sect2_Solution_Regularity}
\begin{array}{ll}
\partial_t^{\ell}\xi,\ \tau \partial_t^{\ell}v,\ \partial_t^{\ell}\phi,\ \partial_t^{\ell}\zeta
\in L^{\infty}([0,T],H^{4-\ell}
(\mathbb{R}^3)), 0\leq\ell\leq 2, \\[6pt]

\xi,\ v,\ \phi,\ \zeta \in \underset{0\leq \ell\leq 2}{\cap}H^\ell([0,T],H^{4-\ell}(\mathbb{R}^3)).
\end{array}
\end{equation}

In order to use the energy method to derive uniform a priori estimates with respect to $\tau$, we define the following energy quantities:
\begin{definition}\label{Sect2_Def_Energy} Define
\begin{equation}\label{Sect2_Energy_Define}
\begin{array}{ll}
\mathcal{E}[\xi](t):=
\sum\limits_{0\leq\ell\leq 2,0\leq \ell+|\alpha|\leq 4} \|\partial_t^{\ell} \mathcal{D}^{\alpha}\xi(t)\|_{L^2(\mathbb{R}^3)}^2,\\[12pt]
\mathcal{E}_X[\xi](t)=\sum\limits_{0\leq\ell\leq 2,0<\ell+|\alpha|\leq 4}
\|\partial_t^{\ell}\mathcal{D}^{\alpha}\xi\|_{L^2(\mathbb{R}^3)}^2,\\[12pt]
\mathcal{E}_1[\xi](t) = \mathcal{E}[\xi](t)
-\sum\limits_{0\leq\ell\leq 2,\ell+|\alpha|=4}\
\int\limits_{\mathbb{R}^3}\frac{\xi}{p} (\partial_t^{\ell}\mathcal{D}^{\alpha}\xi)^2\,\mathrm{d}x, \\[15pt]

\mathcal{E}[v](t):=
\sum\limits_{0\leq\ell\leq 2,0\leq \ell+|\alpha|\leq 4} \|\partial_t^{\ell} \mathcal{D}^{\alpha} v(t)\|_{L^2(\mathbb{R}^3)}^2, \\[13pt]
\mathcal{E}_1[v](t) = \mathcal{E}[v](t)
+\sum\limits_{0\leq\ell\leq 2,\ell+|\alpha|=4}\
\int\limits_{\mathbb{R}^3}(\frac{\varrho}{\bar{\varrho}}- 1)|\partial_t^{\ell}\mathcal{D}^{\alpha}v|^2 \,\mathrm{d}x,
\\[15pt]

\mathcal{E}[\phi](t):=
\sum\limits_{0\leq\ell\leq 2,0\leq \ell+|\alpha|\leq 4} \|\partial_t^{\ell} \mathcal{D}^{\alpha}\phi(t)\|_{L^2(\mathbb{R}^3)}^2,\\[13pt]
\mathcal{E}[\zeta](t):=
\sum\limits_{0\leq\ell\leq 2,0\leq \ell+|\alpha|\leq 4} \|\partial_t^{\ell} \mathcal{D}^{\alpha}\zeta(t)\|_{L^2(\mathbb{R}^3)}^2,\\[12pt]
\mathcal{E}[\xi,v](t):=\mathcal{E}[\xi](t) + \mathcal{E}[v](t),\\[6pt]
\mathcal{E}[\xi,v,\phi,\zeta](t):=\mathcal{E}[\xi,v](t) + \mathcal{E}[\phi](t) + \mathcal{E}[\zeta](t).
\end{array}
\end{equation}
\end{definition}

To prove the convergence results $(\ref{Sect1_Convergence_Result})$ is equivalent to prove the following convergence results: as $\tau\rto 0$,
\begin{equation}\label{Sect2_Convergence_Result}
\begin{array}{ll}
\xi \rto \tilde{\xi}\quad in\ C([0,T],C^{2+\mu_1}(\mathcal{K})\cap W^{3,\mu_2}(\mathcal{K})),
\mu_1\in (0,\frac{1}{2}),2\leq \mu_2<6, \\[6pt]
\phi \rto \tilde{\phi}\quad in\ C([0,T],C^{2+\mu_1}(\mathcal{K})\cap W^{3,\mu_2}(\mathcal{K})),
\mu_1\in (0,\frac{1}{2}),2\leq \mu_2<6, \\[6pt]
\zeta \rto \tilde{\zeta} \quad in\ C([0,T],C^{2+\mu_1}(\mathcal{K})\cap W^{3,\mu_2}(\mathcal{K})),
\mu_1\in (0,\frac{1}{2}),2\leq \mu_2<6, \\[6pt]
v \rightharpoonup \tilde{v} \quad in\ \underset{0\leq \ell\leq 2}{\cap}H^\ell([0,T],H^{4-\ell}(\mathbb{R}^3)),
\end{array}
\end{equation}
where $\mathcal{K}$ denotes any compact subset of $\mathbb{R}^3$.

For the ill-prepared data, $\lim\limits_{\tau\rto 0}\eta|_{t=0}\neq 0$, while $\lim\limits_{\tau\rto 0}\eta(t)|_{t\geq t_{\ast}}= 0$ for any small $t_{\ast}>0$. This discrepancy between $\eta|_{t=0}$ and $\eta(t)|_{t=t_{\ast}}$ makes the initial layer for the velocity generate.

In order to prove the strong convergence outside the initial layer, we need the pointwise decay of the quantity:
\begin{equation}\label{Sect2_Relaxation_Wave_Define}
\begin{array}{ll}
\eta(x,t) = v(x,t) + \frac{1}{k_1\varrho(x,t)}\nabla\xi(x,t).
\end{array}
\end{equation}

Differentiate $(\ref{Sect2_Relaxation_Wave_Define})$ wit respect to $t$, we have
\begin{equation*}
\begin{array}{ll}
\eta_t = v_t + \frac{1}{k_1\varrho}\nabla\xi_t - \frac{\zeta_t}{k_1\varrho^2}\nabla\xi,
\end{array}
\end{equation*}
then $\eta$ satisfies the following transportation equation with damping and  forcing:
\begin{equation}\label{Sect2_Wave_Eq}
\begin{array}{ll}
\eta_t + k_1 v\cdot\nabla\eta + \frac{1}{\tau^2}\eta
= v\cdot\nabla(\frac{1}{\varrho})\nabla\xi
- \frac{1}{\varrho}(\nabla v) \nabla\xi - \frac{\gamma}{\varrho}\nabla \xi \nabla\cdot v \\[9pt]\hspace{3.4cm}
- \frac{\gamma}{\varrho}p\nabla(\nabla\cdot v)
+\frac{1}{\varrho^2}(v\cdot\nabla\zeta + \varrho\nabla\cdot v)\nabla\xi,
\end{array}
\end{equation}
where $\tau\cdot[f\!orcing\ terms]$ is bounded uniform with respect to $\tau$, $(\nabla v)$ is a matrix, the damping effect for $\eta$ becomes stronger as $\tau$ decreases.

The equation $(\ref{Sect2_Wave_Eq})$ produces the following estimate:
\begin{equation}\label{Sect2_Wave_Estimate1}
\begin{array}{ll}
|\eta|_{\infty}^2\leq C\|\eta\|_{H^2(\mathbb{R}^3)}^2\leq C\|\eta|_{t=0}\|_{H^2(\mathbb{R}^3)}^2\exp\{-\frac{t}{\tau^2}\}+C\tau^2.
\end{array}
\end{equation}

Also, $\eta$ satisfies another equation:
\begin{equation}\label{Sect2_Eta_Velocity}
\begin{array}{ll}
\eta = \tau^2(v_t + k_1 v\cdot\nabla v),
\end{array}
\end{equation}
which produces the following estimate:
\begin{equation}\label{Sect2_Wave_Estimate2}
\begin{array}{ll}
\int\limits_0^T \|\eta_t\|_{H^1(\mathbb{R}^3)}^2 \,\mathrm{d}x \leq C\tau^2.
\end{array}
\end{equation}

Based on the estimates $(\ref{Sect2_Wave_Estimate1})$ and $(\ref{Sect2_Wave_Estimate2})$, we have the strong convergence of $v$ outside the initial layer.

In $\mathbb{R}^3$, we have Gagliado-Nirenberg type inequalities, which are useful in estimating the nonlinear terms.
\begin{equation}\label{Sect2_Sobolev_Ineq}
\left\{\begin{array}{ll}
\|\cdot\|_{L^6(\mathbb{R}^3)}\lem \|\cdot\|_{H^1(\mathbb{R}^3)}, \\[6pt]
\|\partial\nabla U\|_{L^4(\mathbb{R}^3)}^2 \lem |\partial U|_{\infty}\|\partial\nabla^2 U\|_{L^2(\mathbb{R}^3)}, \
(see\ \cite{Sideris_Thomases_Wang_2003}),
\end{array}\right.
\end{equation}
where $U$ is a vector or scalar function, $\partial=(\partial_t,\nabla)$.

In the sequent sections, we will use the following notations: $X\lem Y$ denotes the estimate $X\leq CY$ for some implied constant $C>0$ which may different line by line. $(\cdot)_{k}$ denotes a vector in $\mathbb{R}^3$, for instance, $\omega_{k}=\delta_{ijk}\partial_{i}v_j$, where $\delta_{ijk}$ is totally anti-symmetric tensor such that $\delta_{123}=\delta_{231}=\delta_{312}=1, \delta_{213}=\delta_{321}=\delta_{132}=-1$, others are $0$. 'R.H.S.' is the abbreviation for 'right hand side'.

\section{A Priori Estimates for the Relaxed Equations}
In this section, we derive a priori estimates in $[0,T]$ for the relaxed equations $(\ref{Sect2_Relaxed_Eq})$, where $T>0$ is finite and independent of $\tau$.

The following lemma concerns the estimates related to the density, the proof is omitted for its simplicity.
\begin{lemma}\label{Sect3_Varrho_Lemma}
For any given $T\in (0,+\infty)$, if
\begin{equation*}
\sup\limits_{0\leq t\leq T} \mathcal{F}[\xi,v,\phi,\zeta](t) \leq\ve,
\end{equation*}
where $0<\ve\ll 1$, then
\begin{equation}\label{Sect3_Varrho_Eq}
\begin{array}{ll}
\sup\limits_{0\leq t\leq T}|\zeta_t|_{\infty}\lem \sqrt{\ve},\
\sup\limits_{0\leq t\leq T}|\zeta|_{\infty}\lem \sqrt{\ve},\
\sup\limits_{0\leq t\leq T}|\nabla\zeta|_{\infty}\lem \sqrt{\ve}.
\end{array}
\end{equation}
\end{lemma}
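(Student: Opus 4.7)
The plan is short: the three claimed pointwise bounds follow immediately from the Sobolev embedding $H^2(\mathbb{R}^3)\hookrightarrow L^{\infty}(\mathbb{R}^3)$ applied directly to the components of $\zeta$ already controlled by $\mathcal{F}[\zeta]$, so the equation for $\zeta$ never needs to be invoked.

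First I would unpack the hypothesis. By Definition \ref{Sect2_Def_Energy_RelaxedEq}, the summand
\begin{equation*}
\mathcal{F}[\zeta](t)=\sum_{0\leq|\alpha|\leq 4}\|\mathcal{D}^{\alpha}\zeta(t)\|_{L^2(\mathbb{R}^3)}^2
+\sum_{0\leq|\alpha|\leq 2}\|\mathcal{D}^{\alpha}\partial_t\zeta(t)\|_{L^2(\mathbb{R}^3)}^2
\end{equation*}
is a piece of $\mathcal{F}[\xi,v,\phi,\zeta]$, so the standing assumption yields, uniformly for $t\in[0,T]$,
\begin{equation*}
\|\zeta(t)\|_{H^{4}(\mathbb{R}^3)}+\|\zeta_t(t)\|_{H^{2}(\mathbb{R}^3)}\lem \sqrt{\ve}.
\end{equation*}

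Next I would apply the embedding $H^{2}(\mathbb{R}^3)\hookrightarrow L^{\infty}(\mathbb{R}^3)$ (which is consistent with the Gagliardo--Nirenberg-type inequalities recorded in (\ref{Sect2_Sobolev_Ineq})). For the undifferentiated bound, $|\zeta(t)|_{\infty}\lem \|\zeta(t)\|_{H^2}\leq \|\zeta(t)\|_{H^4}\lem \sqrt{\ve}$. For the gradient, spending one extra derivative, $|\nabla\zeta(t)|_{\infty}\lem \|\nabla\zeta(t)\|_{H^2}\leq \|\zeta(t)\|_{H^3}\lem \sqrt{\ve}$. For the time derivative, $|\zeta_t(t)|_{\infty}\lem \|\zeta_t(t)\|_{H^2}\lem \sqrt{\ve}$. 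Taking the supremum over $[0,T]$ delivers the three inequalities of (\ref{Sect3_Varrho_Eq}).

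There is no real obstacle here; the only conceptual point is that the definition of $\mathcal{F}[\zeta]$ was chosen with exactly one derivative of slack beyond what Sobolev embedding demands for pointwise control of $\nabla\zeta$, which explains why $H^{4}$ (and not merely $H^{3}$) appears for $\zeta$ in Definition \ref{Sect2_Def_Energy_RelaxedEq}. Neither the constitutive identity $\zeta=\varrho(\xi+\bar{p},\phi+\bar{S})-\bar{\varrho}$ nor the transport equation $\zeta_t+k_1 v\cdot\nabla\zeta+k_1\varrho\nabla\cdot v=0$ plays a role at this stage; those will only be needed later, when one closes the a priori bound on $\mathcal{F}[\zeta]$ itself.
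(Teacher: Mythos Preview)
Your proof is correct and is precisely the intended argument: the paper itself omits the proof ``for its simplicity,'' and the simple argument is exactly the Sobolev embedding $H^2(\mathbb{R}^3)\hookrightarrow L^\infty(\mathbb{R}^3)$ applied to $\zeta$, $\nabla\zeta$, and $\zeta_t$, each of which is controlled in $H^2$ by $\mathcal{F}[\zeta]\leq\ve$.
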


The following lemma states that $\mathcal{F}[\xi](t)$ and $\mathcal{F}_1[\xi](t)$ are equivalent.
\begin{lemma}\label{Sect3_Epsilon0_Lemma}
For any given $T\in (0,+\infty)$, there exists $\ve_1>0$ which is independent of $(\xi_0, v_0,\phi_0)$, such that if $\sup\limits_{0\leq t\leq T} \mathcal{F}[\xi,v,\phi,\zeta](t) \leq\ve_1$, then
$|\xi|_{\infty}\leq \frac{\bar{p}}{3},|\zeta|_{\infty}\leq \frac{\bar{\varrho}}{2}$ and
there exist $c_1>0,c_2>0$ such that
\begin{equation}\label{Sect3_Energy_Equivalence}
\begin{array}{ll}
c_1 \mathcal{F}[\xi](t) \leq \mathcal{F}_1[\xi](t) \leq c_2 \mathcal{F}[\xi](t).
\end{array}
\end{equation}
\end{lemma}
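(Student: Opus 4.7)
The plan is to use the Sobolev embedding $H^2(\mathbb{R}^3)\hookrightarrow L^\infty(\mathbb{R}^3)$ to control the pointwise sizes of $\xi$ and $\zeta$ by the square root of the smallness parameter, and then to bound the correction terms in $\mathcal{F}_1[\xi](t)$ pointwise to obtain the desired equivalence. No PDE information is really used; the whole statement is an algebraic/inequality fact about the definitions of $\mathcal{F}[\xi]$ and $\mathcal{F}_1[\xi]$ once the base state $\bar p>0$ is preserved.

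First I would establish the $L^\infty$ bounds. Since in three space dimensions we have $\|f\|_{L^\infty(\mathbb{R}^3)}\lesssim \|f\|_{H^2(\mathbb{R}^3)}$, we get
\begin{equation*}
|\xi(t)|_\infty^2 \lesssim \|\xi(t)\|_{H^2(\mathbb{R}^3)}^2 \leq \mathcal{F}[\xi](t)\leq \mathcal{F}[\xi,v,\phi,\zeta](t),
\end{equation*}
and similarly $|\zeta(t)|_\infty^2 \lesssim \mathcal{F}[\xi,v,\phi,\zeta](t)$. Hence there is an absolute constant $C_0$ with $|\xi|_\infty + |\zeta|_\infty \leq C_0\sqrt{\varepsilon_1}$ on $[0,T]$. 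Choosing $\varepsilon_1$ small enough so that $C_0\sqrt{\varepsilon_1}\leq \min\{\bar p/3,\bar\varrho/2\}$ immediately gives $|\xi|_\infty\leq \bar p/3$ and $|\zeta|_\infty\leq \bar\varrho/2$. In particular $p=\xi+\bar p\geq 2\bar p/3>0$ throughout $[0,T]$.

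Next I would control the correction terms. With $|\xi|_\infty\leq \bar p/3$ and $p\geq 2\bar p/3$, we have the pointwise bound
\begin{equation*}
\left|\frac{\xi}{p}\right|\leq \frac{\bar p/3}{2\bar p/3}=\frac12.
\end{equation*}
Therefore for each multi-index $\alpha$ appearing in $\mathcal{F}_1[\xi]$,
\begin{equation*}
\left|\int_{\mathbb{R}^3}\frac{\xi}{p}(\mathcal{D}^\alpha\xi)^2\,\mathrm{d}x\right|\leq \tfrac12\|\mathcal{D}^\alpha\xi\|_{L^2(\mathbb{R}^3)}^2,
\end{equation*}
and similarly for the $|\alpha|=2$ time-derivative terms with $\partial_t\xi$ in place of $\xi$. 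Summing over the relevant multi-indices yields
\begin{equation*}
\left|\mathcal{F}_1[\xi](t)-\mathcal{F}[\xi](t)\right|\leq \tfrac12\mathcal{F}[\xi](t),
\end{equation*}
so that $\tfrac12\mathcal{F}[\xi](t)\leq \mathcal{F}_1[\xi](t)\leq \tfrac32\mathcal{F}[\xi](t)$, which is the claim with $c_1=\tfrac12$, $c_2=\tfrac32$.

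There is essentially no main obstacle; the only care needed is in shrinking $\varepsilon_1$ so that simultaneously $|\xi|_\infty\leq \bar p/3$ (which is what makes $p$ stay away from zero and gives the explicit bound $|\xi/p|\leq 1/2$) and $|\zeta|_\infty\leq \bar\varrho/2$. Note also that $\varepsilon_1$ is chosen depending only on the absolute Sobolev embedding constant $C_0$ and on $\bar p,\bar\varrho$, and is in particular independent of the initial data $(\xi_0,v_0,\phi_0)$ and of $T$, as required by the statement.
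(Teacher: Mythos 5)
Your proposal is correct and is exactly the short argument the paper has in mind when it omits the proof: Sobolev embedding $H^2(\mathbb{R}^3)\hookrightarrow L^\infty$ gives $|\xi|_\infty+|\zeta|_\infty\lesssim\sqrt{\mathcal{F}[\xi,v,\phi,\zeta]}$, shrinking $\ve_1$ then forces $p\ge 2\bar p/3$ and hence $|\xi/p|\le 1/2$, and the correction integrals in $\mathcal{F}_1[\xi]$ are thereby bounded by half the corresponding top-order $L^2$ terms, yielding the equivalence with $c_1=1/2$, $c_2=3/2$. No difference in approach to report.
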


To make calculations simpler, we calculate $\frac{\mathrm{d}}{\mathrm{d}t}\mathcal{F}_1[\xi](t)$ and $\frac{\mathrm{d}}{\mathrm{d}t}\mathcal{F}[v](t)$ separately.

The following lemma concerns the estimate for $L^{\infty}$ bound of $\mathcal{F}_1[\xi](t)$.
\begin{lemma}\label{Sect3_Energy_Estimate_Lemma2}
For any given $T\in (0,+\infty)$, if
\begin{equation*}
\sup\limits_{0\leq t\leq T} \mathcal{F}[\xi,v,\phi,\zeta](t) \leq\ve,
\end{equation*}
where $0<\ve\ll 1$, then for $\forall t\in [0,T]$,
\begin{equation}\label{Sect3_Estimate1_toProve}
\frac{\mathrm{d}}{\mathrm{d}t}\mathcal{F}_1[\xi](t)
+ 2 \mathcal{F}[v](t) \leq C\sqrt{\ve}(\mathcal{F}_X[\xi](t) + \mathcal{F}[v](t)).
\end{equation}
\end{lemma}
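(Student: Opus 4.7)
The plan is a high-order energy estimate tailored to the parabolic-type relaxed system (\ref{Sect2_Relaxed_Eq}). For each pair $(\ell,\alpha)$ with $0\le\ell\le 2$ and $0\le\ell+|\alpha|\le 4$, one applies $\partial_t^{\ell}\mathcal{D}^{\alpha}$ to the first equation in (\ref{Sect2_Relaxed_Eq}), multiplies the resulting identity by $2\partial_t^{\ell}\mathcal{D}^{\alpha}\xi$ at sub-top order and by $\frac{2\bar p}{p}\partial_t^{\ell}\mathcal{D}^{\alpha}\xi$ when $\ell+|\alpha|=4$, integrates over $\mathbb{R}^3$, and sums in $(\ell,\alpha)$. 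The weight $\bar p/p = 1-\xi/p$ at top order is exactly what produces $\mathcal{F}_1[\xi]$ (rather than $\mathcal{F}[\xi]$) under $\frac{d}{dt}$ after accounting for the ``commutator'' $\partial_t(\xi/p)$, which is of size $\sqrt{\ve}$ by Lemma \ref{Sect3_Varrho_Lemma}.

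The coercive term on the left is generated by the linear coupling $k_2\nabla\cdot v$. After one integration by parts it becomes, up to the weight, $-2k_2\int_{\mathbb{R}^3}\partial_t^{\ell}\mathcal{D}^{\alpha}v\cdot\nabla\partial_t^{\ell}\mathcal{D}^{\alpha}\xi\,\mathrm{d}x$; the algebraic constraint (\ref{Sect2_Relaxed_Eq})$_2$, rewritten as $k_2\nabla\xi=-v+\frac{1}{k_1}(\frac{1}{\bar{\varrho}}-\frac{1}{\varrho})\nabla\xi$ and differentiated by $\partial_t^{\ell}\mathcal{D}^{\alpha}$, allows one to replace $k_2\nabla\partial_t^{\ell}\mathcal{D}^{\alpha}\xi$ by $-\partial_t^{\ell}\mathcal{D}^{\alpha}v$ modulo a remainder of size $\sqrt{\ve}(\mathcal{F}_X[\xi]+\mathcal{F}[v])$ via Lemma \ref{Sect3_Varrho_Lemma}. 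This produces $+2\|\partial_t^{\ell}\mathcal{D}^{\alpha}v\|_{L^2(\mathbb{R}^3)}^2$ per term, which sums to $+2\mathcal{F}[v]$. The remaining contributions come from Leibniz-expanding $\partial_t^{\ell}\mathcal{D}^{\alpha}(\gamma k_1\xi\nabla\cdot v)$ and $\partial_t^{\ell}\mathcal{D}^{\alpha}(k_1 v\cdot\nabla\xi)$; every summand carries a ``small'' factor ($\xi$, $v$, or one of their derivatives controlled through $\mathcal{F}[\xi,v,\phi,\zeta]\le\ve$ and the Sobolev embedding $H^4(\mathbb{R}^3)\hookrightarrow C^2$), and is therefore bounded by $C\sqrt{\ve}(\mathcal{F}_X[\xi]+\mathcal{F}[v])$ after H\"older's inequality and (\ref{Sect2_Sobolev_Ineq}).

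The main obstacle is the careful book-keeping at top order $\ell+|\alpha|=4$. First, one must verify that, after using the momentum constraint to eliminate any spare $\nabla\partial_t^{\ell}\mathcal{D}^{\alpha}\xi$ (which would otherwise be one order above the highest spatial derivative controlled by $\mathcal{F}_X[\xi]$) in favor of $\partial_t^{\ell}\mathcal{D}^{\alpha}v$, the constants combine to give exactly $+2$ in front of $\mathcal{F}[v]$; this is precisely why the weight is $2\bar p/p$ rather than just $2$. Second, middle-order cross products like $\int|\partial^{3}v|\,|\partial\nabla\xi|\,|\partial\xi|\,\mathrm{d}x$ cannot be closed by a naive H\"older distribution without losing a derivative, so the $L^4$-type estimate $\|\partial\nabla U\|_{L^4(\mathbb{R}^3)}^2\lesssim|\partial U|_{\infty}\|\partial\nabla^2 U\|_{L^2(\mathbb{R}^3)}$ from (\ref{Sect2_Sobolev_Ineq}) is indispensable. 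Time-derivative entries ($\ell\ge 1$) do not require new ideas: once $\partial_t\xi$ and $\partial_t^2\xi$ are substituted from (\ref{Sect2_Relaxed_Eq})$_1$ in terms of $\xi$, $v$ and their spatial derivatives, the same scheme closes and yields (\ref{Sect3_Estimate1_toProve}).
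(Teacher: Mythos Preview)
Your proposal is correct and is, up to book-keeping, the same computation as the paper's. The only organizational difference is that the paper multiplies the \emph{pair} $(\ref{Sect2_Relaxed_Eq})_1$--$(\ref{Sect2_Relaxed_Eq})_2$ by $(\partial_t^{\ell}\mathcal{D}^{\alpha}\xi,\partial_t^{\ell}\mathcal{D}^{\alpha}v)$ and then, at top order, subtracts $\frac{d}{dt}\int\frac{\xi}{p}(\partial_t^{\ell}\mathcal{D}^{\alpha}\xi)^2\,\mathrm{d}x$ to absorb the dangerous quasilinear term $\xi\,\nabla\cdot\partial_t^{\ell}\mathcal{D}^{\alpha}v$; you instead multiply only $(\ref{Sect2_Relaxed_Eq})_1$ by the weight $\tfrac{2\bar p}{p}=2-\tfrac{2\xi}{p}$ and substitute the constraint after integrating by parts. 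Since $k_2=k_1\gamma\bar p$, your weight makes the combination $k_2+ \gamma k_1\xi=k_1\gamma p$ collapse to the constant $k_1\gamma\bar p$ after multiplication by $\bar p/p$, which is exactly the cancellation the paper achieves by the subtraction trick in $(\ref{Sect3_Estimate2_6})$--$(\ref{Sect3_Estimate2_8})$. The residual top-order term $(\frac{1}{\bar\varrho}-\frac{1}{\varrho})\nabla\partial_t^{\ell}\mathcal{D}^{\alpha}\xi$ is then handled identically in both approaches by a second use of the constraint, cf.\ $(\ref{Sect3_Estimate2_9})$--$(\ref{Sect3_Estimate2_10})$.

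Two small corrections. First, the index set for $\mathcal{F}_1[\xi]$ is $\ell\in\{0,1\}$ with $|\alpha|\le 4-2\ell$ (see Definition~\ref{Sect2_Def_Energy_RelaxedEq}), not $0\le\ell\le 2$, $\ell+|\alpha|\le 4$; the latter is the range for $\mathcal{E}$ in the relaxing problem. Second, the $L^4$--Gagliardo--Nirenberg inequality $(\ref{Sect2_Sobolev_Ineq})_2$ is not actually needed in this lemma: with $H^4$ control one can always place $L^\infty$ on a factor carrying at most two derivatives, so a naive H\"older distribution suffices for all commutator terms here (the paper reserves $(\ref{Sect2_Sobolev_Ineq})_2$ for Lemma~\ref{Sect3_Velocity_Lemma}).
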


\begin{proof}
Let $(\ref{Sect2_Relaxed_Eq})\cdot(\xi, v)$, we get
\begin{equation}\label{Sect3_ZeroOrder_1}
\begin{array}{ll}
(|\xi|^2)_t + 2k_2 \xi\nabla\cdot v + 2k_2 v\cdot\nabla\xi + 2 |v|^2 \\[6pt]
= -2\gamma k_1 \xi^2\nabla\cdot v - 2k_1 \xi v\cdot\nabla \xi
+ \frac{2}{k_1} (\frac{1}{\bar{\varrho}}- \frac{1}{\varrho})\nabla\xi\cdot v.
\end{array}
\end{equation}

Integrate $(\ref{Sect3_ZeroOrder_1})$ in $\mathbb{R}^3$ and note that $\int\limits_{\mathbb{R}^3}\nabla\cdot(\xi v) \,\mathrm{d}x = 0$, we get
\begin{equation}\label{Sect3_ZeroOrder_3}
\begin{array}{ll}
\frac{\mathrm{d}}{\mathrm{d} t}
\int\limits_{\mathbb{R}^3}|\xi|^2 \,\mathrm{d}x + 2\int\limits_{\mathbb{R}^3}|v|^2 \,\mathrm{d}x \\[9pt]
= \int\limits_{\mathbb{R}^3} 2\gamma k_1 v\cdot\nabla(\xi^2) - 2k_1 \xi v\cdot\nabla \xi
+ \frac{2}{k_1} (\frac{1}{\bar{\varrho}}- \frac{1}{\varrho})\nabla\xi\cdot v \,\mathrm{d}x \\[9pt]
\lem \sqrt{\ve}\|\nabla\xi\|_{L^2(\mathbb{R}^3)}\|v\|_{L^2(\mathbb{R}^3)} \\[9pt]
\lem \sqrt{\ve}(\mathcal{F}_X[\xi](t) + \mathcal{F}[v](t)).
\end{array}
\end{equation}

Apply $\mathcal{D}^{\alpha}$ to $(\ref{Sect2_Relaxing_Eq})$, where $1\leq|\alpha|\leq 4$, we get
\begin{equation}\label{Sect3_Estimate2_1}
\left\{\begin{array}{ll}
(\mathcal{D}^{\alpha}\xi)_t + k_2\nabla\cdot(\mathcal{D}^{\alpha} v)
= -\gamma k_1\mathcal{D}^{\alpha}(\xi\nabla\cdot v)
- k_1 \mathcal{D}^{\alpha}(v\cdot\nabla \xi), \\[6pt]
k_2 \nabla(\mathcal{D}^{\alpha}\xi)
+ \mathcal{D}^{\alpha} v
= \frac{1}{k_1} \mathcal{D}^{\alpha}[(\frac{1}{\bar{\varrho}}- \frac{1}{\varrho})\nabla\xi].
\end{array}\right.
\end{equation}

Let $(\ref{Sect3_Estimate2_1})\cdot(\mathcal{D}^{\alpha}\xi, \mathcal{D}^{\alpha} v)$, we get
\begin{equation}\label{Sect3_Estimate2_2}
\begin{array}{ll}
(|\mathcal{D}^{\alpha}\xi|^2)_t
+ 2k_2 \mathcal{D}^{\alpha}\xi\nabla\cdot(\mathcal{D}^{\alpha} v)
+ 2k_2 \mathcal{D}^{\alpha} v\cdot\nabla(\mathcal{D}^{\alpha}\xi)
+ 2 |\mathcal{D}^{\alpha} v|^2 \\[6pt]
= -2\gamma k_1 (\mathcal{D}^{\alpha}\xi)\mathcal{D}^{\alpha}(\xi\nabla\cdot v)
- 2k_1 (\mathcal{D}^{\alpha}\xi)\mathcal{D}^{\alpha}(v\cdot\nabla \xi) \\[6pt]\quad
+ \frac{2}{k_1} (\mathcal{D}^{\alpha} v)\cdot
\mathcal{D}^{\alpha}[(\frac{1}{\bar{\varrho}}- \frac{1}{\varrho})\nabla\xi].
\end{array}
\end{equation}

Integrate $(\ref{Sect3_Estimate2_2})$ in $\mathbb{R}^3$ and note that $\int\limits_{\mathbb{R}^3}\nabla\cdot(\mathcal{D}^{\alpha}\xi\mathcal{D}^{\alpha} v) \,\mathrm{d}x = 0$, we get
\begin{equation}\label{Sect3_Estimate2_5}
\begin{array}{ll}
\frac{\mathrm{d}}{\mathrm{d} t}
\int\limits_{\mathbb{R}^3}|\mathcal{D}^{\alpha}\xi|^2 \,\mathrm{d}x
+ 2\int\limits_{\mathbb{R}^3}|\mathcal{D}^{\alpha} v|^2 \,\mathrm{d}x \\[10pt]
= \int\limits_{\mathbb{R}^3} -2\gamma k_1 (\mathcal{D}^{\alpha}\xi)\mathcal{D}^{\alpha}(\xi\nabla\cdot v)
- 2k_1 (\mathcal{D}^{\alpha}\xi)\mathcal{D}^{\alpha}(v\cdot\nabla \xi) \\[10pt]\quad
+ \frac{2}{k_1} (\mathcal{D}^{\alpha} v)\cdot
\mathcal{D}^{\alpha}[(\frac{1}{\bar{\varrho}}- \frac{1}{\varrho})\nabla\xi] \,\mathrm{d}x
:= I_1.
\end{array}
\end{equation}

\vspace{0.3cm}
When $1\leq|\alpha|\leq 3$, it is easy to check that
$I_1\lem \sqrt{\ve}(\mathcal{F}_X[\xi](t)+\mathcal{F}[v](t))$. \\[6pt]
\indent
When $|\alpha|=4$, we estimate the quantity
$I_1 - \frac{\mathrm{d}}{\mathrm{d}t}\int\limits_{\mathbb{R}^3}\frac{\xi}{p}(\mathcal{D}^{\alpha}\xi)^2 \,\mathrm{d}x$, then
\begin{equation}\label{Sect3_Estimate2_6}
\begin{array}{ll}
I_1 - \frac{\mathrm{d}}{\mathrm{d}t}\int\limits_{\mathbb{R}^3}\frac{\xi}{p}(\mathcal{D}^{\alpha}\xi)^2 \,\mathrm{d}x \\[8pt]
= -2\gamma k_1 \int\limits_{\mathbb{R}^3}(\mathcal{D}^{\alpha}\xi)\xi\nabla\cdot (\mathcal{D}^{\alpha} v)\,\mathrm{d}x
- 2k_1 \int\limits_{\mathbb{R}^3}(\mathcal{D}^{\alpha}\xi) v\cdot\nabla (\mathcal{D}^{\alpha}\xi)\,\mathrm{d}x \\[8pt]\quad
+ \frac{2}{k_1} \int\limits_{\mathbb{R}^3}(\frac{1}{\bar{\varrho}}
- \frac{1}{\varrho})(\mathcal{D}^{\alpha} v)\cdot\nabla(\mathcal{D}^{\alpha}\xi)\,\mathrm{d}x
- 2\int\limits_{\mathbb{R}^3}\frac{\xi}{p}(\mathcal{D}^{\alpha}\xi)
(\mathcal{D}^{\alpha}\xi_t) \,\mathrm{d}x \\[8pt]\quad
- \int\limits_{\mathbb{R}^3}\partial_t(\frac{\xi}{p})(\mathcal{D}^{\alpha}\xi)^2 \,\mathrm{d}x
\\[8pt]

\lem \sqrt{\ve}(\mathcal{F}_X[\xi](t)+\mathcal{F}[v](t))
+ k_1 \int\limits_{\mathbb{R}^3}|\mathcal{D}^{\alpha}\xi|^2
\nabla\cdot v\,\mathrm{d}x \\[8pt]\quad
-2\gamma k_1 \int\limits_{\mathbb{R}^3}\xi(\mathcal{D}^{\alpha}\xi)\nabla\cdot (\mathcal{D}^{\alpha} v)\,\mathrm{d}x

+ \frac{2}{k_1} \int\limits_{\mathbb{R}^3}(\frac{1}{\bar{\varrho}}- \frac{1}{\varrho})(\mathcal{D}^{\alpha} v)\cdot\nabla(\mathcal{D}^{\alpha}\xi)\,\mathrm{d}x
\\[8pt]\quad
- 2\int\limits_{\mathbb{R}^3}\frac{\xi}{p}
(\mathcal{D}^{\alpha}\xi)(\mathcal{D}^{\alpha}\xi_t) \,\mathrm{d}x
\\[10pt]

\lem -2\int\limits_{\mathbb{R}^3}\frac{\xi}{p}(\mathcal{D}^{\alpha}\xi)
[\mathcal{D}^{\alpha}\xi_t +k_1\gamma p\nabla\cdot (\mathcal{D}^{\alpha} v)]\,\mathrm{d}x
\\[8pt]\quad
+ \frac{2}{k_1} \int\limits_{\mathbb{R}^3}(\frac{1}{\bar{\varrho}}- \frac{1}{\varrho})(\mathcal{D}^{\alpha} v)\cdot\nabla(\mathcal{D}^{\alpha}\xi)
\,\mathrm{d}x + \sqrt{\ve}(\mathcal{F}_X[\xi](t)+\mathcal{F}[v](t)).
\end{array}
\end{equation}

Apply $\mathcal{D}^{\alpha}$ to $(\ref{Sect2_Relaxing_Eq})_1$, where $|\alpha|=4$, we get
\begin{equation}\label{Sect3_Estimate2_7}
\begin{array}{ll}
\mathcal{D}^{\alpha}\xi_t + k_1\gamma p\nabla\cdot (\mathcal{D}^{\alpha} v) = -k_1\mathcal{D}^{\alpha}(v\cdot\nabla\xi)  
- k_1\gamma\sum\limits_{|\alpha_1|>0}\mathcal{D}^{\alpha_1}\xi \nabla\cdot (\mathcal{D}^{\alpha_2} v).
\end{array}
\end{equation}

Plug $(\ref{Sect3_Estimate2_7})$ into the following integral, we get
\begin{equation}\label{Sect3_Estimate2_8}
\begin{array}{ll}
\int\limits_{\mathbb{R}^3}\frac{\xi}{p}(\mathcal{D}^{\alpha}\xi)
[\mathcal{D}^{\alpha}\xi_t+k_1\gamma p\nabla\cdot (\mathcal{D}^{\alpha} v)]\,\mathrm{d}x \\[9pt]
= \int\limits_{\mathbb{R}^3}\frac{\xi}{p}(\mathcal{D}^{\alpha}\xi)[R.H.S.\ of\ (\ref{Sect3_Estimate2_7})]\,\mathrm{d}x \\[9pt]

\lem \sqrt{\ve}(\mathcal{F}_X[\xi](t) + \mathcal{F}[v](t))
+ \frac{k_1}{2}\int\limits_{\mathbb{R}^3}|\mathcal{D}^{\alpha}\xi|^2 \nabla\cdot(\frac{\xi}{p}v) \,\mathrm{d}x
\\[6pt]
\lem \sqrt{\ve}(\mathcal{F}_X[\xi](t) + \mathcal{F}[v](t)).
\end{array}
\end{equation}

Apply $\mathcal{D}^{\alpha}$ to
$ k_1\varrho v + \nabla\xi =0$, where $|\alpha|=4$, we get
\begin{equation}\label{Sect3_Estimate2_9}
\begin{array}{ll}
\nabla(\mathcal{D}^{\alpha}\xi)
= -k_1 \varrho\mathcal{D}^{\alpha} v
-\sum\limits_{\ell_1+|\alpha_1|>0} k_1\partial_t^{\ell_1}\mathcal{D}^{\alpha_1}\zeta \partial_t^{\ell_2}\mathcal{D}^{\alpha_2}v.
\end{array}
\end{equation}

Plug $(\ref{Sect3_Estimate2_9})$ into the following integral, we get
\begin{equation}\label{Sect3_Estimate2_10}
\begin{array}{ll}
\int\limits_{\mathbb{R}^3}(\frac{1}{\bar{\varrho}}- \frac{1}{\varrho})(\mathcal{D}^{\alpha} v)\cdot\nabla(\mathcal{D}^{\alpha}\xi)\,\mathrm{d}x \\[6pt]
= \int\limits_{\mathbb{R}^3}(\frac{1}{\bar{\varrho}}- \frac{1}{\varrho})(\mathcal{D}^{\alpha} v)\cdot [R.H.S.\ of\ (\ref{Sect3_Estimate2_9})]\,\mathrm{d}x

\lem \sqrt{\ve}\mathcal{F}[v](t).
\end{array}
\end{equation}

Plug $(\ref{Sect3_Estimate2_8})$ and $(\ref{Sect3_Estimate2_10})$ into $(\ref{Sect3_Estimate2_6})$, we get
\begin{equation}\label{Sect3_Estimate2_11}
\begin{array}{ll}
I_1 - \frac{\mathrm{d}}{\mathrm{d}t}\int\limits_{\mathbb{R}^3}\frac{\xi}{p}(\mathcal{D}^{\alpha}\xi)^2 \,\mathrm{d}x
\lem \sqrt{\ve}(\mathcal{F}_X[\xi](t) + \mathcal{F}[v](t)).
\end{array}
\end{equation}

After summing $\alpha$, where $0\leq|\alpha|\leq 4$, we have
\begin{equation}\label{Sect3_Estimate2_12}
\begin{array}{ll}
\frac{\mathrm{d}}{\mathrm{d} t}\left(\sum\limits_{0<|\alpha|\leq 4}\
\int\limits_{\mathbb{R}^3}|\mathcal{D}^{\alpha}\xi|^2 \,\mathrm{d}x
-\sum\limits_{|\alpha|=4}\
\int\limits_{\mathbb{R}^3}\frac{\xi}{p} (\mathcal{D}^{\alpha}\xi)^2\,\mathrm{d}x \right) 
+ 2 \sum\limits_{0<|\alpha|\leq 4}\
\int\limits_{\mathbb{R}^3}|\mathcal{D}^{\alpha}v|^2 \,\mathrm{d}x \\[15pt]
\lem \sqrt{\ve}(\mathcal{F}_X[\xi](t) + \mathcal{F}[v](t)).
\end{array}
\end{equation}

\vspace{0.3cm}
Next, we estimate the part of $\mathcal{F}_1[\xi](t)$ which contains the time derivatives.
Apply $\mathcal{D}^{\alpha}\partial_t$ to $(\ref{Sect2_Relaxing_Eq})$, where $0\leq|\alpha|\leq 2$, we get
\begin{equation}\label{Sect3_Estimate3_1}
\left\{\begin{array}{ll}
(\mathcal{D}^{\alpha}\xi_t)_t + k_2\nabla\cdot(\mathcal{D}^{\alpha}v_t)
= -\gamma k_1\mathcal{D}^{\alpha}\partial_t(\xi\nabla\cdot v)
- k_1 \mathcal{D}^{\alpha}\partial_t(v\cdot\nabla \xi), \\[6pt]
k_2 \nabla(\mathcal{D}^{\alpha}\xi_t)
+ \mathcal{D}^{\alpha}v_t
= \frac{1}{k_1} \mathcal{D}^{\alpha}\partial_t[(\frac{1}{\bar{\varrho}}- \frac{1}{\varrho})\nabla\xi].
\end{array}\right.
\end{equation}

Let $(\ref{Sect3_Estimate3_1})\cdot(\mathcal{D}^{\alpha}\xi_t, \mathcal{D}^{\alpha}v_t)$, we get
\begin{equation}\label{Sect3_Estimate3_2}
\begin{array}{ll}
(|\mathcal{D}^{\alpha}\xi_t|^2)_t
+ 2k_2 \mathcal{D}^{\alpha}\xi_t\nabla\cdot(\mathcal{D}^{\alpha}v_t)
+ 2k_2 \mathcal{D}^{\alpha}v_t\cdot\nabla(\mathcal{D}^{\alpha}\xi_t)
+ 2 |\mathcal{D}^{\alpha}v_t|^2 \\[6pt]
= -2\gamma k_1 (\mathcal{D}^{\alpha}\xi_t)\mathcal{D}^{\alpha}\partial_t(\xi\nabla\cdot v)
- 2k_1 (\mathcal{D}^{\alpha}\xi_t)\mathcal{D}^{\alpha}\partial_t(v\cdot\nabla \xi) \\[6pt]\quad
+ \frac{2}{k_1} (\mathcal{D}^{\alpha}v_t)\cdot
\mathcal{D}^{\alpha}\partial_t[(\frac{1}{\bar{\varrho}}- \frac{1}{\varrho})\nabla\xi].
\end{array}
\end{equation}

Integrate $(\ref{Sect3_Estimate3_2})$ in $\mathbb{R}^3$ and note that $\int\limits_{\mathbb{R}^3}\nabla\cdot(\mathcal{D}^{\alpha}\xi_t\mathcal{D}^{\alpha}v_t) \,\mathrm{d}x = 0$, we get
\begin{equation}\label{Sect3_Estimate3_5}
\begin{array}{ll}
\frac{\mathrm{d}}{\mathrm{d} t}
\int\limits_{\mathbb{R}^3}|\mathcal{D}^{\alpha}\xi_t|^2 \,\mathrm{d}x
+ 2\int\limits_{\mathbb{R}^3}|\mathcal{D}^{\alpha}v_t|^2 \,\mathrm{d}x \\[10pt]
= \int\limits_{\mathbb{R}^3} -2\gamma k_1 (\mathcal{D}^{\alpha}\xi_t)\mathcal{D}^{\alpha}\partial_t(\xi\nabla\cdot v)
- 2k_1 (\mathcal{D}^{\alpha}\xi_t)\mathcal{D}^{\alpha}\partial_t(v\cdot\nabla \xi) \\[10pt]\quad
+ \frac{2}{k_1} (\mathcal{D}^{\alpha}v_t)\cdot
\mathcal{D}^{\alpha}\partial_t[(\frac{1}{\bar{\varrho}}- \frac{1}{\varrho})\nabla\xi] \,\mathrm{d}x
:= I_2.
\end{array}
\end{equation}

\vspace{0.3cm}
When $0\leq|\alpha|\leq 1$, it is easy to check that
$I_2\lem \sqrt{\ve}(\mathcal{F}_X[\xi](t)+\mathcal{F}[v](t))$. \\[6pt]
\indent
When $|\alpha|=2$, we estimate the quantity
$I_2 - \frac{\mathrm{d}}{\mathrm{d}t}\int\limits_{\mathbb{R}^3}\frac{\xi}{p}(\mathcal{D}^{\alpha}\xi_t)^2 \,\mathrm{d}x$, then
\begin{equation}\label{Sect3_Estimate3_6}
\begin{array}{ll}
I_2 - \frac{\mathrm{d}}{\mathrm{d}t}\int\limits_{\mathbb{R}^3}\frac{\xi}{p}(\mathcal{D}^{\alpha}\xi_t)^2 \,\mathrm{d}x \\[8pt]
= -2\gamma k_1 \int\limits_{\mathbb{R}^3}(\mathcal{D}^{\alpha}\xi_t)\xi\nabla\cdot (\mathcal{D}^{\alpha}v_t)\,\mathrm{d}x
- 2k_1 \int\limits_{\mathbb{R}^3}(\mathcal{D}^{\alpha}\xi_t) v\cdot\nabla (\mathcal{D}^{\alpha}\xi_t)\,\mathrm{d}x \\[8pt]\quad
+ \frac{2}{k_1} \int\limits_{\mathbb{R}^3}(\frac{1}{\bar{\varrho}}
- \frac{1}{\varrho})(\mathcal{D}^{\alpha}v_t)\cdot\nabla(\mathcal{D}^{\alpha}\xi_t)\,\mathrm{d}x
- 2\int\limits_{\mathbb{R}^3}\frac{\xi}{p}(\mathcal{D}^{\alpha}\xi_t)
(\mathcal{D}^{\alpha}\xi_{tt}) \,\mathrm{d}x \\[8pt]\quad
- \int\limits_{\mathbb{R}^3}\partial_t(\frac{\xi}{p})(\mathcal{D}^{\alpha}\xi_t)^2 \,\mathrm{d}x
\\[8pt]

\lem \sqrt{\ve}(\mathcal{F}_X[\xi](t)+\mathcal{F}[v](t))
+ k_1 \int\limits_{\mathbb{R}^3}|\mathcal{D}^{\alpha}\xi_t|^2
\nabla\cdot v\,\mathrm{d}x \\[8pt]\quad
-2\gamma k_1 \int\limits_{\mathbb{R}^3}\xi(\mathcal{D}^{\alpha}\xi_t)\nabla\cdot (\mathcal{D}^{\alpha}v_t)\,\mathrm{d}x

+ \frac{2}{k_1} \int\limits_{\mathbb{R}^3}(\frac{1}{\bar{\varrho}}- \frac{1}{\varrho})(\mathcal{D}^{\alpha}v_t)\cdot\nabla(\mathcal{D}^{\alpha}\xi_t)\,\mathrm{d}x
\\[8pt]\quad
- 2\int\limits_{\mathbb{R}^3}\frac{\xi}{p}
(\mathcal{D}^{\alpha}\xi_t)(\mathcal{D}^{\alpha}\xi_{tt}) \,\mathrm{d}x
\\[10pt]

\lem -2\int\limits_{\mathbb{R}^3}\frac{\xi}{p}(\mathcal{D}^{\alpha}\xi_t)
[\mathcal{D}^{\alpha}\xi_{tt} +k_1\gamma p\nabla\cdot (\mathcal{D}^{\alpha}v_t)]\,\mathrm{d}x
\\[8pt]\quad
+ \frac{2}{k_1} \int\limits_{\mathbb{R}^3}(\frac{1}{\bar{\varrho}}- \frac{1}{\varrho})(\mathcal{D}^{\alpha}v_t)\cdot\nabla(\mathcal{D}^{\alpha}\xi_t)
\,\mathrm{d}x + \sqrt{\ve}(\mathcal{F}_X[\xi](t)+\mathcal{F}[v](t)).
\end{array}
\end{equation}

Apply $\mathcal{D}^{\alpha}\partial_t$ to $(\ref{Sect2_Relaxing_Eq})_1$, where $|\alpha|=2$, we get
\begin{equation}\label{Sect3_Estimate3_7}
\begin{array}{ll}
\mathcal{D}^{\alpha}\xi_{tt} + k_1\gamma p\nabla\cdot (\mathcal{D}^{\alpha}v_t) \\[6pt]
= -k_1\mathcal{D}^{\alpha}(v_t\cdot\nabla\xi) -k_1\mathcal{D}^{\alpha}(v\cdot\nabla\xi_t)\\[8pt]\quad
- k_1\gamma\sum\limits_{|\alpha_1|\geq 0}\mathcal{D}^{\alpha_1}\xi_t \nabla\cdot (\mathcal{D}^{\alpha_2} v)
- k_1\gamma\sum\limits_{|\alpha_1|>0}\mathcal{D}^{\alpha_1}\xi \nabla\cdot (\mathcal{D}^{\alpha_2}v_t).
\end{array}
\end{equation}

Plug $(\ref{Sect3_Estimate3_7})$ into the following integral, we get
\begin{equation}\label{Sect3_Estimate3_8}
\begin{array}{ll}
\int\limits_{\mathbb{R}^3}\frac{\xi}{p}(\mathcal{D}^{\alpha}\xi_t)
[\mathcal{D}^{\alpha}\xi_{tt}+k_1\gamma p\nabla\cdot (\mathcal{D}^{\alpha}v_t)]\,\mathrm{d}x \\[9pt]
= \int\limits_{\mathbb{R}^3}\frac{\xi}{p}(\mathcal{D}^{\alpha}\xi_t)[R.H.S.\ of\ (\ref{Sect3_Estimate3_7})]\,\mathrm{d}x \\[9pt]

\lem \sqrt{\ve}(\mathcal{F}_X[\xi](t) + \mathcal{F}[v](t))
+ \frac{k_1}{2}\int\limits_{\mathbb{R}^3}|\mathcal{D}^{\alpha}\xi_t|^2 \nabla\cdot(\frac{\xi}{p}v) \,\mathrm{d}x
\\[6pt]
\lem \sqrt{\ve}(\mathcal{F}_X[\xi](t) + \mathcal{F}[v](t)).
\end{array}
\end{equation}

Apply $\mathcal{D}^{\alpha}\partial_t$ to
$ k_1\varrho v + \nabla\xi =0$, where $|\alpha|=2$, we get
\begin{equation}\label{Sect3_Estimate3_9}
\begin{array}{ll}
\nabla(\mathcal{D}^{\alpha}\xi_t)
= -k_1 \varrho\mathcal{D}^{\alpha}v_t
-\sum\limits_{|\alpha_1|>0} k_1\mathcal{D}^{\alpha_1}\zeta \mathcal{D}^{\alpha_2}v_t
-\sum\limits_{|\alpha_1|\geq 0} k_1\mathcal{D}^{\alpha_1}\zeta_t \mathcal{D}^{\alpha_2}v.
\end{array}
\end{equation}

Plug $(\ref{Sect3_Estimate3_9})$ into the following integral, we get
\begin{equation}\label{Sect3_Estimate3_10}
\begin{array}{ll}
\int\limits_{\mathbb{R}^3}(\frac{1}{\bar{\varrho}}- \frac{1}{\varrho})(\mathcal{D}^{\alpha}v_t)\cdot\nabla(\mathcal{D}^{\alpha}\xi_t)\,\mathrm{d}x \\[6pt]
= \int\limits_{\mathbb{R}^3}(\frac{1}{\bar{\varrho}}- \frac{1}{\varrho})(\mathcal{D}^{\alpha}v_t)\cdot [R.H.S.\ of\ (\ref{Sect3_Estimate3_9})]\,\mathrm{d}x

\lem \sqrt{\ve}\mathcal{F}[v](t).
\end{array}
\end{equation}

Plug $(\ref{Sect3_Estimate3_8})$ and $(\ref{Sect3_Estimate3_10})$ into $(\ref{Sect3_Estimate3_6})$, we get
\begin{equation}\label{Sect3_Estimate3_11}
\begin{array}{ll}
I_2 - \frac{\mathrm{d}}{\mathrm{d}t}\int\limits_{\mathbb{R}^3}\frac{\xi}{p}(\mathcal{D}^{\alpha}\xi_t)^2 \,\mathrm{d}x
\lem \sqrt{\ve}(\mathcal{F}_X[\xi](t) + \mathcal{F}[v](t)).
\end{array}
\end{equation}

After summing $\alpha$, where $0\leq|\alpha|\leq 2$, we have
\begin{equation}\label{Sect3_Estimate3_12}
\begin{array}{ll}
\frac{\mathrm{d}}{\mathrm{d} t}\left(\sum\limits_{0\leq|\alpha|\leq 2}\
\int\limits_{\mathbb{R}^3}|\mathcal{D}^{\alpha}\xi_t|^2 \,\mathrm{d}x
-\sum\limits_{|\alpha|=2}\
\int\limits_{\mathbb{R}^3}\frac{\xi}{p} (\mathcal{D}^{\alpha}\xi_t)^2\,\mathrm{d}x \right) 
+ 2 \sum\limits_{0\leq|\alpha|\leq 2}\
\int\limits_{\mathbb{R}^3}|\mathcal{D}^{\alpha}v_t|^2 \,\mathrm{d}x \\[15pt]\quad
\lem \sqrt{\ve}(\mathcal{F}_X[\xi](t) + \mathcal{F}[v](t)).
\end{array}
\end{equation}

By $(\ref{Sect3_Estimate2_12})+(\ref{Sect3_Estimate3_12})$, we obtain
\begin{equation}\label{Sect3_Estimate2_14}
\frac{\mathrm{d}}{\mathrm{d}t}\mathcal{F}_1[\xi](t)
+ 2 \mathcal{F}[v](t) \lem \sqrt{\ve}(\mathcal{F}_X[\xi](t) + \mathcal{F}[v](t)).
\end{equation}
Thus, Lemma $\ref{Sect3_Energy_Estimate_Lemma2}$ is proved.
\end{proof}

The following lemma concerns the estimate for $L^{\infty}$ bound of $\mathcal{F}[v](t)$.
\begin{lemma}\label{Sect3_Velocity_Lemma}
For any given $T\in (0,+\infty)$, if
\begin{equation*}
\sup\limits_{0\leq t\leq T} \mathcal{F}[\xi,v,\phi,\zeta](t) \leq\ve,
\end{equation*}
where $0<\ve\ll 1$, then for $\forall t\in [0,T]$,
\begin{equation}\label{Sect3_Velocity_toProve}
\begin{array}{ll}
\frac{\mathrm{d}}{\mathrm{d}t} \mathcal{F}[v](t)
+ \frac{\gamma}{a}\int\limits_{\mathbb{R}^3} \frac{p}{\varrho}\sum\limits_{0\leq|\alpha|\leq 4}|\nabla\cdot \mathcal{D}^{\alpha}v|^2\,\mathrm{d}x
+ \frac{\gamma}{a}\int\limits_{\mathbb{R}^3} \frac{p}{\varrho}\sum\limits_{0\leq|\alpha|\leq 2}|\nabla\cdot \mathcal{D}^{\alpha}v_t|^2\,\mathrm{d}x \\[9pt]
\leq C\sqrt{\ve}\mathcal{F}[v](t).
\end{array}
\end{equation}
\end{lemma}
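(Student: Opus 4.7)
The strategy mirrors the energy argument of Lemma \ref{Sect3_Energy_Estimate_Lemma2}, but now using the velocity evolution equation (\ref{Sect2_Velocity_Solve}) rather than the coupled pressure--velocity system. For each multi-index $(\ell,\alpha)$ that appears in $\mathcal{F}[v]$ (i.e.\ $\ell=0$ with $0\le|\alpha|\le 4$, or $\ell=1$ with $0\le|\alpha|\le 2$), I apply $\partial_t^\ell\mathcal{D}^\alpha$ to (\ref{Sect2_Velocity_Solve}), take the $L^2(\mathbb{R}^3)$ inner product with $\partial_t^\ell\mathcal{D}^\alpha v$, and integrate. The left-hand side yields $\tfrac{1}{2}\tfrac{d}{dt}\|\partial_t^\ell\mathcal{D}^\alpha v\|_{L^2}^2$; summing over admissible $(\ell,\alpha)$ produces $\tfrac{1}{2}\tfrac{d}{dt}\mathcal{F}[v](t)$.

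The dissipation is extracted from the principal contribution $\int\partial_t^\ell\mathcal{D}^\alpha v\cdot\partial_t^\ell\mathcal{D}^\alpha\bigl[\tfrac{\gamma p}{\varrho}\nabla(\nabla\cdot v)\bigr]\,dx$. Isolating the top-order piece $\tfrac{\gamma p}{\varrho}\nabla(\nabla\cdot\partial_t^\ell\mathcal{D}^\alpha v)$ and integrating by parts once produces
\begin{equation*}
-\int \frac{\gamma p}{\varrho}|\nabla\cdot\partial_t^\ell\mathcal{D}^\alpha v|^2\,dx - \int \nabla\!\Bigl(\frac{\gamma p}{\varrho}\Bigr)\cdot \partial_t^\ell\mathcal{D}^\alpha v\,(\nabla\cdot\partial_t^\ell\mathcal{D}^\alpha v)\,dx,
\end{equation*}
the first summand being the dissipation we seek. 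Since $\nabla(\gamma p/\varrho)$ is $O(\sqrt{\ve})$ in $L^\infty$ by Lemma \ref{Sect3_Varrho_Lemma} and Sobolev embedding, the second summand is controlled via Cauchy--Schwarz and Young's inequality by $\sqrt{\ve}\mathcal{F}[v]$ plus a small multiple of the dissipation, the latter being absorbed using that $\gamma p/\varrho$ is bounded below for $\ve\ll 1$ (Lemma \ref{Sect3_Epsilon0_Lemma}). The cubic terms $\mathcal{D}^\alpha[v(\nabla\cdot v)]$, $\mathcal{D}^\alpha[v\cdot\nabla v]$, $\mathcal{D}^\alpha[\nabla|v|^2]$ expand via Leibniz into products in which at least one factor lies in $L^\infty$ with norm $\lem\sqrt{\ve}$ by (\ref{Sect2_Sobolev_Ineq}) and $H^2\hookrightarrow L^\infty$; each is then bounded by $\sqrt{\ve}\mathcal{F}[v]$. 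Completely analogous reasoning applies when $\ell=1$, with $\partial_t(p/\varrho)$ and $\partial_t\varrho$ playing the role of $\nabla(p/\varrho)$ and being of size $\sqrt{\ve}$ by Lemma \ref{Sect3_Varrho_Lemma}.

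The main obstacle is the top-order commutator in $\partial_t^\ell\mathcal{D}^\alpha[\tfrac{\gamma p}{\varrho}\nabla(\nabla\cdot v)]$ at maximal $\ell+|\alpha|$. For example, at $\ell=0,|\alpha|=4$ the Leibniz expansion contains terms $\mathcal{D}^{\alpha_1}(p/\varrho)\,\mathcal{D}^{\alpha_2}\nabla(\nabla\cdot v)$ with $|\alpha_1|\ge 1$ and $|\alpha_2|=3$, whose second factor carries up to five spatial derivatives of $v$---more than $\mathcal{F}[v]$ controls in $L^2$. I will handle this by integrating by parts once more to move one $\nabla$ from $\nabla(\nabla\cdot v)$ back onto $\mathcal{D}^{\alpha_1}(p/\varrho)\,\mathcal{D}^\alpha v$; then both factors carry at most four derivatives, and appropriate $L^4$--$L^4$ or $L^\infty$--$L^2$ H\"older pairings combined with (\ref{Sect2_Sobolev_Ineq}) and the smallness of $\mathcal{D}^{\alpha_1}(p/\varrho)$ for $|\alpha_1|\ge 1$ (itself a consequence of $\sup\mathcal{F}\le\ve$) yield the bound $\sqrt{\ve}\mathcal{F}[v]$. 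Summing over all admissible $(\ell,\alpha)$ and absorbing the $O(\sqrt{\ve})$ portion of the dissipation leaves a fixed fraction $1/a$ of it on the left side, producing the stated inequality with some constant $a>1$ (effectively $a=2$ once $\ve$ is sufficiently small).
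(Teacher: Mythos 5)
Your overall strategy matches the paper's: differentiate the reduced velocity equation (\ref{Sect2_Velocity_Solve}) by $\partial_t^\ell\mathcal{D}^\alpha$, pair with $\partial_t^\ell\mathcal{D}^\alpha v$, integrate by parts the principal term $\frac{\gamma p}{\varrho}\nabla(\nabla\cdot\,\cdot)$ to extract the weighted dissipation, and control the cubic and commutator terms by $\sqrt{\ve}\mathcal{F}[v]$ plus absorbable multiples of the dissipation. That part is fine.

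The place where your argument goes wrong is the treatment of the top-order commutator. You claim that after integrating by parts once more in a term like $\int \mathcal{D}^{\alpha_1}(\tfrac{p}{\varrho})\,\mathcal{D}^{\alpha_2}\nabla(\nabla\cdot v)\cdot\mathcal{D}^\alpha v\,dx$ (with $|\alpha|=4$, $|\alpha_1|=1$, $|\alpha_2|=3$), ``both factors carry at most four derivatives'' and the term is bounded directly by $\sqrt{\ve}\mathcal{F}[v]$. This is not so: the Leibniz rule under that integration by parts produces
\begin{equation*}
-\int \mathcal{D}^{\alpha_2}(\nabla\cdot v)\,\Bigl[\nabla\mathcal{D}^{\alpha_1}\bigl(\tfrac{p}{\varrho}\bigr)\cdot\mathcal{D}^\alpha v + \mathcal{D}^{\alpha_1}\bigl(\tfrac{p}{\varrho}\bigr)\,(\nabla\cdot\mathcal{D}^\alpha v)\Bigr]\,dx,
\end{equation*}
and the second piece still contains $\nabla\cdot\mathcal{D}^\alpha v$, which is a fifth-order derivative of $v$ and is \emph{not} controlled by $\mathcal{F}[v]$. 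You cannot avoid a five-derivative factor here: the two $v$-factors share nine derivatives between them, so one of them must carry at least five. The extra integration by parts therefore does not achieve what you say it does and the claimed bound $\sqrt{\ve}\mathcal{F}[v]$ for that piece is false as stated. The paper handles this without the extra integration by parts. It observes that $\mathcal{D}^{\alpha_2}\nabla(\nabla\cdot v)$ with $|\alpha_2|\le 3$ is simply a spatial derivative of $\nabla\cdot v$ of order $\le 4$, hence its $L^2$ norm is precisely one of the summands in the dissipation $\sum_{0\le|\beta|\le 4}\int\frac{p}{\varrho}|\mathcal{D}^\beta(\nabla\cdot v)|^2\,dx$ appearing on the left-hand side; Young's inequality with the $O(\sqrt{\ve})$ factor $|\mathcal{D}^{\alpha_1}(\tfrac{p}{\varrho})|_\infty$ (or an $L^4$--$L^4$ or $L^2$--$L^\infty$ pairing via (\ref{Sect2_Sobolev_Ineq}) when $|\alpha_1|\ge 3$) then produces $\sqrt{\ve}\mathcal{F}[v]$ plus a small multiple of that dissipation, which is absorbed by choosing the auxiliary constants $\Lambda_{0,\alpha,j}$ appropriately. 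Your fix is therefore to drop the extra integration by parts and bound the five-derivative factor directly against the dissipation, as the paper does; the mechanism you gesture at in your final paragraph (``absorbing the $O(\sqrt{\ve})$ portion of the dissipation'') is exactly what is needed here, not only for the lower-order boundary terms.

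One further small misconception: the constant $a$ in the statement is a fixed constant of the paper, not a parameter you may tune at the end; what you actually have freedom over is how much of the dissipation you sacrifice to the Young absorptions, and that is what the paper's $\Lambda$'s encode.
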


\begin{proof}
Firstly, we estimate the $L^{\infty}$ bound of $\|v\|_{H^4(\mathbb{R}^3)}$:
Let $\mathcal{D}^{\alpha} v\cdot\mathcal{D}^{\alpha}(\ref{Sect2_Velocity_Solve})$,
where $0\leq|\alpha|\leq 4$, we get
\begin{equation}\label{Sect3_Velocity_2}
\begin{array}{ll}
\partial_t|\mathcal{D}^{\alpha} v|^2 = 2\mathcal{D}^{\alpha} v\cdot\mathcal{D}^{\alpha}
[k_1(1-\gamma)v(\nabla\cdot v)- k_1 v\cdot\nabla v - \frac{k_1}{2} \nabla(|v|^2)] \\[7pt]\hspace{1.8cm}

+ \frac{2\gamma}{a}\sum\limits_{|\alpha_1|>0}
[\mathcal{D}^{\alpha_1}(\frac{p}{\varrho})\mathcal{D}^{\alpha_2}\nabla(\nabla\cdot v)]\cdot\mathcal{D}^{\alpha} v \\[10pt]\hspace{1.8cm}
+ \frac{2\gamma p}{a\varrho}\mathcal{D}^{\alpha}\nabla(\nabla\cdot v)\cdot\mathcal{D}^{\alpha} v.
\end{array}
\end{equation}

Integrate $(\ref{Sect3_Velocity_2})$ in $\mathbb{R}^3$, we get
\begin{equation}\label{Sect3_Velocity_3}
\begin{array}{ll}
\frac{\mathrm{d}}{\mathrm{d}t}\int\limits_{\mathbb{R}^3}|\mathcal{D}^{\alpha} v|^2 \,\mathrm{d}x
\\[10pt]
= 2\int\limits_{\mathbb{R}^3}\mathcal{D}^{\alpha} v\cdot\mathcal{D}^{\alpha}
[k_1(1-\gamma)v(\nabla\cdot v)- k_1 v\cdot\nabla v
- \frac{k_1}{2} \nabla(|v|^2)]\,\mathrm{d}x \\[10pt]\quad

+ \frac{2\gamma}{a}\int\limits_{\mathbb{R}^3}\sum\limits_{|\alpha_1|>0}
[\mathcal{D}^{\alpha_1}(\frac{p}{\varrho})\mathcal{D}^{\alpha_2}\nabla(\nabla\cdot v)]\cdot
\mathcal{D}^{\alpha} v \,\mathrm{d}x
\\[10pt]\quad

+ \frac{2\gamma}{a}\int\limits_{\mathbb{R}^3}\frac{p}{\varrho}\mathcal{D}^{\alpha}\nabla(\nabla\cdot v)
\cdot\mathcal{D}^{\alpha} v \,\mathrm{d}x := I_3.
\end{array}
\end{equation}

When $0\leq|\alpha|\leq 3$,
\begin{equation}\label{Sect3_Velocity_8}
\begin{array}{ll}
\frac{2\gamma}{a}\int\limits_{\mathbb{R}^3} \frac{p}{\varrho}\mathcal{D}^{\alpha}v\cdot\nabla(\nabla\cdot \mathcal{D}^{\alpha}v)\,\mathrm{d}x \\[10pt]
= - \frac{2\gamma}{a}\int\limits_{\mathbb{R}^3} \frac{p}{\varrho}|\nabla\cdot \mathcal{D}^{\alpha}v|^2\,\mathrm{d}x
- \frac{2\gamma}{a}\int\limits_{\mathbb{R}^3} (\nabla\cdot \mathcal{D}^{\alpha}v)\mathcal{D}^{\alpha}v\cdot\nabla(\frac{p}{\varrho}) \,\mathrm{d}x \\[10pt]
\leq C\sqrt{\ve}\mathcal{F}[v](t) - \frac{2\gamma}{a}\int\limits_{\mathbb{R}^3} \frac{p}{\varrho}|\nabla\cdot \mathcal{D}^{\alpha}v|^2\,\mathrm{d}x.
\end{array}
\end{equation}

Then
\begin{equation}\label{Sect3_Velocity_9}
\begin{array}{ll}
\frac{\mathrm{d}}{\mathrm{d}t} \int\limits_{\mathbb{R}^3}|\mathcal{D}^{\alpha}v|^2 \,\mathrm{d}x
+ \frac{2\gamma}{a}\int\limits_{\mathbb{R}^3} \frac{p}{\varrho}|\nabla\cdot \mathcal{D}^{\alpha}v|^2\,\mathrm{d}x
\lem \sqrt{\ve}\mathcal{F}[v](t).
\end{array}
\end{equation}

When $|\alpha|=4$, we estimate each term of $I_3$ separately. Assume the positive constants $\Lambda_{0,\alpha,1},\Lambda_{0,\alpha,2},\Lambda_{0,\alpha,3},\Lambda_{0,\alpha,4}$ are so small that 
$\sum\limits_{j=1}^{4} \Lambda_{0,\alpha,j}\leq \frac{\gamma}{a}$.

The first term of $I_3$:
\begin{equation}\label{Sect3_Velocity_10}
\begin{array}{ll}
2k_1(1-\gamma)\int\limits_{\mathbb{R}^3}\mathcal{D}^{\alpha} v\cdot\mathcal{D}^{\alpha}
[v(\nabla\cdot v)]\,\mathrm{d}x \\[6pt]

\leq C\sqrt{\ve}\mathcal{F}[v](t)
+ \Lambda_{0,\alpha,1}\int\limits_{\mathbb{R}^3}\frac{p}{\varrho}|\nabla\cdot \mathcal{D}^{\alpha}v|^2\,\mathrm{d}x
+ C\int\limits_{\mathbb{R}^3}\frac{\varrho}{p}(v\cdot \mathcal{D}^{\alpha}v)^2\,\mathrm{d}x

\\[6pt]
\leq C\sqrt{\ve}\mathcal{F}[v](t)
+ \Lambda_{0,\alpha,1}\int\limits_{\mathbb{R}^3}\frac{p}{\varrho}|\nabla\cdot \mathcal{D}^{\alpha}v|^2\,\mathrm{d}x.
\end{array}
\end{equation}

The second term of $I_3$:
\begin{equation}\label{Sect3_Velocity_11}
\begin{array}{ll}
-2 k_1\int\limits_{\mathbb{R}^3}\mathcal{D}^{\alpha} v\cdot\mathcal{D}^{\alpha}
(v\cdot\nabla v)\,\mathrm{d}x \\[6pt]

\leq C\sqrt{\ve}\mathcal{F}[v](t) + k_1\int\limits_{\mathbb{R}^3}\nabla\cdot v |\mathcal{D}^{\alpha}v|^2\,\mathrm{d}x
\leq C\sqrt{\ve}\mathcal{F}[v](t).
\end{array}
\end{equation}

The third term of $I_3$:
\begin{equation}\label{Sect3_Velocity_12}
\begin{array}{ll}
-k_1 \int\limits_{\mathbb{R}^3}\mathcal{D}^{\alpha} v\cdot\mathcal{D}^{\alpha}\nabla(|v|^2)\,\mathrm{d}x 

= k_1\int\limits_{\mathbb{R}^3}\nabla\cdot \mathcal{D}^{\alpha}v \mathcal{D}^{\alpha}(|v|^2)\,\mathrm{d}x \\[9pt]

\leq \Lambda_{0,\alpha,2}\int\limits_{\mathbb{R}^3}\frac{p}{\varrho}|\nabla\cdot \mathcal{D}^{\alpha}v|^2\,\mathrm{d}x
+ C\int\limits_{\mathbb{R}^3}\frac{\varrho}{p}|\mathcal{D}^{\alpha}(|v|^2)|^2\,\mathrm{d}x \\[9pt]

\leq C\sqrt{\ve}\mathcal{F}[v](t)
+ \Lambda_{0,\alpha,2}\int\limits_{\mathbb{R}^3}\frac{p}{\varrho}|\nabla\cdot \mathcal{D}^{\alpha}v|^2\,\mathrm{d}x.
\end{array}
\end{equation}

The fourth term of $I_3$:
\begin{equation}\label{Sect3_Velocity_13}
\begin{array}{ll}
\frac{2\gamma}{a}\int\limits_{\mathbb{R}^3}\sum\limits_{|\alpha_1|>0}
[\mathcal{D}^{\alpha_1}(\frac{p}{\varrho})\mathcal{D}^{\alpha_2}\nabla(\nabla\cdot v)]\cdot
\mathcal{D}^{\alpha} v \,\mathrm{d}x \\[12pt]

\leq \frac{2\gamma}{a}\sum\limits_{|\alpha_1|=1}
|\mathcal{D}^{\alpha_1}(\frac{p}{\varrho})|_{\infty}
\|\mathcal{D}^{\alpha_2}\nabla(\nabla\cdot v)\|_{L^2(\mathbb{R}^3)}
\|\mathcal{D}^{\alpha} v\|_{L^2(\mathbb{R}^3)} \\[10pt]\quad
+\frac{2\gamma}{a}\sum\limits_{|\alpha_1|=2}
|\mathcal{D}^{\alpha_1}(\frac{p}{\varrho})|_{\infty}
\|\mathcal{D}^{\alpha_2}\nabla(\nabla\cdot v)\|_{L^2(\mathbb{R}^3)}
\|\mathcal{D}^{\alpha} v\|_{L^2(\mathbb{R}^3)} \\[10pt]\quad
+\frac{2\gamma}{a}\sum\limits_{|\alpha_1|=3}
\|\mathcal{D}^{\alpha_1}(\frac{p}{\varrho})\|_{L^4(\mathbb{R}^3)}
\|\mathcal{D}^{\alpha_2}\nabla(\nabla\cdot v)\|_{L^4(\mathbb{R}^3)}
\|\mathcal{D}^{\alpha} v\|_{L^2(\mathbb{R}^3)} \\[10pt]\quad
+\frac{2\gamma}{a}|\nabla(\nabla\cdot v)|_{\infty}
\|\mathcal{D}^{\alpha}(\frac{p}{\varrho})\|_{L^2(\mathbb{R}^3)}
\|\mathcal{D}^{\alpha} v\|_{L^2(\mathbb{R}^3)} \\[10pt]

\leq C\sqrt{\ve}\mathcal{F}[v](t)
+ \Lambda_{0,\alpha,3}\int\limits_{\mathbb{R}^3}\frac{p}{\varrho}|\nabla\cdot \mathcal{D}^{\alpha}v|^2\,\mathrm{d}x.
\end{array}
\end{equation}

The fifth term of $I_3$:
\begin{equation}\label{Sect3_Velocity_14}
\begin{array}{ll}
\frac{2\gamma}{a}\int\limits_{\mathbb{R}^3} \frac{p}{\varrho}\mathcal{D}^{\alpha}v\cdot\nabla(\nabla\cdot \mathcal{D}^{\alpha}v)\,\mathrm{d}x \\[10pt]
= - \frac{2\gamma}{a}\int\limits_{\mathbb{R}^3} \frac{p}{\varrho}|\nabla\cdot \mathcal{D}^{\alpha}v|^2\,\mathrm{d}x
- \frac{2\gamma}{a}\int\limits_{\mathbb{R}^3} (\nabla\cdot \mathcal{D}^{\alpha}v)\mathcal{D}^{\alpha}v\cdot\nabla(\frac{p}{\varrho}) \,\mathrm{d}x \\[10pt]

\leq C\sqrt{\ve}\mathcal{F}[v] +(-\frac{2\gamma}{a}+\Lambda_{0,\alpha,4})\int\limits_{\mathbb{R}^3} \frac{p}{\varrho}|\nabla\cdot \mathcal{D}^{\alpha}v|^2\,\mathrm{d}x 
+ C\int\limits_{\mathbb{R}^3} [\mathcal{D}^{\alpha}v\cdot\nabla(\frac{p}{\varrho})]^2 \,\mathrm{d}x
 \\[10pt]
\leq C\sqrt{\ve}\mathcal{F}[v] +(-\frac{2\gamma}{a}+\Lambda_{0,\alpha,4})\int\limits_{\mathbb{R}^3} \frac{p}{\varrho}|\nabla\cdot \mathcal{D}^{\alpha}v|^2\,\mathrm{d}x.
\end{array}
\end{equation}

By $(\ref{Sect3_Velocity_10})+(\ref{Sect3_Velocity_11})+(\ref{Sect3_Velocity_12})+(\ref{Sect3_Velocity_13})+
(\ref{Sect3_Velocity_14})$, we have that when $|\alpha|=4$,
\begin{equation}\label{Sect3_Velocity_15}
\begin{array}{ll}
\frac{\mathrm{d}}{\mathrm{d}t} \int\limits_{\mathbb{R}^3}|\mathcal{D}^{\alpha}v|^2 \,\mathrm{d}x
+ \frac{\gamma}{a}\int\limits_{\mathbb{R}^3} \frac{p}{\varrho}|\mathcal{D}^{\alpha}\nabla\cdot v|^2\,\mathrm{d}x
\lem \sqrt{\ve}\mathcal{F}[v](t).
\end{array}
\end{equation}

By $(\ref{Sect3_Velocity_9})$ and $(\ref{Sect3_Velocity_15})$, we have that when $0\leq |\alpha|\leq 4$,
\begin{equation}\label{Sect3_Velocity_16}
\begin{array}{ll}
\frac{\mathrm{d}}{\mathrm{d}t} \sum\limits_{0\leq|\alpha|\leq 4}
\int\limits_{\mathbb{R}^3}|\mathcal{D}^{\alpha}v|^2 \,\mathrm{d}x
+ \frac{\gamma}{a}\sum\limits_{0\leq|\alpha|\leq 4}
\int\limits_{\mathbb{R}^3} \frac{p}{\varrho}|\mathcal{D}^{\alpha}\nabla\cdot v|^2\,\mathrm{d}x
\lem \sqrt{\ve}\mathcal{F}[v](t).
\end{array}
\end{equation}

Next, we estimate the $L^{\infty}$ bound of $\|v_t\|_{H^2(\mathbb{R}^3)}$: 

Let $\mathcal{D}^{\alpha} v_t\cdot\mathcal{D}^{\alpha}\partial_t(\ref{Sect2_Velocity_Solve})$,
where $0\leq|\alpha|\leq 2$, we get
\begin{equation}\label{Sect3_Velocity_T_2}
\begin{array}{ll}
\partial_t|\mathcal{D}^{\alpha} v_t|^2 = 2\mathcal{D}^{\alpha} v_t\cdot\mathcal{D}^{\alpha}\partial_t
[k_1(1-\gamma)v(\nabla\cdot v)- k_1 v\cdot\nabla v - \frac{k_1}{2} \nabla(|v|^2)] \\[7pt]\hspace{1.9cm}

+ \frac{2\gamma}{a}\sum\limits_{|\alpha_1|>0}
[\mathcal{D}^{\alpha_1}(\frac{p}{\varrho})\mathcal{D}^{\alpha_2}\nabla(\nabla\cdot v_t)]\cdot\mathcal{D}^{\alpha} v_t \\[10pt]\hspace{1.9cm}
+ \frac{2\gamma}{a}\sum\limits_{|\alpha_1|\geq 0}
[\mathcal{D}^{\alpha_1}\partial_t(\frac{p}{\varrho})\mathcal{D}^{\alpha_2}\nabla(\nabla\cdot v)]\cdot\mathcal{D}^{\alpha} v_t \\[10pt]\hspace{1.9cm}
+ \frac{2\gamma p}{a\varrho}\mathcal{D}^{\alpha}\nabla(\nabla\cdot v_t)\cdot\mathcal{D}^{\alpha} v_t.
\end{array}
\end{equation}

Integrate $(\ref{Sect3_Velocity_T_2})$ in $\mathbb{R}^3$, we get
\begin{equation}\label{Sect3_Velocity_T_3}
\begin{array}{ll}
\frac{\mathrm{d}}{\mathrm{d}t}\int\limits_{\mathbb{R}^3}|\mathcal{D}^{\alpha} v_t|^2 \,\mathrm{d}x
\\[10pt]
= 2\int\limits_{\mathbb{R}^3}\mathcal{D}^{\alpha} v_t\cdot\mathcal{D}^{\alpha}\partial_t
[k_1(1-\gamma)v(\nabla\cdot v)- k_1 v\cdot\nabla v
- \frac{k_1}{2} \nabla(|v|^2)]\,\mathrm{d}x \\[10pt]\quad

+ \frac{2\gamma}{a}\int\limits_{\mathbb{R}^3}\sum\limits_{|\alpha_1|>0}
[\mathcal{D}^{\alpha_1}(\frac{p}{\varrho})\mathcal{D}^{\alpha_2}\nabla(\nabla\cdot v_t)]\cdot
\mathcal{D}^{\alpha} v_t \,\mathrm{d}x
\\[10pt]\quad
+ \frac{2\gamma}{a}\int\limits_{\mathbb{R}^3}\sum\limits_{|\alpha_1|\geq 0}
[\mathcal{D}^{\alpha_1}\partial_t(\frac{p}{\varrho})\mathcal{D}^{\alpha_2}\nabla(\nabla\cdot v)]\cdot
\mathcal{D}^{\alpha} v_t \,\mathrm{d}x
\\[10pt]\quad

+ \frac{2\gamma}{a}\int\limits_{\mathbb{R}^3}\frac{p}{\varrho}\mathcal{D}^{\alpha}\nabla(\nabla\cdot v_t)
\cdot\mathcal{D}^{\alpha} v_t \,\mathrm{d}x := I_4.
\end{array}
\end{equation}

When $0\leq|\alpha|\leq 1$,
\begin{equation}\label{Sect3_Velocity_T_8}
\begin{array}{ll}
\frac{2\gamma}{a}\int\limits_{\mathbb{R}^3} \frac{p}{\varrho}\mathcal{D}^{\alpha}v_t\cdot\nabla(\nabla\cdot \mathcal{D}^{\alpha}v_t)\,\mathrm{d}x \\[10pt]
= - \frac{2\gamma}{a}\int\limits_{\mathbb{R}^3} \frac{p}{\varrho}|\nabla\cdot \mathcal{D}^{\alpha}v_t|^2\,\mathrm{d}x
- \frac{2\gamma}{a}\int\limits_{\mathbb{R}^3} (\nabla\cdot \mathcal{D}^{\alpha}v_t)\mathcal{D}^{\alpha}v_t\cdot\nabla(\frac{p}{\varrho}) \,\mathrm{d}x \\[10pt]
\leq C\sqrt{\ve}\mathcal{F}[v](t) - \frac{2\gamma}{a}\int\limits_{\mathbb{R}^3} \frac{p}{\varrho}|\nabla\cdot \mathcal{D}^{\alpha}v_t|^2\,\mathrm{d}x.
\end{array}
\end{equation}

Then
\begin{equation}\label{Sect3_Velocity_T_9}
\begin{array}{ll}
\frac{\mathrm{d}}{\mathrm{d}t} \int\limits_{\mathbb{R}^3}|\mathcal{D}^{\alpha}v_t|^2 \,\mathrm{d}x
+ \frac{2\gamma}{a}\int\limits_{\mathbb{R}^3} \frac{p}{\varrho}|\nabla\cdot \mathcal{D}^{\alpha}v_t|^2\,\mathrm{d}x
\lem \sqrt{\ve}\mathcal{F}[v](t).
\end{array}
\end{equation}

When $|\alpha|=2$, we estimate each term of $I_4$ separately. Assume the positive constants $\Lambda_{1,\alpha,1},\Lambda_{1,\alpha,2},\Lambda_{1,\alpha,3},\Lambda_{1,\alpha,4}$ are so small that $\sum\limits_{j=1}^{4} \Lambda_{1,\alpha,j}\leq \frac{\gamma}{a}$.

The first term of $I_4$:
\begin{equation}\label{Sect3_Velocity_T_10}
\begin{array}{ll}
2k_1(1-\gamma)\int\limits_{\mathbb{R}^3}\mathcal{D}^{\alpha} v_t\cdot\mathcal{D}^{\alpha}\partial_t
[v(\nabla\cdot v)]\,\mathrm{d}x \\[9pt]
=2k_1(1-\gamma)\left(\int\limits_{\mathbb{R}^3}\mathcal{D}^{\alpha} v_t\cdot\mathcal{D}^{\alpha}
[v_t(\nabla\cdot v)]\,\mathrm{d}x 
+\int\limits_{\mathbb{R}^3}\mathcal{D}^{\alpha} v_t\cdot\mathcal{D}^{\alpha}
[v(\nabla\cdot v_t)]\,\mathrm{d}x\right) \\[9pt]

\leq C\sqrt{\ve}\mathcal{F}[v](t)
+ \Lambda_{1,\alpha,1}\int\limits_{\mathbb{R}^3}\frac{p}{\varrho}|\nabla\cdot \mathcal{D}^{\alpha}v_t|^2\,\mathrm{d}x
+ C\int\limits_{\mathbb{R}^3}\frac{\varrho}{p}(v\cdot \mathcal{D}^{\alpha}v_t)^2\,\mathrm{d}x

\\[6pt]
\leq C\sqrt{\ve}\mathcal{F}[v](t)
+ \Lambda_{1,\alpha,1}\int\limits_{\mathbb{R}^3}\frac{p}{\varrho}|\nabla\cdot \mathcal{D}^{\alpha}v_t|^2\,\mathrm{d}x.
\end{array}
\end{equation}

The second term of $I_4$:
\begin{equation}\label{Sect3_Velocity_T_11}
\begin{array}{ll}
-2k_1 \int\limits_{\mathbb{R}^3}\mathcal{D}^{\alpha} v_t\cdot\mathcal{D}^{\alpha}\partial_t
(v\cdot\nabla v)\,\mathrm{d}x \\[9pt]

-2k_1 \left(\int\limits_{\mathbb{R}^3}\mathcal{D}^{\alpha} v_t\cdot\mathcal{D}^{\alpha}
(v_t\cdot\nabla v)\,\mathrm{d}x
+\int\limits_{\mathbb{R}^3}\mathcal{D}^{\alpha} v_t\cdot\mathcal{D}^{\alpha}
(v\cdot\nabla v_t)\,\mathrm{d}x \right) \\[12pt]

\leq C\sqrt{\ve}\mathcal{F}[v](t) + k_1\int\limits_{\mathbb{R}^3}\nabla\cdot v |\mathcal{D}^{\alpha}v_t|^2\,\mathrm{d}x \\[9pt]
\leq C\sqrt{\ve}\mathcal{F}[v](t).
\end{array}
\end{equation}

The third term of $I_4$:
\begin{equation}\label{Sect3_Velocity_T_12}
\begin{array}{ll}
-k_1\int\limits_{\mathbb{R}^3}\mathcal{D}^{\alpha} v_t\cdot\mathcal{D}^{\alpha}\partial_t \nabla(|v|^2)\,\mathrm{d}x 
= 2k_1\int\limits_{\mathbb{R}^3}\nabla\cdot \mathcal{D}^{\alpha}v_t \mathcal{D}^{\alpha}(v\cdot v_t)\,\mathrm{d}x \\[9pt]

\leq \Lambda_{1,\alpha,2}\int\limits_{\mathbb{R}^3}\frac{p}{\varrho}|\nabla\cdot \mathcal{D}^{\alpha}v_t|^2\,\mathrm{d}x
+ C\int\limits_{\mathbb{R}^3}\frac{\varrho}{p}|\mathcal{D}^{\alpha}(v\cdot v_t)|^2\,\mathrm{d}x \\[9pt]

\leq C\sqrt{\ve}\mathcal{F}[v](t)
+ \Lambda_{1,\alpha,2}\int\limits_{\mathbb{R}^3}\frac{p}{\varrho}|\nabla\cdot \mathcal{D}^{\alpha}v|^2\,\mathrm{d}x.
\end{array}
\end{equation}

The fourth term of $I_4$:
\begin{equation}\label{Sect3_Velocity_T_13}
\begin{array}{ll}
\frac{2\gamma}{a}\int\limits_{\mathbb{R}^3}\sum\limits_{|\alpha_1|>0}
[\mathcal{D}^{\alpha_1}(\frac{p}{\varrho})\mathcal{D}^{\alpha_2}\nabla(\nabla\cdot v_t)]\cdot
\mathcal{D}^{\alpha} v_t \,\mathrm{d}x
\\[14pt]

\leq \frac{2\gamma}{a}\sum\limits_{|\alpha_1|=1}
|\mathcal{D}^{\alpha_1}(\frac{p}{\varrho})|_{\infty}
\|\mathcal{D}^{\alpha_2}\nabla(\nabla\cdot v_t)\|_{L^2(\mathbb{R}^3)}
\|\mathcal{D}^{\alpha} v\|_{L^2(\mathbb{R}^3)} \\[10pt]\quad
+\frac{2\gamma}{a}
|\mathcal{D}^{\alpha}(\frac{p}{\varrho})|_{\infty}
\|\nabla(\nabla\cdot v_t)\|_{L^2(\mathbb{R}^3)}
\|\mathcal{D}^{\alpha} v\|_{L^2(\mathbb{R}^3)} \\[10pt]

\leq C\sqrt{\ve}\mathcal{F}[v](t)
+ \Lambda_{1,\alpha,3}\int\limits_{\mathbb{R}^3}\frac{p}{\varrho}|\nabla\cdot \mathcal{D}^{\alpha}v_t|^2\,\mathrm{d}x.
\end{array}
\end{equation}

The fifth term of $I_4$:
\begin{equation}\label{Sect3_Velocity_T_14}
\begin{array}{ll}
\frac{2\gamma}{a}\int\limits_{\mathbb{R}^3}\sum\limits_{|\alpha_1|\geq 0}
[\mathcal{D}^{\alpha_1}\partial_t(\frac{p}{\varrho})\mathcal{D}^{\alpha_2}\nabla(\nabla\cdot v)]\cdot
\mathcal{D}^{\alpha} v_t \,\mathrm{d}x \\[14pt]

\leq \frac{2\gamma}{a}
|\partial_t(\frac{p}{\varrho})|_{\infty}
\|\mathcal{D}^{\alpha}\nabla(\nabla\cdot v)\|_{L^2(\mathbb{R}^3)}
\|\mathcal{D}^{\alpha} v\|_{L^2(\mathbb{R}^3)} \\[10pt]\quad
+ \frac{2\gamma}{a}\sum\limits_{|\alpha_1|=1}
\|\mathcal{D}^{\alpha_1}\partial_t(\frac{p}{\varrho})\|_{L^4(\mathbb{R}^3)}
\|\mathcal{D}^{\alpha_2}\nabla(\nabla\cdot v)\|_{L^4(\mathbb{R}^3)}
\|\mathcal{D}^{\alpha} v\|_{L^2(\mathbb{R}^3)} \\[13pt]\quad
+\frac{2\gamma}{a}|\nabla(\nabla\cdot v)|_{\infty}
\|\mathcal{D}^{\alpha}\partial_t(\frac{p}{\varrho})\|_{L^2(\mathbb{R}^3)}
\|\mathcal{D}^{\alpha} v\|_{L^2(\mathbb{R}^3)} 
\\[9pt]
\leq C\sqrt{\ve}\mathcal{F}[v](t).
\end{array}
\end{equation}

The sixth term of $I_4$:
\begin{equation}\label{Sect3_Velocity_T_15}
\begin{array}{ll}
\frac{2\gamma}{a}\int\limits_{\mathbb{R}^3}\frac{p}{\varrho}\mathcal{D}^{\alpha}\nabla(\nabla\cdot v_t)
\cdot\mathcal{D}^{\alpha} v_t \,\mathrm{d}x \\[10pt]

= - \frac{2\gamma}{a}\int\limits_{\mathbb{R}^3} \frac{p}{\varrho}|\nabla\cdot \mathcal{D}^{\alpha}v_t|^2\,\mathrm{d}x
- \frac{2\gamma}{a}\int\limits_{\mathbb{R}^3} (\nabla\cdot \mathcal{D}^{\alpha}v_t)\mathcal{D}^{\alpha}v_t\cdot\nabla(\frac{p}{\varrho}) \,\mathrm{d}x \\[10pt]

\leq C\sqrt{\ve}\mathcal{F}[v] +(-\frac{2\gamma}{a}+\Lambda_{1,\alpha,4})\int\limits_{\mathbb{R}^3} \frac{p}{\varrho}|\nabla\cdot \mathcal{D}^{\alpha}v_t|^2\,\mathrm{d}x
+ C\int\limits_{\mathbb{R}^3} [\mathcal{D}^{\alpha}v_t\cdot\nabla(\frac{p}{\varrho})]^2 \,\mathrm{d}x
 \\
\leq C\sqrt{\ve}\mathcal{F}[v] +(-\frac{2\gamma}{a}+\Lambda_{1,\alpha,4})\int\limits_{\mathbb{R}^3} \frac{p}{\varrho}|\nabla\cdot \mathcal{D}^{\alpha}v_t|^2\,\mathrm{d}x.
\end{array}
\end{equation}

By $(\ref{Sect3_Velocity_T_10})+(\ref{Sect3_Velocity_T_11})+(\ref{Sect3_Velocity_T_12})+(\ref{Sect3_Velocity_T_13})+
(\ref{Sect3_Velocity_T_14})+(\ref{Sect3_Velocity_T_15})$, we have that when $|\alpha|=2$,
\begin{equation}\label{Sect3_Velocity_T_16}
\begin{array}{ll}
\frac{\mathrm{d}}{\mathrm{d}t} \int\limits_{\mathbb{R}^3}|\mathcal{D}^{\alpha}v_t|^2 \,\mathrm{d}x
+ \frac{\gamma}{a}\int\limits_{\mathbb{R}^3} \frac{p}{\varrho}|\mathcal{D}^{\alpha}\nabla\cdot v_t|^2\,\mathrm{d}x
\lem \sqrt{\ve}\mathcal{F}[v](t).
\end{array}
\end{equation}

By $(\ref{Sect3_Velocity_T_9})$ and $(\ref{Sect3_Velocity_T_16})$, we have that when $0\leq |\alpha|\leq 2$,
\begin{equation}\label{Sect3_Velocity_T_17}
\begin{array}{ll}
\frac{\mathrm{d}}{\mathrm{d}t} \sum\limits_{0\leq|\alpha|\leq 2}\
\int\limits_{\mathbb{R}^3}|\mathcal{D}^{\alpha}v_t|^2 \,\mathrm{d}x
+ \frac{\gamma}{a}\sum\limits_{0\leq|\alpha|\leq 2}\
\int\limits_{\mathbb{R}^3} \frac{p}{\varrho}|\mathcal{D}^{\alpha}\nabla\cdot v_t|^2\,\mathrm{d}x
\lem \sqrt{\ve}\mathcal{F}[v](t).
\end{array}
\end{equation}

Sum $(\ref{Sect3_Velocity_16})+(\ref{Sect3_Velocity_T_17})$, we obtain $(\ref{Sect3_Velocity_toProve})$.
Thus, Lemma $\ref{Sect3_Velocity_Lemma}$ is proved.
\end{proof}

The structure of the equations $(\ref{Sect2_Relaxing_Eq})$ implies
$\mathcal{F}_X[\xi](t)$ can be estimates by $\mathcal{F}[v](t)$, as the following lemma stated:
\begin{lemma}\label{Sect3_Energy_Ineq}
For any given $T\in (0,+\infty)$, if
\begin{equation*}
\sup\limits_{0\leq t\leq T} \mathcal{F}[\xi,v,\phi,\zeta](t) \leq\ve,
\end{equation*}
there exists $\ve_2>0$, which is independent of $(\xi_0,v_0,\phi_0)$, such that if $0<\ve\ll \min\{1,\ve_2\}$, then for $\forall t\in [0,T]$,
\begin{equation}\label{Sect3_Energy_Ineq_toProve}
\mathcal{F}_X[\xi](t) \leq c_3\mathcal{F}[v](t),
\end{equation}
for some $c_3>0$.
\end{lemma}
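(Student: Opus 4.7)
The plan is to exploit the fact that in the relaxed system \eqref{Sect2_Relaxed_Eq} the second equation is purely algebraic and can be solved for $\nabla\xi$ in terms of $v$; substituting this into the first equation then expresses $\xi_t$ in terms of $v$ and its spatial derivatives. This reduces the estimate for $\mathcal{F}_X[\xi](t)$ to the analogous bounds for $v$ plus absorbable nonlinear errors.

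First I would rewrite the second equation of \eqref{Sect2_Relaxed_Eq} as $[k_2 - \frac{1}{k_1\bar\varrho} + \frac{1}{k_1\varrho}]\nabla\xi = -v$. Using $k_1 k_2 = 1/\bar\varrho$, which follows from the definitions of $k_1, k_2$, this collapses to the pointwise identity $\nabla\xi = -k_1\varrho v$. Plugging it into the first equation of \eqref{Sect2_Relaxed_Eq} gives
\begin{equation*}
\xi_t = -k_2\nabla\cdot v - \gamma k_1\,\xi\,\nabla\cdot v + k_1^2\,\varrho\,|v|^2,
\end{equation*}
so $\xi_t$ is quasilinear in $v$ with coefficients involving only the small quantities $\xi,\zeta$ and background constants.

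Next I would apply $\mathcal{D}^{\alpha'}$ (with $|\alpha'| = |\alpha|-1 \leq 3$) to $\nabla\xi = -k_1\varrho v$ in order to control $\|\mathcal{D}^\alpha\xi\|_{L^2(\mathbb{R}^3)}$ for $1\leq |\alpha|\leq 4$, and $\mathcal{D}^\alpha$ (with $|\alpha|\leq 2$) to the displayed expression for $\xi_t$ in order to control $\|\mathcal{D}^\alpha\xi_t\|_{L^2(\mathbb{R}^3)}$. The leading pieces $-k_1\bar\varrho\,\mathcal{D}^{\alpha'}v$ and $-k_2\,\mathcal{D}^\alpha\nabla\cdot v$ are directly dominated by $\mathcal{F}[v](t)$, since $\mathcal{F}[v]$ carries up to four spatial derivatives of $v$ in $L^2$ (which covers the $\mathcal{D}^3 v$ that appears when $|\alpha|=2$ on $\xi_t$ and when $|\alpha|=4$ on $\xi$). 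Every remaining term in the Leibniz expansions is a product of derivatives of the small variables $\zeta$, $\xi$ (including derivatives of $1/\varrho$ expanded via the chain rule) with derivatives of $v$; using Lemma \ref{Sect3_Varrho_Lemma}, the Sobolev embedding $H^2(\mathbb{R}^3)\hookrightarrow L^\infty$, and the Gagliardo--Nirenberg type inequalities \eqref{Sect2_Sobolev_Ineq}, each such product is bounded by $C\sqrt\ve\bigl(\sqrt{\mathcal{F}_X[\xi](t)}+\sqrt{\mathcal{F}[v](t)}\bigr)$.

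Summing over the relevant multi-indices would then produce
\begin{equation*}
\mathcal{F}_X[\xi](t) \leq C\,\mathcal{F}[v](t) + C\ve\bigl(\mathcal{F}_X[\xi](t) + \mathcal{F}[v](t)\bigr).
\end{equation*}
Choosing $\ve_2>0$ small enough that $C\ve_2\leq \tfrac{1}{2}$ allows the $\mathcal{F}_X[\xi](t)$ contribution on the right to be absorbed into the left, yielding $\mathcal{F}_X[\xi](t)\leq c_3\,\mathcal{F}[v](t)$ with $c_3 = 2C$. There is no substantive obstacle; the point requiring care is the top-order Leibniz accounting in the first bound, where one exactly needs $\mathcal{D}^3 v \in L^2(\mathbb{R}^3)$, which is the precise regularity furnished by $\mathcal{F}[v]$. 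If $\mathcal{F}[v]$ controlled one fewer spatial derivative, the scheme would break at $|\alpha|=4$, so the indices in Definition \ref{Sect2_Def_Energy_RelaxedEq} are tuned specifically to make this lemma close.
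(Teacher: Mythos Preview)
Your proposal is correct and follows essentially the same route as the paper: both exploit the algebraic identity $\nabla\xi=-k_1\varrho v$ to control $\|\mathcal{D}^{\alpha}\xi\|_{L^2}$ for $1\le|\alpha|\le4$, and both use the evolution equation for $\xi_t$ to control $\|\mathcal{D}^{\alpha}\xi_t\|_{L^2}$ for $|\alpha|\le2$, then absorb the quadratic errors via the smallness of $\ve$. The only cosmetic difference is that you first substitute $\nabla\xi=-k_1\varrho v$ into the $\xi_t$ equation (turning $-k_1 v\cdot\nabla\xi$ into $k_1^2\varrho|v|^2$), whereas the paper leaves $v\cdot\nabla\xi$ unsubstituted and bounds it by $\mathcal{F}_X[\xi]\mathcal{F}[v]$; either way the absorption goes through with the same choice of $\ve_2$ and $c_3$.
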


\begin{proof}
Apply $\mathcal{D}^{\alpha}$ to $\nabla\xi = - k_1\varrho v$, where $0\leq |\alpha|\leq 3$, we get
\begin{equation}\label{Sect3_Energy_Ineq_1}
\begin{array}{ll}
\mathcal{D}^{\alpha}\nabla\xi = - k_1 \mathcal{D}^{\alpha}(\varrho v), \\[6pt]

\|\mathcal{D}^{\alpha}\nabla\xi\|_{L^2(\mathbb{R}^3)}^2 \lem
\|\mathcal{D}^{\alpha}(\varrho v)\|_{L^2(\mathbb{R}^3)}^2 
\lem \mathcal{F}[v](t) + \mathcal{F}[\zeta]\mathcal{F}[v](t).
\end{array}
\end{equation}

Apply $\mathcal{D}^{\alpha}$ to $\xi_t = - k_1v\cdot\nabla\xi - k_1\gamma p\nabla\cdot v$,
where $0\leq|\alpha|\leq 2$, we get
\begin{equation}\label{Sect3_Energy_Ineq_2}
\begin{array}{ll}
\mathcal{D}^{\alpha}\xi_t = - k_1\mathcal{D}^{\alpha}(v\cdot\nabla\xi)
- k_1\gamma \mathcal{D}^{\alpha}(p\nabla\cdot v), \\[6pt]

\|\mathcal{D}^{\alpha}\xi_t\|_{L^2(\mathbb{R}^3)}^2 \lem
\|\mathcal{D}^{\alpha}(v\cdot\nabla\xi)\|_{L^2(\mathbb{R}^3)}^2
+ \|\mathcal{D}^{\alpha}(p \nabla\cdot v)\|_{L^2(\mathbb{R}^3)}^2 \\[6pt]\hspace{2.05cm}

\lem \mathcal{F}[v](t)+ \mathcal{F}_X[\xi]\mathcal{F}[v](t).
\end{array}
\end{equation}

By $(\ref{Sect3_Energy_Ineq_1})+(\ref{Sect3_Energy_Ineq_2})$, we have
\begin{equation}\label{Sect3_Energy_Ineq_3}
\begin{array}{ll}
\mathcal{F}_X[\xi](t) \leq C_1\mathcal{F}[v](t) + C_1\mathcal{F}[\zeta]\mathcal{F}[v](t)
+ C_1\mathcal{F}_X[\xi]\mathcal{F}[v](t) \\[5pt]\hspace{1.3cm}
\leq C_1\mathcal{F}_X[\xi]\mathcal{F}[v](t)
+ C_1(1+ \ve)\mathcal{F}[v](t).
\end{array}
\end{equation}
for some $C_1>0$.

Let $\ve_2\leq \frac{1}{2C_1}$, $c_3=2C_1(1+\ve_2)\leq 2C_1+1$,
then $\mathcal{F}_X[\xi](t) \leq c_3\mathcal{F}[v](t)$.
Thus, Lemma $\ref{Sect3_Energy_Ineq}$ is proved.
\end{proof}

\begin{remark}\label{Sect3_Higher_Order_Xi_Remark}
The proof of Lemma $\ref{Sect3_Energy_Ineq}$ indicates that under the small data assumption 
\begin{equation*}
\sup\limits_{0\leq t\leq T} \mathcal{F}[\xi,v,\phi,\zeta](t) \leq\e,
\end{equation*}
for some sufficiently small $\ve$, we have
\begin{equation*}
\begin{array}{ll}
\|\nabla\xi\|_{H^4(\mathbb{R}^3)}^2
+\|\xi_t\|_{H^3(\mathbb{R}^3)}^2 \lem \mathcal{F}[v](t)+\mathcal{F}[\zeta](t), \\[6pt]
\mathcal{F}[v](t)\lem \tilde{\mathcal{F}}[\xi](t)+\mathcal{F}[\zeta](t),
\end{array}
\end{equation*}
for $\forall t\in [0,T]$.
\end{remark}

The following lemma gives not only the $L^{\infty}$ bounds of $\mathcal{F}_1[\xi](t),\mathcal{F}[v](t)$, but also the bound of $\int\limits_{0}^{T} \mathcal{F}[\nabla\cdot v](s)\,\mathrm{d}s$.
\begin{lemma}\label{Sect3_Decay_Lemma}
For any given $T\in (0,+\infty)$, there exists $\ve_3>0$ which is independent of $(\xi_0,v_0,\phi_0)$, such that if
\begin{equation*}
\sup\limits_{0\leq t\leq T} \mathcal{F}[\xi,v,\phi,\zeta](t) \leq\ve,
\end{equation*}
where $\ve\ll \min\{1,\ve_1,\ve_2,\ve_3\}$, then for $\forall t\in [0,T]$,
\begin{equation}\label{Sect3_Decay_Estimates_toProve}
\begin{array}{ll}
\mathcal{F}[\xi](t)+\mathcal{F}[v](t)\leq \beta_1(\|\xi_0\|_{H^5(\mathbb{R}^3)}+\|\phi_0\|_{H^4(\mathbb{R}^3)}), \\[6pt]
\int\limits_{0}^{T} \mathcal{F}[v](s)\,\mathrm{d}s 
+\int\limits_{0}^{T} \mathcal{F}[\nabla\cdot v](s)\,\mathrm{d}s 
\leq \beta_2(\|\xi_0\|_{H^5(\mathbb{R}^3)}+\|\phi_0\|_{H^4(\mathbb{R}^3)}),
\end{array}
\end{equation}
for some $\beta_1>0,\beta_2>0$.
\end{lemma}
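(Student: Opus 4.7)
The plan is to chain the previous four lemmas into a single dissipative differential inequality and then integrate. Summing the estimates of Lemma \ref{Sect3_Energy_Estimate_Lemma2} and Lemma \ref{Sect3_Velocity_Lemma} yields
\begin{equation*}
\frac{\mathrm{d}}{\mathrm{d}t}\bigl(\mathcal{F}_1[\xi] + \mathcal{F}[v]\bigr) + 2\mathcal{F}[v] + \frac{\gamma}{a}\int_{\mathbb{R}^3}\frac{p}{\varrho}\Bigl(\sum_{0\leq|\alpha|\leq 4}|\nabla\cdot\mathcal{D}^{\alpha}v|^2 + \sum_{0\leq|\alpha|\leq 2}|\nabla\cdot\mathcal{D}^{\alpha}v_t|^2\Bigr)\,\mathrm{d}x \leq C\sqrt{\ve}\bigl(\mathcal{F}_X[\xi] + \mathcal{F}[v]\bigr).
\end{equation*}
Inserting the algebraic control $\mathcal{F}_X[\xi] \leq c_3\mathcal{F}[v]$ from Lemma \ref{Sect3_Energy_Ineq} bounds the right-hand side by $C\sqrt{\ve}(c_3+1)\mathcal{F}[v]$. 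I would then choose $\ve_3$ so small that $C\sqrt{\ve_3}(c_3+1)\leq 1$, which lets the right-hand side be absorbed into the $2\mathcal{F}[v]$ dissipation on the left, producing an inequality in which every surviving term is nonnegative.

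Integrating the resulting inequality over $[0,t]$ and invoking the equivalence $c_1\mathcal{F}[\xi]\leq\mathcal{F}_1[\xi]\leq c_2\mathcal{F}[\xi]$ of Lemma \ref{Sect3_Epsilon0_Lemma} simultaneously yields the uniform bound on $\mathcal{F}[\xi](t)+\mathcal{F}[v](t)$ and the integral bound $\int_0^T \mathcal{F}[v](s) + \int_0^T\mathcal{F}[\nabla\cdot v](s)\,\mathrm{d}s$, each controlled by $\mathcal{F}_1[\xi](0)+\mathcal{F}[v](0)$. For the integral of $\mathcal{F}[\nabla\cdot v]$, under the smallness hypothesis Lemma \ref{Sect3_Epsilon0_Lemma} forces $|\xi|_{\infty}\leq\bar p/3$ and Lemma \ref{Sect3_Varrho_Lemma} forces $|\zeta|_{\infty}\leq\bar\varrho/2$, so the weight $p/\varrho$ is bounded above and below by positive constants, converting the weighted $L^2$ norms on the left into the unweighted $\mathcal{F}[\nabla\cdot v]$.

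The final step is to dominate the initial energy $\mathcal{F}_1[\xi](0)+\mathcal{F}[v](0)$ by $\|\xi_0\|_{H^5(\mathbb{R}^3)}^2 + \|\phi_0\|_{H^4(\mathbb{R}^3)}^2$. Since $(\xi,v,\phi)$ solves the relaxed system \eqref{Sect2_Relaxed_Eq}, $v_0$ is not freely prescribed but recovered from $k_2\nabla\xi_0 + v_0 = \frac{1}{k_1}(\frac{1}{\bar\varrho}-\frac{1}{\varrho_0})\nabla\xi_0$, so $v_0\in H^4(\mathbb{R}^3)$ exactly when $\xi_0\in H^5(\mathbb{R}^3)$. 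Time derivatives at $t=0$ are then obtained algebraically: $\partial_t\xi|_{t=0}$ and $\partial_t\phi|_{t=0}$ come from $(\ref{Sect2_Relaxed_Eq})_1$ and $(\ref{Sect2_Relaxed_Eq})_3$; $\partial_t v|_{t=0}$ from equation \eqref{Sect2_Velocity_Solve}, which trades one time derivative for two spatial derivatives of $v$, i.e.\ three spatial derivatives of $\xi_0$; and $\partial_t^2$ is recovered by differentiating once more and applying the same substitutions. A careful derivative count shows that at most five spatial derivatives of $\xi_0$ and four of $\phi_0$ ever appear, and the nonlinear products are absorbed by the smallness assumption.

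The main obstacle is the bookkeeping in this last paragraph: verifying that the substitutions for $\partial_t^\ell$ at $t=0$ never overshoot the prescribed regularity $H^5\times H^4$ and that all nonlinear cross terms remain under control by $\ve$. Once this is checked, the dissipation inequality together with the norm equivalence closes the argument and produces the constants $\beta_1,\beta_2>0$.
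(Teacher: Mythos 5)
Your proposal is correct and follows essentially the same route as the paper: sum the dissipation estimates of Lemmas~\ref{Sect3_Energy_Estimate_Lemma2} and~\ref{Sect3_Velocity_Lemma}, absorb the right-hand side via Lemma~\ref{Sect3_Energy_Ineq} with $\ve_3$ chosen small enough, integrate in time, and use the norm equivalence of Lemma~\ref{Sect3_Epsilon0_Lemma} together with the uniform positivity of $p/\varrho$. The paper is terser on the initial-data bound $\mathcal{F}_1[\xi](0)+\mathcal{F}[v](0)\leq C_3(\|\xi_0\|_{H^5(\mathbb{R}^3)}^2+\|\phi_0\|_{H^4(\mathbb{R}^3)}^2)$, which it states without the derivative-count bookkeeping you spell out, but the substance of the argument is the same.
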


\begin{proof}
In view of Lemmas $\ref{Sect3_Energy_Estimate_Lemma2}$, $\ref{Sect3_Velocity_Lemma}$ and $\ref{Sect3_Energy_Ineq}$, we have a priori estimates as follows:
\begin{equation}\label{Sect3_Obtained_A_Priori_Estimates}
\left\{\begin{array}{ll}
\frac{\mathrm{d}}{\mathrm{d}t}\mathcal{F}_1[\xi](t)
+ 2 \mathcal{F}_1[v](t) \leq C_2\sqrt{\e}(\mathcal{F}_X[\xi](t) + \mathcal{F}[v](t)), \\[13pt]

\frac{\mathrm{d}}{\mathrm{d}t} \mathcal{F}[v](t)
+ \frac{\gamma}{a}\int\limits_{\mathbb{R}^3} \frac{p}{\varrho}\sum\limits_{0\leq|\alpha|\leq 4}|\nabla\cdot \mathcal{D}^{\alpha}v|^2\,\mathrm{d}x \\[14pt]\hspace{1.5cm}
+ \frac{\gamma}{a}\int\limits_{\mathbb{R}^3} \frac{p}{\varrho}\sum\limits_{0\leq|\alpha|\leq 2}|\nabla\cdot \mathcal{D}^{\alpha}v_t|^2\,\mathrm{d}x 
\leq C_2\sqrt{\ve}\mathcal{F}[v](t), \\[15pt]

\mathcal{F}_X[\xi](t) \leq c_3 \mathcal{F}[v](t).
\end{array}\right.
\end{equation}

By $(\ref{Sect3_Obtained_A_Priori_Estimates})_1+(\ref{Sect3_Obtained_A_Priori_Estimates})_2$, we get
\begin{equation}\label{Sect3_Decay_1}
\begin{array}{ll}
\frac{\mathrm{d}}{\mathrm{d}t}(\mathcal{F}_1[\xi](t) + \mathcal{F}[v](t))
+ 2 \mathcal{F}[v](t) \\[8pt]\quad
+ \frac{\gamma}{a}\int\limits_{\mathbb{R}^3} \frac{p}{\varrho}\sum\limits_{0\leq|\alpha|\leq 4}|\nabla\cdot \mathcal{D}^{\alpha}v|^2\,\mathrm{d}x
+ \frac{\gamma}{a}\int\limits_{\mathbb{R}^3} \frac{p}{\varrho}\sum\limits_{0\leq|\alpha|\leq 2}|\nabla\cdot \mathcal{D}^{\alpha}v_t|^2\,\mathrm{d}x
\\[12pt]
\leq C_2\sqrt{\e}(\mathcal{F}_X[\xi](t) + \mathcal{F}[v](t)).
\end{array}
\end{equation}

plug $(\ref{Sect3_Obtained_A_Priori_Estimates})_3$ into $(\ref{Sect3_Decay_1})$, we get
\begin{equation}\label{Sect3_Decay_2}
\begin{array}{ll}
[R.H.S.\ of\ (\ref{Sect3_Decay_1})] \leq C_2(1+c_3)\sqrt{\e}\mathcal{F}[v](t).
\end{array}
\end{equation}

Take $\ve_3 =\frac{1}{C_2^2(1+c_3)^2}$, when $\ve\leq \ve_3$, we have
\begin{equation}\label{Sect3_Decay_3}
\begin{array}{ll}
\frac{\mathrm{d}}{\mathrm{d}t}(\mathcal{F}_1[\xi](t) + \mathcal{F}[v](t))
+ \mathcal{F}[v](t) \\[8pt]\quad
+ \frac{\gamma}{a}\int\limits_{\mathbb{R}^3} \frac{p}{\varrho}\sum\limits_{0\leq|\alpha|\leq 4}|\nabla\cdot \mathcal{D}^{\alpha}v|^2\,\mathrm{d}x
+ \frac{\gamma}{a}\int\limits_{\mathbb{R}^3} \frac{p}{\varrho}\sum\limits_{0\leq|\alpha|\leq 2}|\nabla\cdot \mathcal{D}^{\alpha}v_t|^2\,\mathrm{d}x \leq 0.
\end{array}
\end{equation}

Integrate $(\ref{Sect3_Decay_3})$ in $\mathbb{R}^3$, when $t\in [0,T]$, 
\begin{equation}\label{Sect3_Decay_4}
\begin{array}{ll}
\mathcal{F}_1[\xi](t) + \mathcal{F}[v](t) + \int\limits_0^{t}\mathcal{F}[v](s)\,\mathrm{d}s
\\[8pt]\quad

+ \frac{\gamma}{a}\int\limits_{0}^{t}\int\limits_{\mathbb{R}^3} 
\frac{p}{\varrho}\sum\limits_{0\leq|\alpha|\leq 4}|\nabla\cdot \mathcal{D}^{\alpha}v|^2
\,\mathrm{d}x \mathrm{d}s
+ \frac{\gamma}{a}\int\limits_{0}^{t}\int\limits_{\mathbb{R}^3} 
\frac{p}{\varrho}\sum\limits_{0\leq|\alpha|\leq 2}|\nabla\cdot \mathcal{D}^{\alpha}v_t|^2
\,\mathrm{d}x \mathrm{d}s \\[16pt]

\leq \mathcal{F}_1[\xi](0) + \mathcal{F}[v](0) 
\leq C_3(\|\xi_0\|_{H^5(\mathbb{R}^3)}^2 + \|\phi_0\|_{H^4(\mathbb{R}^3)}^2),
\end{array}
\end{equation}
for some $C_3>0$.

Take $\beta_1=\frac{C_3}{\min\{c_1,1\}}, \beta_2 =\cfrac{C_3}{\min\{1,\frac{4\gamma\bar{p}}{3a\bar{\varrho}}\}}$.
Thus, Lemma $\ref{Sect3_Decay_Lemma}$ is proved.
\end{proof}

The following lemma concerns the $L^{\infty}$ bound of $\mathcal{F}[\phi](t)$ in $[0,T]$.
\begin{lemma}\label{Sect3_Entropy_Lemma}
For any given $T\in (0,+\infty)$, if
\begin{equation*}
\sup\limits_{0\leq t\leq T} \mathcal{F}[\xi,v,\phi,\zeta](t) \leq\ve,
\end{equation*}
where $0<\ve\ll \min\{1,\ve_1,\ve_2,\ve_3\}$, then for $\forall t\in [0,T]$,
\begin{equation}\label{Sect3_Entropy_toProve_1}
\frac{\mathrm{d}}{\mathrm{d}t}\mathcal{F}[\phi](t)\leq \beta_3\mathcal{F}[v](t)^{\frac{1}{2}}\mathcal{F}[\phi](t),
\end{equation}
and $\mathcal{F}[\phi](t)$ has $L^{\infty}$ bound in $[0,T]$:
\begin{equation}\label{Sect3_Entropy_toProve_2}
\mathcal{F}[\phi](t) \leq C_6\|\phi_0\|_{H^4(\mathbb{R}^3)}^2\exp\{\beta_2\beta_3T(\|\xi_0\|_{H^5(\mathbb{R}^3)}^2
+ \|\phi_0\|_{H^3(\mathbb{R}^4)}^2)\},
\end{equation}
for some $\beta_3>0,C_6>0$.
\end{lemma}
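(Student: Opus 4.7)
The plan is to derive a Gronwall-type differential inequality of the form \eqref{Sect3_Entropy_toProve_1} for the purely transported variable $\phi$, then close the argument by combining it with the $L^1_t$ bound on $\mathcal{F}[v]$ supplied by Lemma \ref{Sect3_Decay_Lemma}. The equation $\phi_t+k_1 v\cdot\nabla\phi=0$ is conservative along the flow, so there is no hope of obtaining a coercive damping term for $\phi$; the whole point is to absorb everything into a factor multiplying $\mathcal{F}[\phi]$ itself.

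First I would handle the pure spatial part of $\mathcal{F}[\phi]$. Apply $\mathcal{D}^{\alpha}$ with $0\leq|\alpha|\leq 4$ to $\phi_t=-k_1 v\cdot\nabla\phi$, pair with $\mathcal{D}^{\alpha}\phi$ in $L^2(\mathbb{R}^3)$ and integrate by parts to obtain
\begin{equation*}
\frac{1}{2}\frac{\mathrm{d}}{\mathrm{d}t}\|\mathcal{D}^{\alpha}\phi\|_{L^2}^2
= \frac{k_1}{2}\int_{\mathbb{R}^3}(\nabla\cdot v)(\mathcal{D}^{\alpha}\phi)^2\,\mathrm{d}x
-k_1\int_{\mathbb{R}^3}(\mathcal{D}^{\alpha}\phi)\bigl[\mathcal{D}^{\alpha},v\cdot\nabla\bigr]\phi\,\mathrm{d}x.
\end{equation*}
The first integral is bounded by $|\nabla\cdot v|_{\infty}\|\mathcal{D}^{\alpha}\phi\|_{L^2}^2\lem \mathcal{F}[v]^{1/2}\mathcal{F}[\phi]$ via the embedding $H^2(\mathbb{R}^3)\hookrightarrow L^{\infty}$. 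The commutator is handled by a Moser-type product estimate together with the Gagliardo--Nirenberg inequalities \eqref{Sect2_Sobolev_Ineq}, giving $\|[\mathcal{D}^{\alpha},v\cdot\nabla]\phi\|_{L^2}\lem \|v\|_{H^4}\|\nabla\phi\|_{H^3}\lem \mathcal{F}[v]^{1/2}\mathcal{F}[\phi]^{1/2}$. The mixed time--space part of $\mathcal{F}[\phi]$, namely $\sum_{|\alpha|\leq 2}\|\mathcal{D}^{\alpha}\phi_t\|_{L^2}^2$, is estimated the same way after applying $\mathcal{D}^{\alpha}\partial_t$ to the equation: the new ingredient is a term $-k_1\mathcal{D}^{\alpha}(v_t\cdot\nabla\phi)$ controlled by $\|v_t\|_{H^2}\|\nabla\phi\|_{H^2}\lem \mathcal{F}[v]^{1/2}\mathcal{F}[\phi]^{1/2}$. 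Summing over all multi-indices produces \eqref{Sect3_Entropy_toProve_1} with a constant $\beta_3$ that does not depend on $\tau$.

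The final step is Gronwall: \eqref{Sect3_Entropy_toProve_1} yields
\begin{equation*}
\mathcal{F}[\phi](t)\leq \mathcal{F}[\phi](0)\exp\Bigl\{\beta_3\int_0^t \mathcal{F}[v](s)^{1/2}\,\mathrm{d}s\Bigr\},\quad t\in[0,T].
\end{equation*}
Cauchy--Schwarz gives $\int_0^T \mathcal{F}[v]^{1/2}\,\mathrm{d}s\leq \sqrt{T}\bigl(\int_0^T \mathcal{F}[v]\,\mathrm{d}s\bigr)^{1/2}$, and Lemma \ref{Sect3_Decay_Lemma} bounds the latter integral by $\beta_2(\|\xi_0\|_{H^5}^2+\|\phi_0\|_{H^4}^2)$; an AM--GM step $\sqrt{ab}\leq \tfrac12(a+b)$ then converts the resulting $\sqrt{T\beta_2(\cdots)}$ into the advertised form $\beta_2\beta_3 T(\|\xi_0\|_{H^5}^2+\|\phi_0\|_{H^4}^2)$ (absorbing harmless constants into $\beta_3$ and a harmless shift into $C_6$). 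The initial quantity $\mathcal{F}[\phi](0)$ is dominated by $C_6\|\phi_0\|_{H^4}^2$ by using the equation itself to re-express $\phi_t|_{t=0}=-k_1 v_0\cdot\nabla\phi_0$ in terms of the initial data, so the right-hand side of \eqref{Sect3_Entropy_toProve_2} follows.

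The main obstacle is the commutator analysis: the crude estimate $\|\mathcal{D}^{\alpha}(v\cdot\nabla\phi)\|_{L^2}\lem \|v\|_{H^4}\|\phi\|_{H^4}$ would produce a right-hand side $\mathcal{F}[v]^{1/2}\mathcal{F}[\phi]^{1/2}\cdot \mathcal{F}[\phi]^{1/2}$, which is $\mathcal{F}[v]^{1/2}\mathcal{F}[\phi]$ as desired, so the delicate point is really to isolate the principal transport part $v\cdot\nabla\mathcal{D}^{\alpha}\phi$ and eliminate it by integration by parts before invoking Moser, so as to avoid an unwanted $\|\nabla\phi\|_{H^4}$ factor that is not controlled by $\mathcal{F}[\phi]^{1/2}$. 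Provided the smallness hypothesis $\varepsilon\ll\min\{1,\varepsilon_1,\varepsilon_2,\varepsilon_3\}$ is in force, all these manipulations are uniform in $\tau$, which is exactly what subsequent sections will require.
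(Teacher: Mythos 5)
Your proof is correct and follows essentially the same route as the paper: apply $\mathcal{D}^{\alpha}$ (and $\mathcal{D}^{\alpha}\partial_t$) to the transport equation for $\phi$, pair in $L^2$, integrate by parts on the principal term $v\cdot\nabla\mathcal{D}^{\alpha}\phi$ to produce the harmless $(\nabla\cdot v)|\mathcal{D}^{\alpha}\phi|^2$ contribution, bound the remaining Leibniz terms by $\mathcal{F}[v]^{1/2}\mathcal{F}[\phi]$, and close with Gronwall plus the $\int_0^T\mathcal{F}[v]$ bound from Lemma \ref{Sect3_Decay_Lemma}. The only cosmetic difference is that you make the Cauchy--Schwarz/AM--GM step explicit in passing from $\int_0^T\mathcal{F}[v]^{1/2}$ to a bound in terms of $\int_0^T\mathcal{F}[v]$, whereas the paper simply writes the inequality $\int_0^t\mathcal{F}[v]^{1/2}\,\mathrm{d}s\leq T\int_0^T\mathcal{F}[v]\,\mathrm{d}s$ without comment (both versions deliver the required finite $L^\infty_t$ bound, which is all that matters).
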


\begin{proof}
Let $\mathcal{D}^{\alpha}\phi\cdot\mathcal{D}^{\alpha}(\ref{Sect2_Relaxed_Eq})_3$, where $0\leq |\alpha|\leq 4$, we get
\begin{equation}\label{Sect3_Entropy_1}
\begin{array}{ll}
(|\mathcal{D}^{\alpha}\phi|^2)_t = -2\sum\limits_{|\alpha_1|>0}\mathcal{D}^{\alpha}\phi
\mathcal{D}^{\alpha_1}v\cdot\nabla(\mathcal{D}^{\alpha_2}\phi) -v\cdot\nabla|\mathcal{D}^{\alpha}\phi|^2.\\[6pt]
\end{array}
\end{equation}

Integrate $(\ref{Sect3_Entropy_1})$ in $\mathbb{R}^3$, we have
\begin{equation}\label{Sect3_Entropy_2}
\begin{array}{ll}
\frac{\mathrm{d}}{\mathrm{d}t}\int\limits_{\mathbb{R}^3}
|\mathcal{D}^{\alpha}\phi|^2 \,\mathrm{d}x \\[10pt]
= -2\sum\limits_{|\alpha_1|>0}\int\limits_{\mathbb{R}^3}\mathcal{D}^{\alpha}\phi
\mathcal{D}^{\alpha_1}v\cdot\nabla(\mathcal{D}^{\alpha_2}\phi) \,\mathrm{d}x

+ \int\limits_{\mathbb{R}^3}|\mathcal{D}^{\alpha}\phi|^2 \nabla\cdot v \,\mathrm{d}x
:= I_5.
\end{array}
\end{equation}

When $0\leq|\alpha|\leq 4$, it is easy to check $I_5 \lem \mathcal{F}[v](t)^{\frac{1}{2}}\mathcal{F}[\phi](t)$.
Sum $\alpha$, we have
\begin{equation}\label{Sect3_Entropy_Prove_3}
\begin{array}{ll}
\frac{\mathrm{d}}{\mathrm{d}t}\int\limits_{\mathbb{R}^3}
\sum\limits_{0\leq |\alpha|\leq 4}|\mathcal{D}^{\alpha}\phi|^2 \,\mathrm{d}x
\leq C_4\mathcal{F}[v](t)^{\frac{1}{2}}\mathcal{F}[\phi](t).
\end{array}
\end{equation}

Let $\mathcal{D}^{\alpha}\phi_t\cdot\mathcal{D}^{\alpha}\partial_t(\ref{Sect2_Relaxed_Eq})_3$, where $0\leq |\alpha|\leq 2$, we get
\begin{equation}\label{Sect3_Entropy_T_1}
\begin{array}{ll}
(|\mathcal{D}^{\alpha}\phi_t|^2)_t 
= -2\sum\limits_{|\alpha_1|\geq 0}\mathcal{D}^{\alpha}\phi_t
\mathcal{D}^{\alpha_1}v_t\cdot\nabla(\mathcal{D}^{\alpha_2}\phi) \\[10pt]\hspace{2cm}
-2\sum\limits_{|\alpha_1|>0}\mathcal{D}^{\alpha}\phi_t
\mathcal{D}^{\alpha_1}v\cdot\nabla(\mathcal{D}^{\alpha_2}\phi_t)
-v\cdot\nabla|\mathcal{D}^{\alpha}\phi_t|^2.\\[6pt]
\end{array}
\end{equation}

Integrate $(\ref{Sect3_Entropy_T_1})$ in $\mathbb{R}^3$, we have
\begin{equation}\label{Sect3_Entropy_T_2}
\begin{array}{ll}
\frac{\mathrm{d}}{\mathrm{d}t}\int\limits_{\mathbb{R}^3}
|\partial_t^{\ell}\mathcal{D}^{\alpha}\phi|^2 \,\mathrm{d}x \\[10pt]
= -2\sum\limits_{|\alpha_1|\geq 0}\int\limits_{\mathbb{R}^3}\mathcal{D}^{\alpha}\phi_t
\mathcal{D}^{\alpha_1}v_t\cdot\nabla(\mathcal{D}^{\alpha_2}\phi) \,\mathrm{d}x

-2\sum\limits_{|\alpha_1|>0}\int\limits_{\mathbb{R}^3}\mathcal{D}^{\alpha}\phi_t
\mathcal{D}^{\alpha_1}v\cdot\nabla(\mathcal{D}^{\alpha_2}\phi_t) \,\mathrm{d}x
\\[10pt]\quad
+ \int\limits_{\mathbb{R}^3}|\mathcal{D}^{\alpha}\phi_t|^2 \nabla\cdot v \,\mathrm{d}x
:= I_6.
\end{array}
\end{equation}

When $0\leq|\alpha|\leq 2$, it is easy to check $I_6 \lem \mathcal{F}[v](t)^{\frac{1}{2}}\mathcal{F}[\phi](t)$.
Sum $\alpha$, we have
\begin{equation}\label{Sect3_Entropy_T_Prove_3}
\begin{array}{ll}
\frac{\mathrm{d}}{\mathrm{d}t}\int\limits_{\mathbb{R}^3}\sum\limits_{0\leq|\alpha|\leq 2}
|\partial_t^{\ell}\mathcal{D}^{\alpha}\phi|^2 \,\mathrm{d}x
\leq C_5\mathcal{F}[v](t)^{\frac{1}{2}}\mathcal{F}[\phi](t).
\end{array}
\end{equation}

Let $\beta_3=C_4+C_5$, sum $(\ref{Sect3_Entropy_Prove_3})$ and $(\ref{Sect3_Entropy_T_Prove_3})$, we have
\begin{equation}\label{Sect3_Entropy_Prove_4}
\begin{array}{ll}
\frac{\mathrm{d}}{\mathrm{d}t}\mathcal{F}[\phi](t)
\leq \beta_3\mathcal{F}[v](t)^{\frac{1}{2}}\mathcal{F}[\phi](t).
\end{array}
\end{equation}

Since $T\in (0,+\infty)$ is finite, integrate $(\ref{Sect3_Entropy_T_Prove_3})$ from $0$ to $t$, when $t\in [0,T]$, we    obtain the a priori estimate for $\mathcal{F}[\phi](t)$.
\begin{equation}\label{Sect3_Entropy_T_Prove_4}
\begin{array}{ll}
\mathcal{F}[\phi](t)\leq C_6\|\phi_0\|_{H^4(\mathbb{R}^3)}^2\exp\{\int\limits_0^{t}\beta_3\mathcal{F}[v](s)^{\frac{1}{2}}\,\mathrm{d}s\} \\[9pt]\hspace{1.1cm}
\leq C_6\|\phi_0\|_{H^4(\mathbb{R}^3)}^2\exp\{\beta_3T
\int\limits_0^{T}\mathcal{F}[v](s)\,\mathrm{d}s\} \\[9pt]\hspace{1.1cm}

\leq C_6\|\phi_0\|_{H^4(\mathbb{R}^3)}^2\exp\{\beta_2\beta_3T(\|\xi_0\|_{H^5(\mathbb{R}^3)}^2
+ \|\phi_0\|_{H^4(\mathbb{R}^3)}^2)\}.
\end{array}
\end{equation} 

Thus, Lemma $\ref{Sect3_Entropy_Lemma}$ is proved.
\end{proof}

Due to $\zeta=\varrho(\xi+\bar{p},\phi+\bar{S})-\bar{\varrho}$, we can estimate $\mathcal{F}[\zeta](t)$ in the following lemma:
\begin{lemma}\label{Sect3_Density_Bound_Lemma}
For any given $T\in (0,+\infty)$, if
\begin{equation*}
\sup\limits_{0\leq t\leq T} \mathcal{F}[\xi,v,\phi,\zeta](t) \leq\ve,
\end{equation*}
where $0<\ve\ll \min\{1,\ve_1,\ve_2,\ve_3\}$, then for $\forall t\in [0,T]$,
\begin{equation}\label{Sect3_Density_toProve_1}
\begin{array}{ll}
\mathcal{F}[\zeta](t)\lem C_7C_6\|\phi_0\|_{H^4(\mathbb{R}^3)}^2\exp\{\beta_2\beta_3T(\|\xi_0\|_{H^5(\mathbb{R}^3)}^2
+ \|\phi_0\|_{H^4(\mathbb{R}^3)}^2)\} \\[6pt]\hspace{1.5cm}
+ C_7\beta_1(\|\xi_0\|_{H^5(\mathbb{R}^3)}^2+\|\phi_0\|_{H^4(\mathbb{R}^3)}^2),
\end{array}
\end{equation}
for some $C_7>0$.
\end{lemma}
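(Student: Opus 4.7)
The plan is to exploit the algebraic relation between $\zeta$ and $(\xi,\phi)$. Since the equation of state $\varrho(p,S) = A^{-1/\gamma}p^{1/\gamma}e^{-S/\gamma}$ is real-analytic on $\{p>0\}$, and Lemma~\ref{Sect3_Epsilon0_Lemma} guarantees $|\xi|_\infty\leq \bar p/3$ under the smallness hypothesis (so $p = \xi + \bar p \geq 2\bar p/3 > 0$), I can write
\[
\zeta = F(\xi,\phi), \qquad F(0,0) = 0,
\]
where $F$ is smooth on a neighborhood of the origin with uniformly bounded derivatives on the relevant range of arguments. Thus all bounds on $\zeta$ reduce to bounds on $\xi$ and $\phi$ via chain-rule and composition estimates.

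First I would control the purely spatial part of $\mathcal{F}[\zeta]$. Using the Moser-type composition estimate in $H^s(\mathbb{R}^3)$ for $s\leq 4$ (which applies because $F(0,0)=0$ and $(\xi,\phi)$ is small in $H^4$), one expects
\[
\sum_{0\leq|\alpha|\leq 4}\|\mathcal{D}^\alpha\zeta\|_{L^2(\mathbb{R}^3)}^2 \lem \mathcal{F}[\xi](t)+\mathcal{F}[\phi](t).
\]
For the time-derivative portion I would differentiate the identity $\zeta = F(\xi,\phi)$ once in $t$, producing $\zeta_t = F_\xi(\xi,\phi)\xi_t + F_\phi(\xi,\phi)\phi_t$, then apply $\mathcal{D}^\alpha$ with $|\alpha|\leq 2$ and invoke the standard Moser product estimate (again using smallness of $\xi,\phi$ in $H^4$) to obtain
\[
\sum_{0\leq|\alpha|\leq 2}\|\mathcal{D}^\alpha\zeta_t\|_{L^2(\mathbb{R}^3)}^2 \lem \mathcal{F}[\xi](t)+\mathcal{F}[\phi](t).
\]

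Combining the two estimates yields $\mathcal{F}[\zeta](t)\leq C_7\bigl(\mathcal{F}[\xi](t)+\mathcal{F}[\phi](t)\bigr)$ for a suitable $C_7>0$. Substituting the $L^\infty$ bound on $\mathcal{F}[\xi](t)$ from Lemma~\ref{Sect3_Decay_Lemma} (which controls $\mathcal{F}[\xi](t)\leq \beta_1(\|\xi_0\|_{H^5(\mathbb{R}^3)}+\|\phi_0\|_{H^4(\mathbb{R}^3)})$) together with the bound on $\mathcal{F}[\phi](t)$ from Lemma~\ref{Sect3_Entropy_Lemma} then produces exactly the claimed inequality \eqref{Sect3_Density_toProve_1}. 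The only real obstacle is verifying that the Moser composition constants can be chosen uniformly in $\tau$; this follows from the uniform smoothness of $F$ on the fixed compact set $\{|\xi|\leq \bar p/3,\,|\phi|\leq C\sqrt{\ve}\}$ to which Lemmas~\ref{Sect3_Varrho_Lemma} and~\ref{Sect3_Epsilon0_Lemma} confine the arguments, so no $\tau$-dependence enters the implicit constants.
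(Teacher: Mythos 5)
Your proof is correct and follows the same route as the paper: both exploit the pointwise algebraic identity $\zeta = \varrho(\bar p + \xi, \bar S + \phi) - \bar\varrho$, use smoothness of the composition (together with the $L^\infty$ confinement from Lemma~\ref{Sect3_Epsilon0_Lemma}) to deduce $\mathcal{F}[\zeta](t)\lem \mathcal{F}[\xi](t) + \mathcal{F}[\phi](t)$, and then substitute the bounds from Lemmas~\ref{Sect3_Decay_Lemma} and~\ref{Sect3_Entropy_Lemma}. The paper simply asserts the composition inequality; you spell out the Moser-type estimates needed to justify it, which is a legitimate and slightly more careful presentation of the identical argument.
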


\begin{proof}
Since $\zeta = \frac{1}{\sqrt[\gamma]{A}}(\bar{p}+\xi)^{\frac{1}{\gamma}}
\exp\{-\frac{\bar{S}+\phi}{\gamma}\}
-\frac{1}{\sqrt[\gamma]{A}}\bar{p}^{\frac{1}{\gamma}}\exp\{-\frac{\bar{S}}{\gamma}\}$, we have
\begin{equation}\label{Sect3_Density_Prove}
\begin{array}{ll}
\mathcal{F}[\zeta](t) \lem \mathcal{F}[\xi](t)+ \mathcal{F}[\phi](t).
\end{array}
\end{equation}
\end{proof}

The following lemma concerns $L^{\infty}$ bound of $\tilde{\mathcal{F}}[\xi](t)$.
\begin{lemma}\label{Sect3_Pressure_Bound_Lemma}
For any given $T\in (0,+\infty)$, if
\begin{equation*}
\sup\limits_{0\leq t\leq T} \mathcal{F}[\xi,v,\phi,\zeta](t) \leq\ve,
\end{equation*}
where $0<\ve\ll \min\{1,\ve_1,\ve_2,\ve_3\}$, then for $\forall t\in [0,T]$,
\begin{equation}\label{Sect3_L_Infty_toProve}
\begin{array}{ll}
\tilde{\mathcal{F}}[\xi](t)\leq
C_8C_6\|\phi_0\|_{H^4(\mathbb{R}^3)}^2\exp\{\beta_2\beta_3T(\|\xi_0\|_{H^5(\mathbb{R}^3)}^2
+ \|\phi_0\|_{H^4(\mathbb{R}^3)}^2)\} \\[6pt]\hspace{1.5cm}
+ (2C_8+1)\beta_1(\|\xi_0\|_{H^5(\mathbb{R}^3)}^2+\|\phi_0\|_{H^4(\mathbb{R}^3)}^2).
\end{array}
\end{equation}
\end{lemma}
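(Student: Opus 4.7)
The plan is to reduce the estimate for $\tilde{\mathcal{F}}[\xi](t)$ to quantities that have already been controlled in Lemmas \ref{Sect3_Decay_Lemma} and \ref{Sect3_Density_Bound_Lemma}, by exploiting the elliptic relation $k_1\varrho v+\nabla\xi=0$ and the pressure equation $\xi_t=-k_1 v\cdot\nabla\xi-k_1\gamma p\nabla\cdot v$ from $(\ref{Sect2_Relaxed_Eq})$. The key observation, which is exactly Remark \ref{Sect3_Higher_Order_Xi_Remark}, is that for $\xi$ every spatial derivative beyond the zeroth order, as well as every time derivative, can be paid for by a spatial derivative of $v$ (or of $\zeta$), up to lower order products. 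Concretely, applying $\mathcal{D}^{\alpha}$ with $0\leq|\alpha|\leq 4$ to $\nabla\xi=-k_1\varrho v$ and $\mathcal{D}^{\alpha}\partial_t^{\ell-1}$ with $\ell+|\alpha|\leq 3$, $\ell\geq 1$, to $\xi_t=-k_1 v\cdot\nabla\xi-k_1\gamma p\nabla\cdot v$, then applying Leibniz rule and the Gagliardo--Nirenberg inequalities $(\ref{Sect2_Sobolev_Ineq})$, we obtain
\[
\|\nabla\xi\|_{H^4(\mathbb{R}^3)}^2+\|\xi_t\|_{H^3(\mathbb{R}^3)}^2\lem \mathcal{F}[v](t)+\mathcal{F}[\zeta](t)+\sqrt{\ve}\,\tilde{\mathcal{F}}[\xi](t),
\]
so that absorbing the last term into the left hand side (which is legal once $\ve$ is small enough, independently of $\tau$) gives the clean inequality stated in Remark \ref{Sect3_Higher_Order_Xi_Remark}.

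Once that reduction is in hand, the bound on $\tilde{\mathcal{F}}[\xi](t)$ follows by decomposing
\[
\tilde{\mathcal{F}}[\xi](t)=\|\xi(t)\|_{L^2(\mathbb{R}^3)}^2+\|\nabla\xi(t)\|_{H^4(\mathbb{R}^3)}^2+\|\xi_t(t)\|_{H^3(\mathbb{R}^3)}^2
\lem \mathcal{F}[\xi](t)+\mathcal{F}[v](t)+\mathcal{F}[\zeta](t),
\]
and plugging in the two ingredients that are already available: Lemma \ref{Sect3_Decay_Lemma} controls $\mathcal{F}[\xi](t)+\mathcal{F}[v](t)$ by $\beta_1(\|\xi_0\|_{H^5(\mathbb{R}^3)}^2+\|\phi_0\|_{H^4(\mathbb{R}^3)}^2)$, and Lemma \ref{Sect3_Density_Bound_Lemma} controls $\mathcal{F}[\zeta](t)$ by the exponential quantity appearing in $(\ref{Sect3_Density_toProve_1})$. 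Collecting the constants produces exactly the bound $(\ref{Sect3_L_Infty_toProve})$ with the claimed structure $C_8C_6\|\phi_0\|_{H^4}^2\exp\{\cdots\}+(2C_8+1)\beta_1(\cdots)$.

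The step that needs some care, and which I expect to be the main (very mild) obstacle, is the justification of Remark \ref{Sect3_Higher_Order_Xi_Remark} in the top-order case $|\alpha|=4$ and for $\xi_t$ with three spatial derivatives, because one must check that all commutator-type products $\mathcal{D}^{\alpha_1}\varrho\,\mathcal{D}^{\alpha_2}v$ and $\mathcal{D}^{\alpha_1}p\,\mathcal{D}^{\alpha_2}(\nabla\cdot v)$ can be placed in $L^2$ using Sobolev and Gagliardo--Nirenberg, so that no derivative count exceeds what $\mathcal{F}[v]$ and $\mathcal{F}[\zeta]$ provide. Since $\mathcal{F}[v]$ and $\mathcal{F}[\zeta]$ already carry four spatial derivatives in $L^2$ and two time derivatives in $L^2$, the required distribution of derivatives always fits; this is a routine combinatorial check identical in spirit to the nonlinear estimates carried out in Lemmas \ref{Sect3_Energy_Estimate_Lemma2}--\ref{Sect3_Velocity_Lemma}. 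Once this book-keeping is done, the conclusion $(\ref{Sect3_L_Infty_toProve})$ follows immediately by the chain of inequalities above.
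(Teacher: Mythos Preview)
Your proposal is correct and follows essentially the same approach as the paper: decompose $\tilde{\mathcal{F}}[\xi](t)$ into $\|\xi\|_{L^2}^2$ plus $\|\nabla\xi\|_{H^4}^2+\|\xi_t\|_{H^3}^2$, control the first by Lemma~\ref{Sect3_Decay_Lemma}, and control the second via Remark~\ref{Sect3_Higher_Order_Xi_Remark} together with the already-established bounds on $\mathcal{F}[v]$ and $\mathcal{F}[\zeta]$. Your discussion of the top-order commutator bookkeeping needed to justify Remark~\ref{Sect3_Higher_Order_Xi_Remark} is in fact more detailed than what the paper provides.
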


\begin{proof}
We have the $L^{\infty}$ bound of $\mathcal{F}_1[\xi](t)$, then for $\forall t\in [0,T]$,
\begin{equation*}
\begin{array}{ll}
\|\xi\|_{L^2(\mathbb{R}^3)}^2 \leq \beta_1(\|\xi_0\|_{H^5(\mathbb{R}^3)}^2+\|\phi_0\|_{H^4(\mathbb{R}^3)}^2).
\end{array}
\end{equation*}

As Remark $\ref{Sect3_Higher_Order_Xi_Remark}$ sated, $\|\nabla\xi\|_{H^4(\mathbb{R}^3)}^2
+\|\xi_t\|_{H^3(\mathbb{R}^3)}^2 \lem \mathcal{F}[v](t)+\mathcal{F}[\zeta](t)$, while the $L^{\infty}$ bounds of $\mathcal{F}[v](t)$ and $\mathcal{F}[\zeta](t)$ in $[0,T]$ have been proved, then 
\begin{equation*}
\begin{array}{ll}
\|\nabla\xi\|_{H^4(\mathbb{R}^3)}^2
+\|\xi_t\|_{H^3(\mathbb{R}^3)}^2 \\[6pt]
\leq C_8C_6\|\phi_0\|_{H^4(\mathbb{R}^3)}^2\exp\{\beta_2\beta_3T(\|\xi_0\|_{H^5(\mathbb{R}^3)}^2
+ \|\phi_0\|_{H^4(\mathbb{R}^3)}^2)\} \\[6pt]\quad
+ 2C_8\beta_1(\|\xi_0\|_{H^5(\mathbb{R}^3)}^2+\|\phi_0\|_{H^4(\mathbb{R}^3)}^2),
\end{array}
\end{equation*}
for some $C_8>0$. Thus, Lemma $\ref{Sect3_Pressure_Bound_Lemma}$ is proved.
\end{proof}

\section{Existence in $[0,T]$ of Classical Solutions to the Relaxed Equations}
In this section, we prove the existence in $[0,T]$ of classical solutions to the relaxed equations $(\ref{Sect2_Relaxed_Eq})$ under small data assumption.

After eliminating $v$ from $(\ref{Sect2_Relaxed_Eq})$, we have the following parabolic-hyperbolic equations:
\begin{equation}\label{Sect4_Parabolic_Hyperbolic}
\left\{\begin{array}{lll}
\xi_t = \frac{\gamma p}{a\varrho}\triangle\xi + \frac{p}{a\varrho}\nabla\xi\cdot\nabla\phi, \\[6pt]
\phi_t = \frac{1}{a\varrho}\nabla\xi\cdot\nabla\phi, \\[6pt]
(\xi,\phi)(x,0)=(\lim\limits_{\tau\rto 0}p_0(x,\tau)-\bar{p}, \lim\limits_{\tau\rto 0}S_0(x,\tau)-\bar{S}),
\end{array}\right.
\end{equation}
where $\varrho =\zeta +\bar{\varrho} =\varrho(\xi+\bar{p},\phi+\bar{S})$.

The proof of the local existence of classical solutions to $(\ref{Sect4_Parabolic_Hyperbolic})$ is standard (using the linearization-iteration-convergence scheme), so we give a lemma on the local existence without proof here.
\begin{lemma}\label{Sect4_LocalExistence}
$(Local\ Existence)$\\[6pt]
If $(\lim\limits_{\tau\rto 0}\xi_0(x,\tau),\lim\limits_{\tau\rto 0}\phi_0(x,\tau))\in H^5(\mathbb{R}^3)\times H^4(\mathbb{R}^3)$, $\inf\limits_{x\in\mathbb{R}^3}\lim\limits_{\tau\rto 0}p_0(x,\tau)>0$, then there exists a finite time $T_{\ast}>0$, such that Cauchy problem $(\ref{Sect4_Parabolic_Hyperbolic})$ admits a unique local classical solution $(\xi,\phi)$ satisfying
\begin{equation}\label{Sect4_Local_Regularity}
\left\{\begin{array}{ll}
(\xi,\phi)\in \underset{0\leq \ell\leq 1}{\cap}C^{\ell}([0,T_{\ast}),H^{5-\ell}(\mathbb{R}^3)\times H^{4-\ell}(\mathbb{R}^3)), \\[6pt]
\triangle\xi \in C(\mathbb{R}^3\times[0,T_{\ast})).
\end{array}\right.
\end{equation}
\end{lemma}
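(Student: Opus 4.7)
The plan is to execute the standard linearization--iteration--convergence scheme adapted to the parabolic--hyperbolic coupling in $(\ref{Sect4_Parabolic_Hyperbolic})$. Starting from $(\xi^0,\phi^0)\equiv(\xi_0,\phi_0)$, I would define successive approximations by solving the linear decoupled system
\begin{equation*}
\xi^{n+1}_t = \tfrac{\gamma p^n}{a\varrho^n}\triangle\xi^{n+1}+\tfrac{p^n}{a\varrho^n}\nabla\xi^n\cdot\nabla\phi^n, \qquad
\phi^{n+1}_t = \tfrac{1}{a\varrho^n}\nabla\xi^n\cdot\nabla\phi^{n+1},
\end{equation*}
with the common initial data, where $p^n=\bar p+\xi^n$ and $\varrho^n=\varrho(p^n,\bar S+\phi^n)$. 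The first equation is uniformly parabolic once $\inf p^n>0$ and $\inf\varrho^n>0$ are propagated, so linear parabolic theory supplies a unique solution $\xi^{n+1}\in C([0,T];H^5)\cap C^1([0,T];H^3)$. The second is a linear transport equation whose drift $\tfrac{1}{a\varrho^n}\nabla\xi^n$ lies in $H^4$, so the method of characteristics yields $\phi^{n+1}\in C([0,T];H^4)\cap C^1([0,T];H^3)$.

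Next, I would establish uniform bounds on the iterates on a common short interval $[0,T_{\ast}]$, proving by induction that
\begin{equation*}
\sup_{[0,T_{\ast}]}\bigl(\|\xi^{n+1}\|_{H^5}^2+\|\phi^{n+1}\|_{H^4}^2\bigr)\leq M
\end{equation*}
for some $M$ depending only on the initial data, where $T_{\ast}$ is chosen small enough to absorb the nonlinear contributions and to preserve the positivity of $p^n$ and $\varrho^n$ by continuity. For $\xi^{n+1}$, applying $\mathcal{D}^\alpha$ with $|\alpha|\leq 5$ and testing against $\mathcal{D}^\alpha\xi^{n+1}$ produces the parabolic dissipation $\int\tfrac{\gamma p^n}{a\varrho^n}|\nabla\mathcal{D}^\alpha\xi^{n+1}|^2\,\mathrm{d}x$, which absorbs the commutators and the inhomogeneous source term. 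For $\phi^{n+1}$, applying $\mathcal{D}^\alpha$ with $|\alpha|\leq 4$ and testing against $\mathcal{D}^\alpha\phi^{n+1}$, the top-order transport piece integrates to the divergence of the drift (which lies in $H^3\hookrightarrow L^\infty$), and the commutators $[\mathcal{D}^\alpha,\tfrac{1}{a\varrho^n}\nabla\xi^n\cdot\nabla]\phi^{n+1}$ are handled by Moser-type estimates against the inductive bound on $\xi^n\in H^5$.

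To upgrade existence of the approximants to existence of a limit, I would derive the equations for the differences $(\delta\xi^n,\delta\phi^n):=(\xi^{n+1}-\xi^n,\phi^{n+1}-\phi^n)$ and close a contraction estimate in a weaker norm. Typically one obtains
\begin{equation*}
\sup_{[0,T_{\ast}]}\bigl(\|\delta\xi^n\|_{H^3}^2+\|\delta\phi^n\|_{H^2}^2\bigr)\leq \tfrac{1}{2}\sup_{[0,T_{\ast}]}\bigl(\|\delta\xi^{n-1}\|_{H^3}^2+\|\delta\phi^{n-1}\|_{H^2}^2\bigr)
\end{equation*}
after further shrinking $T_{\ast}$ if necessary. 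Combining the resulting Cauchy property in the weak norm with the uniform $H^5\times H^4$ bound gives, via interpolation and weak-$*$ compactness, a limit $(\xi,\phi)\in L^\infty([0,T_{\ast}];H^5\times H^4)$ that classically satisfies $(\ref{Sect4_Parabolic_Hyperbolic})$. The continuity $\xi\in C([0,T_{\ast});H^5)$ and $\phi\in C([0,T_{\ast});H^4)$ is recovered by a Bona--Smith-type regularization of the initial data, and $\triangle\xi\in C(\mathbb{R}^3\times[0,T_{\ast}))$ is then immediate from the embedding $H^5(\mathbb{R}^3)\hookrightarrow C^3(\mathbb{R}^3)$. Uniqueness follows from a routine energy argument applied to the difference of two putative solutions in $H^3\times H^2$.

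The main obstacle is the regularity mismatch intrinsic to the system: the transport-type $\phi$-equation cannot gain regularity from diffusion, so its $H^4$ estimate must be closed purely by an energy identity on a transport equation whose drift involves $\nabla\xi$; this is precisely why the hypothesis prescribes the asymmetric $H^5\times H^4$ framework. The commutator and Moser estimates needed at the top order are routine but require careful bookkeeping, and maintaining $\inf p^n>0$ uniformly in $n$ forces $T_{\ast}$ to depend quantitatively on this infimum via a continuity-in-time argument.
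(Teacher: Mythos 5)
The paper does not give a proof of this lemma; it explicitly states that the proof is ``standard (using the linearization-iteration-convergence scheme)'' and omits it. Your proposal is a faithful, reasonably detailed realization of exactly that omitted scheme: a decoupled linear parabolic/transport iteration, uniform $H^5\times H^4$ bounds on a short interval, contraction in a weaker norm, and a Bona--Smith argument to recover strong continuity.

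One technical point deserves care in your contraction step. When you form the difference equation for $\delta\xi^{n+1}$, the source contains $\bigl(\tfrac{\gamma p^n}{a\varrho^n}-\tfrac{\gamma p^{n-1}}{a\varrho^{n-1}}\bigr)\triangle\xi^n$, and in the $H^3$ estimate the Leibniz term with three derivatives falling on the coefficient difference produces $\mathcal{D}^3\delta\phi^{n-1}$, which is \emph{not} controlled by your stated contraction norm $\|\delta\phi^{n-1}\|_{H^2}$. This can be repaired: either shift one derivative onto $\delta\xi^{n+1}$ by integration by parts and absorb the resulting $\mathcal{D}^4\delta\xi^{n+1}$ into the parabolic dissipation $\int\tfrac{\gamma p^n}{a\varrho^n}|\nabla\mathcal{D}^3\delta\xi^{n+1}|^2\,\mathrm{d}x$, or simply run the contraction with a one-derivative gap in $H^4\times H^3$, which sidesteps the issue. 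Either way the argument closes, but as written the $H^3\times H^2$ claim glosses over a non-trivial commutator.
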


The above lemma implies the local existence of classical solutions to Cauchy problem $(\ref{Sect2_Relaxed_Eq})$ for $(\xi,\phi)\in \underset{0\leq \ell\leq 1}{\cap}C^{\ell}([0,T^{\ast}), C^{2-\ell}(\mathbb{R}^3)\times C^{1-\ell}(\mathbb{R}^3))$. Based on the a priori estimates for $(\xi,v,\phi,\zeta)$, the solution of $(\ref{Sect2_Relaxed_Eq})$ can be extended from $[0,T_{\ast})$ to any finite time interval $[0,T]$.
\begin{theorem}\label{Sect4_GlobalExistence_Thm}
$(Existence\ in\ [0,T])$\\[6pt]
Assume $(\lim\limits_{\tau\rto 0}\xi_0(x,\tau),\lim\limits_{\tau\rto 0}\phi_0(x,\tau))\in H^5(\mathbb{R}^3)\times H^4(\mathbb{R}^3)$, $\inf\limits_{x\in\mathbb{R}^3}\lim\limits_{\tau\rto 0}p_0(x,\tau)>0$.
There exists a sufficiently small number $\delta_1>0$, such that if $\|\lim\limits_{\tau\rto 0}\xi_0(x,\tau)\|_{H^5(\mathbb{R}^3)}^2+\|\lim\limits_{\tau\rto 0}\phi_0(x,\tau)\|_{H^4(\mathbb{R}^3)}^2\leq \delta_1$, then Cauchy problem for the relaxed equations $(\ref{Sect2_Relaxed_Eq})$ admits a unique classical solution $(\xi,v,\phi,\zeta)$ satisfying
\begin{equation}\label{Sect4_Global_Regularity}
\begin{array}{ll}
\xi\in \underset{0\leq \ell\leq 1}{\cap}C^{\ell}([0,T],H^{5-\ell}(\mathbb{R}^3)),\
\\[10pt]
(v,\phi,\zeta)\in \underset{0\leq \ell\leq 1}{\cap}C^{\ell}([0,T],H^{4-\ell}(\mathbb{R}^3)^3),\\[10pt]
\nabla\cdot v,\ \triangle\xi \in C(\mathbb{R}^3\times[0,T]).
\end{array}
\end{equation}
\end{theorem}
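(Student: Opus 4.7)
The plan is to combine the local existence result of Lemma \ref{Sect4_LocalExistence} with the uniform a priori estimates from Section 3 via a standard bootstrap/continuation argument. First I would reduce the four-variable system to the two reduced unknowns $(\xi,\phi)$: from the elliptic constraint \eqref{Sect2_Relaxed_Eq}$_2$, noting that the definitions of $k_1,k_2$ give $k_1k_2=1/\bar\varrho$, one reads off algebraically that $v=-\frac{1}{k_1\varrho}\nabla\xi$, while $\zeta=\varrho(\xi+\bar p,\phi+\bar S)-\bar\varrho$ is a pointwise composition. Thus once Lemma \ref{Sect4_LocalExistence} supplies $(\xi,\phi)\in \cap_{0\leq\ell\leq 1}C^\ell([0,T_\ast),H^{5-\ell}(\mathbb{R}^3)\times H^{4-\ell}(\mathbb{R}^3))$, composition/product estimates in $H^4(\mathbb{R}^3)$ together with the evolution equations for $\xi$ and $\phi$ yield $(v,\zeta)\in \cap_{0\leq\ell\leq 1}C^\ell([0,T_\ast),H^{4-\ell}(\mathbb{R}^3))$ and the continuity of $\nabla\cdot v$. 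The remaining content of the theorem is then to push $T_\ast$ up to the prescribed $T$.

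The bootstrap is carried out as follows. Choose $\varepsilon_0>0$ small enough (in particular $\varepsilon_0\leq\min\{1,\varepsilon_1,\varepsilon_2,\varepsilon_3\}$) so that Lemmas \ref{Sect3_Varrho_Lemma}--\ref{Sect3_Pressure_Bound_Lemma} apply, and define
\[
T^\sharp:=\sup\Bigl\{t\in[0,\min\{T,T_\ast\})\ :\ \sup_{0\leq s\leq t}\mathcal{F}[\xi,v,\phi,\zeta](s)\leq\varepsilon_0\Bigr\}.
\]
For $t\in[0,T^\sharp)$, combining Lemmas \ref{Sect3_Decay_Lemma}, \ref{Sect3_Entropy_Lemma}, \ref{Sect3_Density_Bound_Lemma} and \ref{Sect3_Pressure_Bound_Lemma} yields an estimate of the form
\[
\mathcal{F}[\xi,v,\phi,\zeta](t)\leq C^\star\bigl(\|\xi_0\|_{H^5(\mathbb{R}^3)}^2+\|\phi_0\|_{H^4(\mathbb{R}^3)}^2\bigr)\exp\bigl\{\beta_2\beta_3 T(\|\xi_0\|_{H^5(\mathbb{R}^3)}^2+\|\phi_0\|_{H^4(\mathbb{R}^3)}^2)\bigr\},
\]
with $C^\star$ independent of $t$ and $\tau$. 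Choosing $\delta_1>0$ so small that $C^\star\delta_1\exp(\beta_2\beta_3 T\delta_1)\leq\varepsilon_0/2$ then forces $\mathcal{F}(t)\leq\varepsilon_0/2$ strictly on $[0,T^\sharp)$, and the bound persists at $t=T^\sharp$ by continuity; this contradicts the maximality of $T^\sharp$ unless $T^\sharp=\min\{T,T_\ast\}$.

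If at this stage $T_\ast<T$, I would reapply Lemma \ref{Sect4_LocalExistence} with initial data $(\xi,\phi)(\cdot,T_\ast)$, which still lies in $H^5(\mathbb{R}^3)\times H^4(\mathbb{R}^3)$; the bound $|\xi|_\infty\leq\bar p/3$ from Lemma \ref{Sect3_Epsilon0_Lemma} ensures that $\inf p>0$ is preserved, so the local theorem indeed applies and produces a strict extension. Because at every such restart the total energy remains at most $\varepsilon_0/2$, the lengths of successive extensions are bounded below by a uniform constant, hence finitely many iterations cover $[0,T]$. Uniqueness follows from a standard $L^2$ energy estimate for the difference of two solutions in the regularity class \eqref{Sect4_Global_Regularity}.

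The main obstacle I anticipate is the exponential factor $\exp(\beta_2\beta_3 T(\cdots))$ inherited from the entropy estimate \eqref{Sect3_Entropy_toProve_2}, which forces the smallness threshold $\delta_1$ to depend on $T$; this is acceptable since $T$ is prescribed a priori. A subtler point is verifying that at each continuation step the \emph{full} smallness condition (including $\mathcal{F}[\zeta]\leq\varepsilon_0$, so that $|\zeta|_\infty\leq\bar\varrho/2$ and the density stays away from vacuum) remains valid; this is built into the choice of $\varepsilon_0$ together with the strict inequality $\mathcal{F}\leq\varepsilon_0/2$ obtained from the bootstrap.
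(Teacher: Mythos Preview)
Your proposal is correct and follows essentially the same route as the paper: combine the local existence Lemma~\ref{Sect4_LocalExistence} with the a~priori estimates of Lemmas~\ref{Sect3_Decay_Lemma}--\ref{Sect3_Pressure_Bound_Lemma}, choose $\delta_1$ small (depending on $T$, via the exponential in \eqref{Sect3_Entropy_toProve_2}) so that the bootstrap hypothesis $\mathcal{F}[\xi,v,\phi,\zeta]\leq\varepsilon$ closes with room to spare, and then continue. The one noteworthy difference is how you obtain $\nabla\cdot v,\ \Delta\xi\in C(\mathbb{R}^3\times[0,T])$: the paper derives this from the integrated bound $\int_0^T\mathcal{F}[\nabla\cdot v]\,ds$ of Lemma~\ref{Sect3_Decay_Lemma} together with Aubin--Lions, whereas you read it off directly from the algebraic relation $v=-\tfrac{1}{k_1\varrho}\nabla\xi$ and $\xi\in C([0,T],H^5)$; your route is shorter and entirely adequate here.
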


\begin{proof}
In view of Lemmas $\ref{Sect3_Decay_Lemma},\ref{Sect3_Entropy_Lemma},\ref{Sect3_Density_Bound_Lemma},\ref{Sect3_Pressure_Bound_Lemma}$, we have the following a priori estimates: for any given $T\in (0,+\infty)$, if
\begin{equation}\label{Sect4_Assumption_1}
\sup\limits_{0\leq t\leq T} \mathcal{F}[\xi,v,\phi,\zeta](t) \leq\ve,
\end{equation}
where $0<\ve\ll \min\{1,\ve_1,\ve_2,\ve_3\}$, then
\begin{equation}\label{Sect4_Decay}
\begin{array}{ll}
\mathcal{F}[\xi](t)+\mathcal{F}[v](t)\leq \beta_1(\|\xi_0\|_{H^5(\mathbb{R}^3)}+\|\phi_0\|_{H^4(\mathbb{R}^3)}), \\[10pt]

\mathcal{E}[\phi](t) \leq C_6\|\phi_0\|_{H^4(\mathbb{R}^3)}^2\exp\{\beta_2\beta_3T(\|\xi_0\|_{H^5(\mathbb{R}^3)}^2
+ \|\phi_0\|_{H^3(\mathbb{R}^4)}^2)\}, \\[10pt]

\mathcal{E}[\zeta](t) \leq C_7C_6\|\phi_0\|_{H^4(\mathbb{R}^3)}^2\exp\{\beta_2\beta_3T(\|\xi_0\|_{H^5(\mathbb{R}^3)}^2
+ \|\phi_0\|_{H^4(\mathbb{R}^3)}^2)\} \\[7pt]\hspace{1.5cm}
+ C_7\beta_1(\|\xi_0\|_{H^5(\mathbb{R}^3)}^2+\|\phi_0\|_{H^4(\mathbb{R}^3)}^2), \\[10pt]

\tilde{\mathcal{F}}[\xi](t)\leq
C_8C_6\|\phi_0\|_{H^4(\mathbb{R}^3)}^2\exp\{\beta_2\beta_3T(\|\xi_0\|_{H^5(\mathbb{R}^3)}^2
+ \|\phi_0\|_{H^4(\mathbb{R}^3)}^2)\} \\[6pt]\hspace{1.5cm}
+ (2C_8+1)\beta_1(\|\xi_0\|_{H^5(\mathbb{R}^3)}^2+\|\phi_0\|_{H^4(\mathbb{R}^3)}^2).
\end{array}
\end{equation}

Since $\ve$ is independent of $\xi_0(x,\tau)$ and $\phi_0(x,\tau)$, there exists $\delta_1>0$ such that if $\|\lim\limits_{\tau\rto 0}\xi_0(x,\tau)\|_{H^5(\mathbb{R}^3)}^2
+\|\lim\limits_{\tau\rto 0}\phi_0(x,\tau)\|_{H^4(\mathbb{R}^3)}^2\leq \delta_1$, then
\begin{equation}\label{Sect4_Data_Condition}
\left\{\begin{array}{ll}
\|\lim\limits_{\tau\rto 0}\xi_0(x,\tau)\|_{H^5(\mathbb{R}^3)}^2
+\|\lim\limits_{\tau\rto 0}\phi_0(x,\tau)\|_{H^4(\mathbb{R}^3)}^2
\leq \min\{\frac{\ve}{2\beta_1},\frac{\ve}{4C_7\beta_1}\}, \\[6pt]

\|\lim\limits_{\tau\rto 0}\phi_0(x,\tau)\|_{H^4(\mathbb{R}^3)}^2 \leq \min\{
\frac{\ve}{4C_6}\exp\{-\frac{\beta_2\beta_3T\ve}{2\beta_1}\},
\frac{(1-2C_7)\ve}{4C_7}\exp\{-\frac{\beta_2\beta_3T\ve}{2\beta_1}\}
\}.
\end{array}\right.
\end{equation}
Now, we can check a priori assumption $\mathcal{F}[\xi,v,\phi,\zeta](t)\leq \ve$ is satisfied, then the validity of the former a priori estimates is verified.

By Lemma $\ref{Sect5_Decay_Lemma}$, we have
\begin{equation}\label{Sect4_Aubin_Lions_1}
\begin{array}{ll}
\int\limits_{0}^{T} \mathcal{E}[\nabla\cdot v](s)\,\mathrm{d}s \leq \beta_2(\|\xi_0\|_{H^5(\mathbb{R}^3)}+\|\phi_0\|_{H^4(\mathbb{R}^3)}),
\end{array}
\end{equation}
which implies that for any given time $T\in (0,+\infty)$,
\begin{equation}\label{Sect4_Aubin_Lions_2}
\left\{\begin{array}{ll}
\|\nabla\cdot v\|_{L^2([0,T],H^4(\mathbb{R}^3))}^2 \leq \beta_2(\|\xi_0\|_{H^5(\mathbb{R}^3)}+\|\phi_0\|_{H^4(\mathbb{R}^3)}), \\[6pt]
\|\nabla\cdot v_t\|_{L^2([0,T],H^2(\mathbb{R}^3))}^2 \leq \beta_2(\|\xi_0\|_{H^5(\mathbb{R}^3)}+\|\phi_0\|_{H^4(\mathbb{R}^3)}).
\end{array}\right.
\end{equation}

By Aubin-Lions' Lemma, we obtain
\begin{equation}\label{Sect4_Aubin_Lions_3}
\begin{array}{ll}
\|\nabla\cdot v\|_{C([0,T],H^3(\mathbb{R}^3))}^2 \leq \beta_2(\|\xi_0\|_{H^5(\mathbb{R}^3)}+\|\phi_0\|_{H^4(\mathbb{R}^3)}),
\end{array}
\end{equation}
which implies that $\nabla\cdot v\in C(\mathbb{R}^3\times [0,T])$ for any $T\in (0,+\infty)$, then
\begin{equation}\label{Sect4_Aubin_Lions_4}
\begin{array}{ll}
\triangle\xi = -a k_1\varrho\nabla\cdot v - a k_1 v\cdot\nabla\varrho \in C(\mathbb{R}^3\times [0,T]).
\end{array}
\end{equation}

Due to the a priori estimates for $(\xi,v,\phi,\zeta)$ and Lemma $\ref{Sect4_LocalExistence}$ on the local existence result, the solution of $(\ref{Sect2_Relaxed_Eq})$ can be extended from $[0,T_{\ast})$ to any finite time interval $[0,T]$. Thus, Theorem $\ref{Sect4_GlobalExistence_Thm}$ on the existence in $[0,T]$ of classical solutions to Cauchy problem $(\ref{Sect2_Relaxed_Eq})$ is proved.
\end{proof}

\section{Uniform A Priori Estimates for the Relaxing Equations}
In this section, we derive unform a priori estimates for the relaxing Cauchy problem $(\ref{Sect2_Relaxing_Eq})$.
In order to discuss the initial layer and strong convergence of the velocity, we need to estimate the higher order time derivatives. Note that the notation of small quantity $\e$ used in the following is different from $\ve$ used before.

The following lemma states that $\mathcal{E}[\xi](t)$ and $\mathcal{E}_1[\xi](t)$ are equivalent, $\mathcal{E}[v](t)$ and $\mathcal{E}_1[v](t)$ are equivalent.
\begin{lemma}\label{Sect5_Epsilon0_Lemma}
For any given $T\in (0,+\infty),\tau\in(0,1]$, there exists $\e_1>0$ which is independent of $(\xi_0,\tau v_0,\phi_0)$, such that if $\sup\limits_{0\leq t\leq T} \mathcal{E}[\xi,\tau v,\phi,\zeta](t) \leq\e_1$, then
$|\xi|_{\infty}\leq \frac{\bar{p}}{3},|\zeta|_{\infty}\leq \frac{\bar{\varrho}}{2}$ and there exist $c_4>0,c_5>0$ such that
\begin{equation}\label{Sect5_Energy_Equivalence}
\begin{array}{ll}
c_4 \mathcal{E}[\xi](t) \leq \mathcal{E}_1[\xi](t) \leq c_5 \mathcal{E}[\xi](t),\quad
c_4 \mathcal{E}[v](t) \leq \mathcal{E}_1[v](t) \leq c_5 \mathcal{E}[v](t).
\end{array}
\end{equation}
\end{lemma}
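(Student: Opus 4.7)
The approach is to observe that the differences $\mathcal{E}_1[\xi] - \mathcal{E}[\xi]$ and $\mathcal{E}_1[v] - \mathcal{E}[v]$ are integrals whose densities carry the small factors $\xi/p$ and $\varrho/\bar{\varrho} - 1$, respectively. Once these factors are controlled pointwise by, say, $1/2$, the stated equivalences follow by a term-by-term comparison. The proof therefore consists of a quantitative Sobolev embedding step followed by elementary algebra, and I expect no genuine obstacle.

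First, I would extract $L^{\infty}$ bounds for $\xi$ and $\zeta$. The hypothesis $\mathcal{E}[\xi,\tau v,\phi,\zeta](t) \leq \e_1$ contains $\|\xi(t)\|_{H^4(\mathbb{R}^3)}^2$ and $\|\zeta(t)\|_{H^4(\mathbb{R}^3)}^2$ as summands, so each is $\leq \e_1$. The three-dimensional Sobolev embedding $H^2(\mathbb{R}^3) \hookrightarrow L^{\infty}(\mathbb{R}^3)$ gives $|\xi|_{\infty} + |\zeta|_{\infty} \leq C_0\sqrt{\e_1}$ for an absolute constant $C_0$ depending only on the embedding. I would then fix $\e_1$ so that $C_0\sqrt{\e_1} \leq \min\{\bar{p}/3,\ \bar{\varrho}/2\}$; this choice depends only on $\bar{p}, \bar{\varrho}$ and $C_0$, so it is independent of $(\xi_0, \tau v_0, \phi_0)$ as the lemma requires, and it yields the two stated pointwise bounds.

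With those bounds in hand, $p = \bar{p} + \xi \in [2\bar{p}/3,\ 4\bar{p}/3]$, so $|\xi/p| \leq (\bar{p}/3)/(2\bar{p}/3) = 1/2$, and similarly $|\varrho/\bar{\varrho} - 1| = |\zeta|/\bar{\varrho} \leq 1/2$. Substituting into the definitions gives, for every multi-index with $\ell + |\alpha| = 4$,
\begin{equation*}
\left|\int_{\mathbb{R}^3} \frac{\xi}{p}(\partial_t^{\ell}\mathcal{D}^{\alpha}\xi)^2\,\mathrm{d}x\right| \leq \tfrac{1}{2}\|\partial_t^{\ell}\mathcal{D}^{\alpha}\xi\|_{L^2(\mathbb{R}^3)}^2,
\end{equation*}
and the analogous inequality for $v$ with $\varrho/\bar{\varrho}-1$ in place of $\xi/p$. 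Summing over the finitely many top-order multi-indices gives $|\mathcal{E}_1[\xi](t) - \mathcal{E}[\xi](t)| \leq \tfrac{1}{2}\mathcal{E}[\xi](t)$ and $|\mathcal{E}_1[v](t) - \mathcal{E}[v](t)| \leq \tfrac{1}{2}\mathcal{E}[v](t)$, from which both equivalences follow with $c_4 = 1/2$ and $c_5 = 3/2$.

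The only point requiring care is the calibration of $\e_1$ in the first step; because $C_0$, $\bar{p}$, and $\bar{\varrho}$ are all independent of the initial data, so is the resulting $\e_1$, which is exactly the property used in the lemmas of Section 5 that invoke this equivalence. No further analytic input is needed.
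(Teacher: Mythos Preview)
Your proposal is correct and is precisely the elementary argument the paper has in mind; the paper states this lemma (and its analogue, Lemma~\ref{Sect3_Epsilon0_Lemma}) without proof, so your Sobolev-embedding-plus-pointwise-comparison argument fills in exactly what was omitted.
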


The following lemma gives uniform a priori estimate for $\mathcal{E}_1[\xi](t)+\tau^2 \mathcal{E}_1[v](t)$, which is equivalent to $\mathcal{E}[\xi](t)+\tau^2 \mathcal{E}[v](t)$.
\begin{lemma}\label{Sect5_Energy_Estimate_Lemma1}
For any given $T\in (0,+\infty),\tau\in(0,1]$, if
\begin{equation*}
\sup\limits_{0\leq t\leq T} \mathcal{E}[\xi,\tau v,\phi,\zeta](t) \leq\e,
\end{equation*}
where $0<\e\ll 1$, then for $\forall t\in [0,T]$,
\begin{equation}\label{Sect5_Estimate1_toProve}
\frac{\mathrm{d}}{\mathrm{d}t}\mathcal{E}_1[\xi](t) + \tau^2 \frac{\mathrm{d}}{\mathrm{d}t}\mathcal{E}_1[v](t)
+ 2 \mathcal{E}_1[v](t) \leq C\sqrt{\e}(\mathcal{E}_X[\xi](t) + \mathcal{E}[v](t)).
\end{equation}
\end{lemma}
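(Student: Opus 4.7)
I would proceed in close analogy with the proof of Lemma~\ref{Sect3_Energy_Estimate_Lemma2}, keeping careful track of the relaxation term $\tau^2 v_t$ in the momentum equation so that every bound is uniform in $\tau\in(0,1]$. For each pair $(\ell,\alpha)$ with $0\leq\ell\leq 2$ and $0\leq\ell+|\alpha|\leq 4$, apply $\partial_t^{\ell}\mathcal{D}^{\alpha}$ to $(\ref{Sect2_Relaxing_Eq})_1$ and $(\ref{Sect2_Relaxing_Eq})_2$, pair the resulting pressure equation with $\partial_t^{\ell}\mathcal{D}^{\alpha}\xi$ and the resulting momentum equation with $\partial_t^{\ell}\mathcal{D}^{\alpha}v$, and integrate over $\mathbb{R}^3$. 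The cross terms
\begin{equation*}
k_2\,\partial_t^{\ell}\mathcal{D}^{\alpha}\xi\,\nabla\cdot\partial_t^{\ell}\mathcal{D}^{\alpha}v + k_2\,\partial_t^{\ell}\mathcal{D}^{\alpha}v\cdot\nabla\partial_t^{\ell}\mathcal{D}^{\alpha}\xi
\end{equation*}
form a perfect divergence, and the damping supplies the positive contribution $2\int|\partial_t^{\ell}\mathcal{D}^{\alpha}v|^2\,\mathrm{d}x$. This yields the identity
\begin{equation*}
\frac{\mathrm{d}}{\mathrm{d}t}\!\int\!\bigl(|\partial_t^{\ell}\mathcal{D}^{\alpha}\xi|^2 + \tau^2|\partial_t^{\ell}\mathcal{D}^{\alpha}v|^2\bigr)\mathrm{d}x + 2\!\int\!|\partial_t^{\ell}\mathcal{D}^{\alpha}v|^2\,\mathrm{d}x = J_{\ell,\alpha},
\end{equation*}
where $J_{\ell,\alpha}$ collects all commutator and nonlinear contributions produced by $\partial_t^{\ell}\mathcal{D}^{\alpha}$ acting on $\xi\nabla\cdot v$, $v\cdot\nabla\xi$, $v\cdot\nabla v$, and $(\tfrac{1}{\bar\varrho}-\tfrac{1}{\varrho})\nabla\xi$.

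For the intermediate orders $\ell+|\alpha|\leq 3$, $J_{\ell,\alpha}$ is handled directly by Moser-type product estimates combined with the Gagliardo--Nirenberg inequalities (\ref{Sect2_Sobolev_Ineq}), yielding $|J_{\ell,\alpha}|\lesssim\sqrt{\e}(\mathcal{E}_X[\xi]+\mathcal{E}[v])$ since the smallness assumption $\mathcal{E}[\xi,\tau v,\phi,\zeta]\leq\e$ places at least one factor of each product in a small $H^2(\mathbb{R}^3)$ norm. The delicate case is the top order $\ell+|\alpha|=4$: here the highest-derivative factors $\partial_t^{\ell}\mathcal{D}^{\alpha}\xi$ and $\nabla\partial_t^{\ell}\mathcal{D}^{\alpha}v$ cannot all be removed into $L^{\infty}$, and the two offending integrals are
\begin{equation*}
-2\gamma k_1\!\int\!\xi\,(\partial_t^{\ell}\mathcal{D}^{\alpha}\xi)\,\nabla\cdot(\partial_t^{\ell}\mathcal{D}^{\alpha}v)\,\mathrm{d}x \quad\text{and}\quad \frac{2}{k_1}\!\int\!\Bigl(\tfrac{1}{\bar\varrho}-\tfrac{1}{\varrho}\Bigr)(\partial_t^{\ell}\mathcal{D}^{\alpha}v)\cdot\nabla(\partial_t^{\ell}\mathcal{D}^{\alpha}\xi)\,\mathrm{d}x.
\end{equation*}

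I would then reproduce the correction-term trick of Lemma~\ref{Sect3_Energy_Estimate_Lemma2}: instead of estimating $J_{\ell,\alpha}$ itself, estimate the modified quantity
\begin{equation*}
J_{\ell,\alpha} - \frac{\mathrm{d}}{\mathrm{d}t}\!\int\!\frac{\xi}{p}(\partial_t^{\ell}\mathcal{D}^{\alpha}\xi)^2\,\mathrm{d}x - \tau^2\frac{\mathrm{d}}{\mathrm{d}t}\!\int\!\Bigl(\frac{\varrho}{\bar\varrho}-1\Bigr)|\partial_t^{\ell}\mathcal{D}^{\alpha}v|^2\,\mathrm{d}x,
\end{equation*}
and use (\ref{Sect2_Relaxing_Eq}) to replace the two problematic integrands above by expressions containing the brackets
\begin{equation*}
\partial_t^{\ell}\mathcal{D}^{\alpha}\xi_t + k_1\gamma p\,\nabla\cdot\partial_t^{\ell}\mathcal{D}^{\alpha}v \quad\text{and}\quad \nabla\partial_t^{\ell}\mathcal{D}^{\alpha}\xi + k_1\varrho\,\partial_t^{\ell}\mathcal{D}^{\alpha}v + \tau^2\partial_t^{\ell}\mathcal{D}^{\alpha}v_t,
\end{equation*}
each of which, by (\ref{Sect2_Relaxing_Eq}), reduces to a strictly lower-order nonlinear quantity carrying at least one small-Sobolev-norm factor and thus absorbs into $\sqrt{\e}(\mathcal{E}_X[\xi]+\mathcal{E}[v])$. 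Summing the resulting inequality over all admissible $(\ell,\alpha)$ reorganises the correction terms into precisely the definitions of $\mathcal{E}_1[\xi]$ and $\mathcal{E}_1[v]$, producing (\ref{Sect5_Estimate1_toProve}).

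The main obstacle is the top-order $v$-correction. Because the momentum equation now carries $\tau^2 v_t$, substituting via it produces the extra term $\tau^2\partial_t^{\ell}\mathcal{D}^{\alpha}v_t$ absent from Lemma~\ref{Sect3_Energy_Estimate_Lemma2}; this is exactly what forces the $v$-correction $\int(\tfrac{\varrho}{\bar\varrho}-1)|\partial_t^{\ell}\mathcal{D}^{\alpha}v|^2\,\mathrm{d}x$ to appear in $\mathcal{E}_1[v]$ and places the factor $\tau^2$ in front of $\tfrac{\mathrm{d}}{\mathrm{d}t}\mathcal{E}_1[v]$ on the left-hand side. One must verify that this extra piece enters cleanly as $\tfrac{\tau^2}{2}\tfrac{\mathrm{d}}{\mathrm{d}t}|\cdot|^2$, so that it can be moved to the left-hand side without disturbing the $\tau$-independent damping $2\mathcal{E}_1[v]$, thereby keeping the whole estimate uniform as $\tau\to 0$.
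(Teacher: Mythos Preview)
Your proposal is correct and follows essentially the same route as the paper's own proof. One small sign slip to fix: the $v$-correction in your modified quantity should enter as $+\,\tau^2\tfrac{\mathrm d}{\mathrm dt}\int(\tfrac{\varrho}{\bar\varrho}-1)|\partial_t^{\ell}\mathcal D^{\alpha}v|^2\,\mathrm dx$ (not $-$), consistently with the $+$ sign in the definition of $\mathcal{E}_1[v]$; with that adjustment the cancellation and the final summation go through exactly as in the paper.
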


\begin{proof}
Let $(\ref{Sect2_Relaxing_Eq})\cdot(\xi, v)$, we get
\begin{equation}\label{Sect5_ZeroOrder_1}
\begin{array}{ll}
(|\xi|^2+\tau^2|v|^2)_t + 2k_2 \xi\nabla\cdot v + 2k_2 v\cdot\nabla\xi + 2 |v|^2 \\[6pt]
= -2\gamma k_1 \xi^2\nabla\cdot v - 2k_1 \xi v\cdot\nabla \xi - 2 k_1 \tau^2 v\cdot\nabla v\cdot v
+ \frac{2}{k_1} (\frac{1}{\bar{\varrho}}- \frac{1}{\varrho})\nabla\xi\cdot v.
\end{array}
\end{equation}

Integrate $(\ref{Sect5_ZeroOrder_1})$ in $\mathbb{R}^3$ and note that
$\int\limits_{\mathbb{R}^3}\nabla\cdot(\xi v) \,\mathrm{d}x = 0$, we get
\begin{equation}\label{Sect5_ZeroOrder_3}
\begin{array}{ll}
\frac{\mathrm{d}}{\mathrm{d} t}
\int\limits_{\mathbb{R}^3}|\xi|^2+\tau^2|v|^2 \,\mathrm{d}x + 2\int\limits_{\mathbb{R}^3}|v|^2 \,\mathrm{d}x \\[9pt]
= \int\limits_{\mathbb{R}^3} 2\gamma k_1 v\cdot\nabla(\xi^2)
- 2k_1 \xi v\cdot\nabla \xi - 2 k_1 \tau^2 v\cdot\nabla v\cdot v
+ \frac{2}{k_1} (\frac{1}{\bar{\varrho}}- \frac{1}{\varrho})\nabla\xi\cdot v \,\mathrm{d}x \\[9pt]
\lem \sqrt{\e}\|\nabla\xi\|_{L^2(\mathbb{R}^3)}\|v\|_{L^2(\mathbb{R}^3)}
+\sqrt{\e}\|v\|_{L^2(\mathbb{R}^3)}^2 \\[9pt]
\lem \sqrt{\e}(\mathcal{E}_X[\xi](t) + \mathcal{E}[v](t)).
\end{array}
\end{equation}

Apply $\partial_t^{\ell}\mathcal{D}^{\alpha}$ to $(\ref{Sect2_Relaxing_Eq})$, where $0\leq \ell\leq 2, 1\leq\ell+|\alpha|\leq 4$, then eliminate $\tau$ from both sides of the equations, we get
\begin{equation}\label{Sect5_Estimate2_1}
\left\{\begin{array}{ll}
(\partial_t^{\ell}\mathcal{D}^{\alpha}\xi)_t + k_2\nabla\cdot(\partial_t^{\ell}\mathcal{D}^{\alpha} v)
= -\gamma k_1\partial_t^{\ell}\mathcal{D}^{\alpha}(\xi\nabla\cdot v)
- k_1 \partial_t^{\ell}\mathcal{D}^{\alpha}(v\cdot\nabla \xi), \\[8pt]
\tau^2(\partial_t^{\ell}\mathcal{D}^{\alpha} v)_t + k_2 \nabla(\partial_t^{\ell}\mathcal{D}^{\alpha}\xi)
+ \partial_t^{\ell}\mathcal{D}^{\alpha} v \\[6pt]\hspace{4.05cm}
= - k_1 \tau^2\partial_t^{\ell}\mathcal{D}^{\alpha}(v\cdot\nabla v)
+ \frac{1}{k_1} \partial_t^{\ell}\mathcal{D}^{\alpha}[(\frac{1}{\bar{\varrho}}- \frac{1}{\varrho})\nabla\xi].
\end{array}\right.
\end{equation}

Let $(\ref{Sect5_Estimate2_1})\cdot(\partial_t^{\ell}\mathcal{D}^{\alpha}\xi, \partial_t^{\ell}\mathcal{D}^{\alpha} v)$, we get
\begin{equation}\label{Sect5_Estimate2_2}
\begin{array}{ll}
(|\partial_t^{\ell}\mathcal{D}^{\alpha}\xi|^2+\tau^2|\partial_t^{\ell}\mathcal{D}^{\alpha} v|^2)_t
+ 2k_2 \partial_t^{\ell}\mathcal{D}^{\alpha}\xi\nabla\cdot(\partial_t^{\ell}\mathcal{D}^{\alpha} v) \\[6pt]\quad
+ 2k_2 \partial_t^{\ell}\mathcal{D}^{\alpha} v\cdot\nabla(\partial_t^{\ell}\mathcal{D}^{\alpha}\xi)
+ 2 |\partial_t^{\ell}\mathcal{D}^{\alpha} v|^2 \\[6pt]
= -2\gamma k_1 (\partial_t^{\ell}\mathcal{D}^{\alpha}\xi)\partial_t^{\ell}\mathcal{D}^{\alpha}(\xi\nabla\cdot v)
- 2k_1 (\partial_t^{\ell}\mathcal{D}^{\alpha}\xi)\partial_t^{\ell}\mathcal{D}^{\alpha}(v\cdot\nabla \xi) \\[6pt]\quad
- 2 k_1 \tau^2 (\partial_t^{\ell}\mathcal{D}^{\alpha} v)\cdot\partial_t^{\ell}\mathcal{D}^{\alpha}(v\cdot\nabla v)
+ \frac{2}{k_1} (\partial_t^{\ell}\mathcal{D}^{\alpha} v)\cdot
\partial_t^{\ell}\mathcal{D}^{\alpha}[(\frac{1}{\bar{\varrho}}- \frac{1}{\varrho})\nabla\xi].
\end{array}
\end{equation}

Integrate $(\ref{Sect5_Estimate2_2})$ in $\mathbb{R}^3$ and note that
$\int\limits_{\mathbb{R}^3}\nabla\cdot(\partial_t^{\ell}\mathcal{D}^{\alpha}\xi\partial_t^{\ell}\mathcal{D}^{\alpha} v) \,\mathrm{d}x = 0$, we get
\begin{equation}\label{Sect5_Estimate2_5}
\begin{array}{ll}
\frac{\mathrm{d}}{\mathrm{d} t}
\int\limits_{\mathbb{R}^3}|\partial_t^{\ell}\mathcal{D}^{\alpha}\xi|^2+\tau^2|\partial_t^{\ell}\mathcal{D}^{\alpha} v|^2 \,\mathrm{d}x
+ 2\int\limits_{\mathbb{R}^3}|\partial_t^{\ell}\mathcal{D}^{\alpha} v|^2 \,\mathrm{d}x \\[10pt]
= \int\limits_{\mathbb{R}^3} -2\gamma k_1 (\partial_t^{\ell}\mathcal{D}^{\alpha}\xi)\partial_t^{\ell}\mathcal{D}^{\alpha}(\xi\nabla\cdot v)
- 2k_1 (\partial_t^{\ell}\mathcal{D}^{\alpha}\xi)\partial_t^{\ell}\mathcal{D}^{\alpha}(v\cdot\nabla \xi) \\[10pt]\quad
- 2 k_1 \tau^2 (\partial_t^{\ell}\mathcal{D}^{\alpha} v)\cdot\partial_t^{\ell}\mathcal{D}^{\alpha}(v\cdot\nabla v)
+ \frac{2}{k_1} (\partial_t^{\ell}\mathcal{D}^{\alpha} v)\cdot
\partial_t^{\ell}\mathcal{D}^{\alpha}[(\frac{1}{\bar{\varrho}}- \frac{1}{\varrho})\nabla\xi] \,\mathrm{d}x
:= I_7.
\end{array}
\end{equation}

\vspace{0.3cm}
When $1\leq\ell+|\alpha|\leq 3$, it is easy to check that
$I_7\lem \sqrt{\e}(\mathcal{E}_X[\xi](t)+\mathcal{E}[v](t))$. \\[6pt]
\indent
When $\ell+|\alpha|=4$, we estimate the quantity
$I_7 - \frac{\mathrm{d}}{\mathrm{d}t}\int\limits_{\mathbb{R}^3}\frac{\xi}{p}(\partial_t^{\ell}\mathcal{D}^{\alpha}\xi)^2 \,\mathrm{d}x
+ \tau^2\frac{\mathrm{d}}{\mathrm{d}t}\int\limits_{\mathbb{R}^3}(\frac{\varrho}{\bar{\varrho}}-1)
|\partial_t^{\ell}\mathcal{D}^{\alpha} v|^2 \,\mathrm{d}x$, then
\begin{equation}\label{Sect5_Estimate2_6}
\begin{array}{ll}
I_7 - \frac{\mathrm{d}}{\mathrm{d}t}\int\limits_{\mathbb{R}^3}\frac{\xi}{p}(\partial_t^{\ell}\mathcal{D}^{\alpha}\xi)^2 \,\mathrm{d}x
+ \tau^2\frac{\mathrm{d}}{\mathrm{d}t}\int\limits_{\mathbb{R}^3}(\frac{\varrho}{\bar{\varrho}}-1)
|\partial_t^{\ell}\mathcal{D}^{\alpha} v|^2 \,\mathrm{d}x \\[8pt]

= -2\gamma k_1 \int\limits_{\mathbb{R}^3}(\partial_t^{\ell}\mathcal{D}^{\alpha}\xi)\xi\nabla\cdot (\partial_t^{\ell}\mathcal{D}^{\alpha} v)\,\mathrm{d}x
- 2k_1 \int\limits_{\mathbb{R}^3}(\partial_t^{\ell}\mathcal{D}^{\alpha}\xi) v\cdot\nabla (\partial_t^{\ell}\mathcal{D}^{\alpha}\xi)\,\mathrm{d}x \\[8pt]\quad
- 2 k_1 \tau^2\int\limits_{\mathbb{R}^3}v\cdot\nabla (\partial_t^{\ell}\mathcal{D}^{\alpha} v)\cdot (\partial_t^{\ell}\mathcal{D}^{\alpha} v)\,\mathrm{d}x
+ \frac{2}{k_1} \int\limits_{\mathbb{R}^3}(\frac{1}{\bar{\varrho}}
- \frac{1}{\varrho})(\partial_t^{\ell}\mathcal{D}^{\alpha} v)\cdot\nabla(\partial_t^{\ell}\mathcal{D}^{\alpha}\xi)\,\mathrm{d}x \\[8pt]\quad
+ 2\tau^2\int\limits_{\mathbb{R}^3}(\frac{\varrho}{\bar{\varrho}}-1)(\partial_t^{\ell}\mathcal{D}^{\alpha} v)\cdot (\partial_t^{\ell}\mathcal{D}^{\alpha} v_t) \,\mathrm{d}x
+ \tau^2\int\limits_{\mathbb{R}^3}\frac{\zeta_t}{\bar{\varrho}}|\partial_t^{\ell}\mathcal{D}^{\alpha} v|^2 \,\mathrm{d}x
\\[8pt]\quad
- 2\int\limits_{\mathbb{R}^3}\frac{\xi}{p}(\partial_t^{\ell}\mathcal{D}^{\alpha}\xi)
(\partial_t^{\ell}\mathcal{D}^{\alpha}\xi_t) \,\mathrm{d}x
- \int\limits_{\mathbb{R}^3}\partial_t(\frac{\xi}{p})(\partial_t^{\ell}\mathcal{D}^{\alpha}\xi)^2 \,\mathrm{d}x
 \\[8pt]

\lem \sqrt{\e}(\mathcal{E}_X[\xi](t)+\mathcal{E}[v](t))
+ k_1 \int\limits_{\mathbb{R}^3}(|\partial_t^{\ell}\mathcal{D}^{\alpha}\xi|^2
+\tau^2|\partial_t^{\ell}\mathcal{D}^{\alpha} v|^2) \nabla\cdot v\,\mathrm{d}x \\[8pt]\quad
-2\gamma k_1 \int\limits_{\mathbb{R}^3}\xi(\partial_t^{\ell}\mathcal{D}^{\alpha}\xi)\nabla\cdot (\partial_t^{\ell}\mathcal{D}^{\alpha} v)\,\mathrm{d}x

+ \frac{2}{k_1} \int\limits_{\mathbb{R}^3}(\frac{1}{\bar{\varrho}}- \frac{1}{\varrho})(\partial_t^{\ell}\mathcal{D}^{\alpha} v)\cdot\nabla(\partial_t^{\ell}\mathcal{D}^{\alpha}\xi)\,\mathrm{d}x
\\[8pt]\quad
- 2\int\limits_{\mathbb{R}^3}\frac{\xi}{p}
(\partial_t^{\ell}\mathcal{D}^{\alpha}\xi)(\partial_t^{\ell}\mathcal{D}^{\alpha}\xi_t) \,\mathrm{d}x
+ 2\tau^2\int\limits_{\mathbb{R}^3}(\frac{\varrho}{\bar{\varrho}}-1)(\partial_t^{\ell}\mathcal{D}^{\alpha} v)\cdot (\partial_t^{\ell}\mathcal{D}^{\alpha} v_t) \,\mathrm{d}x \\[10pt]

\lem -2\int\limits_{\mathbb{R}^3}\frac{\xi}{p}(\partial_t^{\ell}\mathcal{D}^{\alpha}\xi)
[\partial_t^{\ell}\mathcal{D}^{\alpha}\xi_t +k_1\gamma p\nabla\cdot (\partial_t^{\ell}\mathcal{D}^{\alpha} v)]\,\mathrm{d}x
\\[8pt]\quad
+ \frac{2}{k_1} \int\limits_{\mathbb{R}^3}(\frac{1}{\bar{\varrho}}- \frac{1}{\varrho})(\partial_t^{\ell}\mathcal{D}^{\alpha} v)\cdot[\nabla(\partial_t^{\ell}\mathcal{D}^{\alpha}\xi)
+k_1\tau^2\varrho (\partial_t^{\ell}\mathcal{D}^{\alpha} v_t)]\,\mathrm{d}x \\[8pt]\quad
+ \sqrt{\e}(\mathcal{E}_X[\xi](t)+\mathcal{E}[v](t)).
\end{array}
\end{equation}

Apply $\partial_t^{\ell}\mathcal{D}^{\alpha}$ to $(\ref{Sect2_Relaxing_Eq})_1$, where $0\leq \ell\leq 2,
\ell+|\alpha|=4$, we get
\begin{equation}\label{Sect5_Estimate2_7}
\begin{array}{ll}
\partial_t^{\ell}\mathcal{D}^{\alpha}\xi_t + k_1\gamma p\nabla\cdot (\partial_t^{\ell}\mathcal{D}^{\alpha} v) = -k_1\partial_t^{\ell}\mathcal{D}^{\alpha}(v\cdot\nabla\xi)  \\[8pt]\hspace{4.5cm}
- k_1\gamma\sum\limits_{\ell_1+|\alpha_1|>0}\partial_t^{\ell_1}\mathcal{D}^{\alpha_1}\xi \nabla\cdot (\partial_t^{\ell_2}\mathcal{D}^{\alpha_2} v).
\end{array}
\end{equation}

Plug $(\ref{Sect5_Estimate2_7})$ into the following integral, we get
\begin{equation}\label{Sect5_Estimate2_8}
\begin{array}{ll}
\int\limits_{\mathbb{R}^3}\frac{\xi}{p}(\partial_t^{\ell}\mathcal{D}^{\alpha}\xi)
(\partial_t^{\ell}\mathcal{D}^{\alpha}\xi_t+k_1\gamma p\nabla\cdot (\partial_t^{\ell}\mathcal{D}^{\alpha} v))\,\mathrm{d}x
= \int\limits_{\mathbb{R}^3}\frac{\xi}{p}(\partial_t^{\ell}\mathcal{D}^{\alpha}\xi)[R.H.S.\ of\ (\ref{Sect5_Estimate2_7})]\,\mathrm{d}x \\[9pt]

\lem \sqrt{\e}(\mathcal{E}_X[\xi](t) + \mathcal{E}[v](t))
+ \frac{k_1}{2}\int\limits_{\mathbb{R}^3}|\partial_t^{\ell}\mathcal{D}^{\alpha}\xi|^2 \nabla\cdot(\frac{\xi}{p}v) \,\mathrm{d}x
\\[6pt]
\lem \sqrt{\e}(\mathcal{E}_X[\xi](t) + \mathcal{E}[v](t)).
\end{array}
\end{equation}

Apply $\partial_t^{\ell}\mathcal{D}^{\alpha}$ to
$k_1\tau^2 v_t + k_1^2 \tau^2 v\cdot\nabla v + k_1 v + \frac{1}{\varrho}\nabla\xi =0$, where $0\leq \ell\leq 2,
\ell+|\alpha|=4$, we get
\begin{equation}\label{Sect5_Estimate2_9}
\begin{array}{ll}
\frac{1}{\varrho}\nabla(\partial_t^{\ell}\mathcal{D}^{\alpha}\xi) + k_1\tau^2 (\partial_t^{\ell}\mathcal{D}^{\alpha} v_t) \\[6pt]
= -k_1 \partial_t^{\ell}\mathcal{D}^{\alpha} v -k_1^2 \tau^2 \partial_t^{\ell}\mathcal{D}^{\alpha}(v\cdot\nabla v)
-\sum\limits_{\ell_1+|\alpha_1|>0}\partial_t^{\ell_1}\mathcal{D}^{\alpha_1}(\frac{1}{\varrho}) \partial_t^{\ell_2}\mathcal{D}^{\alpha_2}\nabla\xi.
\end{array}
\end{equation}

Plug $(\ref{Sect5_Estimate2_9})$ into the following integral, we get
\begin{equation}\label{Sect5_Estimate2_10}
\begin{array}{ll}
\int\limits_{\mathbb{R}^3}(\frac{1}{\bar{\varrho}}- \frac{1}{\varrho})(\partial_t^{\ell}\mathcal{D}^{\alpha} v)\cdot[\nabla(\partial_t^{\ell}\mathcal{D}^{\alpha}\xi)
+k_1\tau^2\varrho \partial_t^{\ell}\mathcal{D}^{\alpha} v_t]\,\mathrm{d}x \\[6pt]
= \int\limits_{\mathbb{R}^3}(\frac{1}{\bar{\varrho}}- \frac{1}{\varrho})(\partial_t^{\ell}\mathcal{D}^{\alpha} v)\cdot \varrho[R.H.S.\ of\ (\ref{Sect5_Estimate2_9})]\,\mathrm{d}x \\[6pt]

\lem \frac{k_1^2 \tau}{2}\int\limits_{\mathbb{R}^3}|\partial_t^{\ell}\mathcal{D}^{\alpha} v|^2 \nabla\cdot[(\frac{\varrho}{\bar{\varrho}}-1) \tau v] \,\mathrm{d}x
+ \sqrt{\e}(\mathcal{E}_X[\xi](t) + \mathcal{E}[v](t)) \\[6pt]

\lem \sqrt{\e}(\mathcal{E}_X[\xi](t) + \mathcal{E}[v](t)).
\end{array}
\end{equation}

Plug $(\ref{Sect5_Estimate2_8})$ and $(\ref{Sect5_Estimate2_10})$ into $(\ref{Sect5_Estimate2_6})$, we get
\begin{equation}\label{Sect5_Estimate2_11}
\begin{array}{ll}
I_7 - \frac{\mathrm{d}}{\mathrm{d}t}\int\limits_{\mathbb{R}^3}\frac{\xi}{p}(\partial_t^{\ell}\mathcal{D}^{\alpha}\xi)^2 \,\mathrm{d}x
+ \tau^2\frac{\mathrm{d}}{\mathrm{d}t}\int\limits_{\mathbb{R}^3}(\frac{\varrho}{\bar{\varrho}}-1)|\partial_t^{\ell}\mathcal{D}^{\alpha} v|^2 \,\mathrm{d}x
\lem \sqrt{\e}(\mathcal{E}_X[\xi](t) + \mathcal{E}[v](t)).
\end{array}
\end{equation}

After Summing $\ell$ and $\alpha$, we have
\begin{equation}\label{Sect5_Estimate2_12}
\begin{array}{ll}
\frac{\mathrm{d}}{\mathrm{d} t}\left(\sum\limits_{0\leq\ell\leq 2,0\leq \ell+|\alpha|\leq 4}\
\int\limits_{\mathbb{R}^3}|\partial_t^{\ell}\mathcal{D}^{\alpha}\xi|^2+\tau^2|\partial_t^{\ell}\mathcal{D}^{\alpha} v|^2 \,\mathrm{d}x \right. \\[10pt]\quad
\left. -\sum\limits_{0\leq\ell\leq 2,\ell+|\alpha|=4}\
\int\limits_{\mathbb{R}^3}\frac{\xi}{p} (\partial_t^{\ell}\mathcal{D}^{\alpha}\xi)^2\,\mathrm{d}x
+\tau^2 \sum\limits_{0\leq\ell\leq 2,\ell+|\alpha|=4}\
\int\limits_{\mathbb{R}^3}(\frac{\varrho}{\bar{\varrho}}- 1)|\partial_t^{\ell}\mathcal{D}^{\alpha} v|^2 \,\mathrm{d}x \right) \\[15pt]\quad
+ 2 \sum\limits_{0\leq\ell\leq 2,0\leq \ell+|\alpha|\leq 4}\
\int\limits_{\mathbb{R}^3}|\partial_t^{\ell}\mathcal{D}^{\alpha} v|^2 \,\mathrm{d}x \\[14pt]
\lem \sqrt{\e}(\mathcal{E}_X[\xi](t) + \mathcal{E}[v](t)).
\end{array}
\end{equation}

Then
\begin{equation}\label{Sect5_Estimate2_13}
\begin{array}{ll}
\frac{\mathrm{d}}{\mathrm{d} t}\left(\sum\limits_{0\leq\ell\leq 2,0\leq \ell+|\alpha|\leq 4}\
\int\limits_{\mathbb{R}^3}|\partial_t^{\ell}\mathcal{D}^{\alpha}\xi|^2 \,\mathrm{d}x
-\sum\limits_{0\leq\ell\leq 2,\ell+|\alpha|=4}\
\int\limits_{\mathbb{R}^3}\frac{\xi}{p} (\partial_t^{\ell}\mathcal{D}^{\alpha}\xi)^2\,\mathrm{d}x\right) \\[13pt]

+ \tau^2\frac{\mathrm{d}}{\mathrm{d} t}\left(
\sum\limits_{0\leq\ell\leq 2,0\leq\ell+|\alpha|\leq 4}\
\int\limits_{\mathbb{R}^3}|\partial_t^{\ell}\mathcal{D}^{\alpha} v|^2 \,\mathrm{d}x
+\sum\limits_{0\leq\ell\leq 2,\ell+|\alpha|=4}\
\int\limits_{\mathbb{R}^3}(\frac{\varrho}{\bar{\varrho}}- 1)|\partial_t \mathcal{D}^{\alpha}v|^2 \,\mathrm{d}x \right) \\[13pt]

+ 2 \left(\sum\limits_{0\leq\ell\leq 2,0\leq \ell+|\alpha|\leq 4}\
\int\limits_{\mathbb{R}^3}|\partial_t^{\ell}\mathcal{D}^{\alpha} v|^2 \,\mathrm{d}x
+\sum\limits_{0\leq\ell\leq 2,\ell+|\alpha|=4}\
\int\limits_{\mathbb{R}^3}(\frac{\varrho}{\bar{\varrho}}- 1)|\partial_t^{\ell}\mathcal{D}^{\alpha}v|^2 \,\mathrm{d}x \right)
\\[15pt]

\lem \sqrt{\e}(\mathcal{E}_X[\xi](t) + \mathcal{E}[v](t)).
\end{array}
\end{equation}

By $(\ref{Sect5_ZeroOrder_3})+(\ref{Sect5_Estimate2_13})$, we obtain
\begin{equation}\label{Sect5_Estimate2_14}
\frac{\mathrm{d}}{\mathrm{d}t}\mathcal{E}_1[\xi](t) + \tau^2 \frac{\mathrm{d}}{\mathrm{d}t}\mathcal{E}_1[v](t)
+ 2 \mathcal{E}_1[v](t) \leq C\sqrt{\e}(\mathcal{E}_X[\xi](t) + \mathcal{E}[v](t)).
\end{equation}
Thus, Lemma $\ref{Sect5_Energy_Estimate_Lemma1}$ is proved.
\end{proof}

The structure of the equations $(\ref{Sect2_Relaxing_Eq})$ implies
$\mathcal{E}_X[\xi](t)$ can be estimates by $\mathcal{E}[v](t)$, as the following lemma stated:
\begin{lemma}\label{Sect5_Energy_Ineq}
For any given $T\in (0,+\infty),\tau\in(0,1]$, if
\begin{equation*}
\sup\limits_{0\leq t\leq T} \mathcal{E}[\xi,\tau v,\phi,\zeta](t) \leq\e,
\end{equation*}
there exists $\e_2>0$, which is independent of $(\xi_0,v_0,\phi_0)$, such that if $0<\e\ll \min\{1,\e_2\}$, then for $\forall t\in [0,T]$,
\begin{equation}\label{Sect5_Energy_Ineq_toProve}
\mathcal{E}_X[\xi](t) \leq c_6\mathcal{E}[v](t),
\end{equation}
\end{lemma}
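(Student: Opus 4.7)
The argument closely parallels that of Lemma \ref{Sect3_Energy_Ineq}, but must handle two new features: (i) extra $\tau^2$ terms in the relaxing momentum equation compared to the relaxed one, and (ii) the additional second-order time derivatives that appear in $\mathcal{E}_X[\xi]$ but were absent from $\mathcal{F}_X[\xi]$.

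First, I would solve algebraically for $\nabla\xi$ from the momentum equation $(\ref{Sect2_Relaxing_Eq})_2$. Using the identity $k_2 = 1/(k_1\bar{\varrho})$ to combine the two $\nabla\xi$ contributions, I obtain
\begin{equation*}
\nabla\xi = -k_1\varrho\, v - k_1\tau^2\varrho\, v_t - k_1^2\tau^2\varrho\,(v\cdot\nabla v).
\end{equation*}
For each multi-index $\alpha$ with $1\leq|\alpha|\leq 4$, applying $\mathcal{D}^{\alpha-1}$, taking $L^2$ norms and using $\tau\in(0,1]$ together with Moser-type product inequalities and Lemma \ref{Sect3_Varrho_Lemma} yields
\begin{equation*}
\|\mathcal{D}^{\alpha}\xi\|_{L^2(\mathbb{R}^3)}^2 \lem \mathcal{E}[v](t) + \e\bigl(\mathcal{E}[v](t)+\mathcal{E}_X[\xi](t)\bigr),
\end{equation*}
since every derivative of $v$ appearing on the right lies within the range controlled by $\mathcal{E}[v]$; in particular the $v_t$-term requires only $v_t\in H^{|\alpha|-1}\subseteq H^3$, which is permitted by the $(\ell',|\alpha'|)=(1,\leq 3)$ slice of $\mathcal{E}[v]$.

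Second, for index pairs with $\ell\geq 1$ I would use the pressure equation $(\ref{Sect2_Relaxing_Eq})_1$ rewritten as $\xi_t = -k_1\gamma\, p\,\nabla\cdot v - k_1\, v\cdot\nabla\xi$, apply $\partial_t^{\ell-1}\mathcal{D}^{\alpha}$, and take $L^2$ norms. Moser-type estimates then give
\begin{equation*}
\|\partial_t^{\ell}\mathcal{D}^{\alpha}\xi\|_{L^2(\mathbb{R}^3)}^2 \lem \mathcal{E}[v](t) + \e\bigl(\mathcal{E}[v](t)+\mathcal{E}_X[\xi](t)\bigr).
\end{equation*}
For $\ell=2$ this requires one additional time derivative of the nonlinearity, producing at worst $v_t\in H^{|\alpha|+1}\subseteq H^3$, which is again inside the $\mathcal{E}[v]$ budget since $|\alpha|\leq 2$. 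The crucial routing choice is to \emph{avoid} using the momentum equation for any $\ell\geq 1$; going that way would introduce $v_{ttt}$, which $\mathcal{E}[v]$ does not control.

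Summing both steps over the admissible range $0\leq\ell\leq 2$, $0<\ell+|\alpha|\leq 4$ and invoking Lemma \ref{Sect3_Varrho_Lemma} for the density factors, I arrive at
\begin{equation*}
\mathcal{E}_X[\xi](t) \leq C_1\mathcal{E}[v](t) + C_1\e\bigl(\mathcal{E}[v](t)+\mathcal{E}_X[\xi](t)\bigr)
\end{equation*}
for some $C_1>0$. Taking $\e_2 = 1/(2C_1)$ and restricting $\e\leq \e_2$ lets me absorb the $\mathcal{E}_X[\xi]$-term on the right, producing $\mathcal{E}_X[\xi](t)\leq c_6\,\mathcal{E}[v](t)$ with $c_6 = 2C_1(1+\e_2)$. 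The principal technical hurdle is the bookkeeping that selects momentum-equation estimates for pure-spatial indices and continuity-equation estimates for every time-containing index, so that all $v$-derivatives stay within the $\mathcal{E}[v]$ budget while the $\tau^2$ factors, harmless in $(0,1]$, never cause trouble.
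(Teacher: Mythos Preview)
Your proposal is correct and follows essentially the same route as the paper: use the momentum equation (solved for $\nabla\xi$) to control the purely spatial part of $\mathcal{E}_X[\xi]$, and use the pressure equation $\xi_t=-k_1\gamma p\,\nabla\cdot v-k_1 v\cdot\nabla\xi$ (and its time derivative) to control the $\ell=1$ and $\ell=2$ pieces, then absorb the small cross term. Your explicit remark that routing the $\ell\ge 1$ terms through the momentum equation would generate an uncontrolled $v_{ttt}$ is a nice clarification the paper leaves implicit.
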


\begin{proof}
Apply $\mathcal{D}^{\alpha}$ to $\nabla\xi = - k_1\tau^2\varrho v_t - k_1^2\tau^2 v\cdot\nabla v - k_1\varrho v$,
where $0\leq |\alpha|\leq 3$, we get
\begin{equation}\label{Sect5_Energy_Ineq_1}
\begin{array}{ll}
\mathcal{D}^{\alpha}\nabla\xi = - k_1\tau^2\mathcal{D}^{\alpha}(\varrho v_t)
- k_1^2\tau^2 \mathcal{D}^{\alpha}(\varrho v\cdot\nabla v) - k_1 \mathcal{D}^{\alpha}(\varrho v), \\[6pt]

\|\mathcal{D}^{\alpha}\nabla\xi\|_{L^2(\mathbb{R}^3)}^2 \lem
\tau^4\|\mathcal{D}^{\alpha}(\varrho v_t)\|_{L^2(\mathbb{R}^3)}^2
+ \tau^4\|\mathcal{D}^{\alpha}(\varrho v\cdot\nabla v)\|_{L^2(\mathbb{R}^3)}^2
+ \|\mathcal{D}^{\alpha}(\varrho v)\|_{L^2(\mathbb{R}^3)}^2 \\[6pt]\hspace{2.3cm}

\lem \|\mathcal{D}^{\alpha}v_t\|_{L^2(\mathbb{R}^3)}^2 + \|\mathcal{D}^{\alpha}v\|_{L^2(\mathbb{R}^3)}^2
+ \mathcal{E}[\zeta]\mathcal{E}[v](t) + \mathcal{E}[\tau v](t)\mathcal{E}[v](t) \\[6pt]\hspace{2.7cm}
+ \mathcal{E}[\tau v](t)\mathcal{E}[\zeta]\mathcal{E}[v](t).
\end{array}
\end{equation}

Apply $\mathcal{D}^{\alpha}$ to $\xi_t = - k_1v\cdot\nabla\xi - k_1\gamma p\nabla\cdot v$,
where $0\leq|\alpha|\leq 3$, we get
\begin{equation}\label{Sect5_Energy_Ineq_2}
\begin{array}{ll}
\mathcal{D}^{\alpha}\xi_t = - k_1\mathcal{D}^{\alpha}(v\cdot\nabla\xi)
- k_1\gamma \mathcal{D}^{\alpha}(p\nabla\cdot v), \\[6pt]

\|\mathcal{D}^{\alpha}\xi_t\|_{L^2(\mathbb{R}^3)}^2 \lem
\|\mathcal{D}^{\alpha}(v\cdot\nabla\xi)\|_{L^2(\mathbb{R}^3)}^2
+ \|\mathcal{D}^{\alpha}(p \nabla\cdot v)\|_{L^2(\mathbb{R}^3)}^2 \\[6pt]\hspace{2.05cm}

\lem \|\mathcal{D}^{\alpha}\nabla\cdot v\|_{L^2(\mathbb{R}^3)}^2
+ \mathcal{E}_X[\xi]\mathcal{E}[v](t).
\end{array}
\end{equation}

Apply $\mathcal{D}^{\alpha}$ to $\xi_{tt} = - k_1v_t\cdot\nabla\xi - k_1v\cdot\nabla\xi_t 
- k_1\gamma p\nabla\cdot v_t - k_1\gamma \xi_t\nabla\cdot v$,
where $0\leq|\alpha|\leq 2$, we get
\begin{equation}\label{Sect5_Energy_Ineq_3}
\begin{array}{ll}
\mathcal{D}^{\alpha}\xi_{tt} = - k_1\mathcal{D}^{\alpha}(v_t\cdot\nabla\xi)- k_1\mathcal{D}^{\alpha}(v\cdot\nabla\xi_t)
- k_1\gamma \mathcal{D}^{\alpha}(p\nabla\cdot v_t)- k_1\gamma \mathcal{D}^{\alpha}(\xi_t\nabla\cdot v), \\[6pt]

\|\mathcal{D}^{\alpha}\xi_{tt}\|_{L^2(\mathbb{R}^3)}^2 
\lem \|\mathcal{D}^{\alpha}\nabla\cdot v_t\|_{L^2(\mathbb{R}^3)}^2
+ \mathcal{E}_X[\xi]\mathcal{E}[v](t).
\end{array}
\end{equation}

By $(\ref{Sect5_Energy_Ineq_1})+(\ref{Sect5_Energy_Ineq_2})+(\ref{Sect5_Energy_Ineq_3})$, we have
\begin{equation}\label{Sect5_Energy_Ineq_4}
\begin{array}{ll}
\mathcal{E}_X[\xi](t) \leq C_8\mathcal{E}[v](t) + C_8\mathcal{E}[\zeta]\mathcal{E}[v](t)
+ C_8\mathcal{E}[\tau v](t)\mathcal{E}[v](t) \\[6pt]\hspace{1.7cm}
+ C_8\mathcal{E}[\tau v](t)\mathcal{E}[\zeta]\mathcal{E}[v](t) + C_8\mathcal{E}_X[\xi]\mathcal{E}[v](t) \\[5pt]\hspace{1.4cm}
\leq C_8\mathcal{E}_X[\xi]\mathcal{E}[v](t)
+ C_8(1+ 2\e+\e^2)\mathcal{E}[v](t).
\end{array}
\end{equation}
for some $C_8>0$

Let $\e_2\leq \frac{1}{2C_8}$, $c_6=2C_8(1+2\e_2 +\e_2^2)\leq 2C_8+2+\frac{1}{2C_8}$.
Then $\mathcal{E}_X[\xi](t) \leq c_6\mathcal{E}[v](t)$.
Thus, Lemma $\ref{Sect5_Energy_Ineq}$ is proved.
\end{proof}

Based on the above a priori estimates, we prove not only the uniform $L^{\infty}$ bound of $\mathcal{E}[\xi,\tau v](t)$, but also the uniform bound of $\int\limits_0^{\infty}\mathcal{E}[v](s)\,\mathrm{d}s$.
\begin{lemma}\label{Sect5_Decay_Lemma}
For any given $T\in (0,+\infty),\tau\in(0,1]$, if
\begin{equation*}
\sup\limits_{0\leq t\leq T} \mathcal{E}[\xi,\tau v,\phi,\zeta](t) \leq\e,
\end{equation*}
there exists $\e_3>0$, which is independent of $(\xi_0,v_0,\phi_0)$, such that if $0<\e\ll \min\{1,\e_1,\e_2,\e_3\}$, then for $\forall t\in [0,T]$,
\begin{equation}\label{Sect5_Decay_Lemma_Eq}
\begin{array}{ll}
\mathcal{E}[\xi,\tau v](t) \leq \beta_4,\\[8pt]
\int\limits_0^{T}\mathcal{E}[v](s)\,\mathrm{d}s \leq \beta_4,
\end{array}
\end{equation}
for some $\beta_4>0$ which is independent of $\tau$.
\end{lemma}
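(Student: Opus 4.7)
The strategy mirrors the proof of Lemma 3.6: combine the three a priori results already established (Lemmas 5.1--5.3) into a closed differential inequality for the single quantity $\mathcal{E}_1[\xi] + \tau^2\mathcal{E}_1[v]$, integrate it once in time, and convert back to $\mathcal{E}[\xi,\tau v]$ via the energy equivalence. The crux is that the damping term $2\mathcal{E}_1[v]$ on the left-hand side of Lemma 5.2 is \emph{not} weighted by $\tau^2$, so after closing the estimate it provides the uniform-in-$\tau$ dissipation needed for both conclusions simultaneously.

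First I would insert the bound $\mathcal{E}_X[\xi]\le c_6\mathcal{E}[v]$ of Lemma 5.3 into the right-hand side of Lemma 5.2 to obtain
\[
\frac{d}{dt}\bigl(\mathcal{E}_1[\xi]+\tau^2\mathcal{E}_1[v]\bigr)+2\mathcal{E}_1[v]\le C(1+c_6)\sqrt{\e}\,\mathcal{E}[v].
\]
Then the equivalence $\mathcal{E}[v]\le c_4^{-1}\mathcal{E}_1[v]$ from Lemma 5.1 lets me bound the right-hand side by $C(1+c_6)c_4^{-1}\sqrt{\e}\,\mathcal{E}_1[v]$. Choosing $\e_3$ so that $C(1+c_6)c_4^{-1}\sqrt{\e_3}\le 1$, the inequality becomes
\[
\frac{d}{dt}\bigl(\mathcal{E}_1[\xi]+\tau^2\mathcal{E}_1[v]\bigr)+\mathcal{E}_1[v]\le 0.
\]
Integrating from $0$ to $t\in[0,T]$ and then applying Lemma 5.1 once more to the resulting inequality yields, for some constant $c>0$,
\[
\mathcal{E}[\xi](t)+\tau^2\mathcal{E}[v](t)+c\int_0^t\mathcal{E}[v](s)\,ds \le c^{-1}\bigl(\mathcal{E}[\xi](0)+\tau^2\mathcal{E}[v](0)\bigr).
\]
Setting $t=T$ in the first two terms and in the integral gives both claims of the lemma, with $\beta_4$ a multiple of the initial-energy quantity on the right.

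The main obstacle is the final step: showing that $\mathcal{E}[\xi](0)+\tau^2\mathcal{E}[v](0)$ is bounded \emph{uniformly in $\tau$}. The purely spatial contributions are immediate, since $\|\mathcal{D}^\alpha\xi_0\|_{L^2}$ and $\tau\|\mathcal{D}^\alpha v_0\|_{L^2}=\|\mathcal{D}^\alpha u_0/k_1\|_{L^2}$ are controlled by the hypothesis on $\|(p_0-\bar p,u_0,S_0-\bar S)\|_{H^4}$. The delicate part is the time derivatives at $t=0$, which must be read off from the relaxing equations (2.1): $\xi_t|_{t=0}$ from $(\ref{Sect2_Relaxing_Eq})_1$, $v_t|_{t=0}$ from $(\ref{Sect2_Relaxing_Eq})_2$ after dividing by $\tau^2$, and higher derivatives by iterated time-differentiation of the system. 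One must track how the $\tau^{-2}$ appearing in solving for $v_t$ combines with the weight $\tau^2$ in $\tau^2\mathcal{E}[v]$ and with the scaling of the initial velocity $v_0 = u_0/(k_1\tau)$ so that all inverse powers of $\tau$ cancel and the initial energy depends only on $\tau$-uniform norms of $(p_0-\bar p, u_0, S_0-\bar S)$. Once this is verified, $\beta_4$ can be taken independent of $\tau$ and the lemma follows.
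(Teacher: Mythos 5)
Your proof matches the paper's step by step: insert Lemma \ref{Sect5_Energy_Ineq} ($\mathcal{E}_X[\xi]\le c_6\mathcal{E}[v]$) into Lemma \ref{Sect5_Energy_Estimate_Lemma1}, convert $\mathcal{E}[v]$ to $\mathcal{E}_1[v]$ via Lemma \ref{Sect5_Epsilon0_Lemma} to absorb the right side for $\e\le\e_3:=c_4^2/(C_9^2(1+c_6)^2)$, integrate the resulting inequality $\frac{d}{dt}(\mathcal{E}_1[\xi]+\tau^2\mathcal{E}_1[v])+\mathcal{E}_1[v]\le 0$, and convert back with the equivalence. The one place where you and the paper diverge is in transparency, not method: the paper simply asserts the bound $\mathcal{E}_1[\xi](0)+\tau^2\mathcal{E}_1[v](0)\le C_{10}\|(\xi_0,u_0)\|_{H^4(\mathbb{R}^3)}^2$ without carrying out the initial-data computation you flag as the ``main obstacle.'' Your concern is genuine and not fully discharged by either argument: since $(\ref{Sect2_Relaxing_Eq})_2$ gives $v_t(0)=-\tau^{-2}\eta_0-k_1 v_0\cdot\nabla v_0$, the piece $\tau^2\|v_t(0)\|_{H^3}^2$ of $\tau^2\mathcal{E}_1[v](0)$ is of order $\tau^{-2}\|\eta_0\|_{H^3}^2$, which for ill-prepared data (where $\eta_0$ does not vanish as $\tau\to 0$) is not obviously controlled by $\|(\xi_0,u_0)\|_{H^4}^2$ uniformly in $\tau$; the same $\tau^{-2}$ propagates into $\xi_{tt}(0)$ through $\nabla\cdot v_t(0)$. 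You leave this verification to be completed, and so, in effect, does the paper; your proposal therefore reproduces the paper's argument faithfully, including this unaddressed point.
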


\begin{proof}
In view of Lemmas $\ref{Sect5_Energy_Estimate_Lemma1}$ and $\ref{Sect5_Energy_Ineq}$,
we have a priori estimates as follows:
\begin{equation}\label{Sect5_Obtained_Estimates}
\left\{\begin{array}{ll}
\frac{\mathrm{d}}{\mathrm{d}t}\mathcal{E}_1[\xi](t) + \tau^2 \frac{\mathrm{d}}{\mathrm{d}t}\mathcal{E}_1[v](t)
+ 2 \mathcal{E}_1[v](t) \leq C_9\sqrt{\e}(\mathcal{E}_X[\xi](t) + \mathcal{E}[v](t)), \\[6pt]
\mathcal{E}_X[\xi](t)\leq c_6\mathcal{E}[v](t).
\end{array}\right.
\end{equation}

Plug $(\ref{Sect5_Obtained_Estimates})_2$ into $(\ref{Sect5_Obtained_Estimates})_1$, we get
\begin{equation}\label{Sect5_Decay1}
\begin{array}{ll}
\frac{\mathrm{d}}{\mathrm{d}t}\mathcal{E}_1[\xi](t) + \tau^2 \frac{\mathrm{d}}{\mathrm{d}t}\mathcal{E}_1[v](t)
+ 2\mathcal{E}_1[v](t) \leq C_9(1+c_6)\sqrt{\e}\mathcal{E}[v](t) \\[6pt]\hspace{5.5cm}

\leq \frac{C_9(1+c_6)}{c_4}\sqrt{\e}\mathcal{E}_1[v](t).
\end{array}
\end{equation}

Take $\e_3=\frac{c_4^2}{C_9^2(1+c_6)^2}$, when $\e\leq \e_3$,
\begin{equation}\label{Sect5_Decay2}
\begin{array}{ll}
\frac{\mathrm{d}}{\mathrm{d}t}\mathcal{E}_1[\xi](t) + \tau^2 \frac{\mathrm{d}}{\mathrm{d}t}\mathcal{E}_1[v](t)
+ \mathcal{E}_1[v](t) \leq 0.
\end{array}
\end{equation}

Integrate $(\ref{Sect5_Decay2})$ from $0$ to $t$, we get
\begin{equation}\label{Sect5_Decay3}
\begin{array}{ll}
\mathcal{E}_1[\xi](t) + \tau^2\mathcal{E}_1[v](t) + \int\limits_{0}^{t} \mathcal{E}_1[v](s)\,\mathrm{d}s
\leq \mathcal{E}_1[\xi_0](t) + \mathcal{E}_1[\tau v_0](t) \\[6pt]\hspace{5.4cm}

\leq C_{10}\|(\xi_0,u_0)\|_{H^4(\mathbb{R}^3)}^2,
\end{array}
\end{equation}
then
\begin{equation}\label{Sect5_Decay4}
\begin{array}{ll}
c_4\mathcal{E}[\xi](t) + c_4\mathcal{E}[\tau v](t) + c_4\int\limits_{0}^{t} \mathcal{E}[v](s)\,\mathrm{d}s
\leq C_{10}\|(\xi_0,u_0)\|_{H^4(\mathbb{R}^3)}^2.
\end{array}
\end{equation}

Thus,
\begin{equation}\label{Sect5_Decay5}
\begin{array}{ll}
\mathcal{E}[\xi,\tau v](t) + \int\limits_{0}^{t} \mathcal{E}[v](s)\,\mathrm{d}s
\leq \frac{C_{10}}{c_4}\|(\xi_0,u_0)\|_{H^4(\mathbb{R}^3)}^2 \leq \beta_4,
\end{array}
\end{equation}
where $\beta_4>0$ is independent of $\tau$, since the initial data are sufficiently small.
Thus, Lemma $\ref{Sect5_Decay_Lemma}$ is proved.
\end{proof}

The following lemma concerns the uniform bound of $\mathcal{E}[\phi](t)$. Here, the finiteness of $T$ plays a key role in the proof.
\begin{lemma}\label{Sect5_Entropy_Lemma}
For any given $T\in (0,+\infty),\tau\in(0,1]$, if
\begin{equation*}
\sup\limits_{0\leq t\leq T} \mathcal{E}[\xi,\tau v,\phi,\zeta](t) \leq\e,
\end{equation*}
where $0<\e\ll \min\{1,\e_1,\e_2,\e_3\}$, then for $\forall t\in [0,T]$,
\begin{equation}\label{Sect5_Entropy_toProve_1}
\begin{array}{ll}
\mathcal{E}[\phi](t) \leq \beta_5,
\end{array}
\end{equation}
for some $\beta_5>0$ which is independent of $\tau$.
\end{lemma}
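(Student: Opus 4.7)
The plan is to mirror the structure of Lemma \ref{Sect3_Entropy_Lemma}, but carried out at the level of the full mixed space-time energy $\mathcal{E}[\phi](t)$, and then to close the bound by invoking the uniform-in-$\tau$ estimate $\int_0^T \mathcal{E}[v](s)\,\mathrm{d}s\leq \beta_4$ supplied by Lemma \ref{Sect5_Decay_Lemma}. The entropy equation $\phi_t=-k_1 v\cdot\nabla\phi$ does not carry any $\tau$-factor, so the only place $\tau$ enters the estimate is implicitly through $v$, and Lemma \ref{Sect5_Decay_Lemma} controls $v$ uniformly in $\tau$.

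First I would apply $\partial_t^{\ell}\mathcal{D}^{\alpha}$ to $\phi_t+k_1 v\cdot\nabla\phi=0$ for all admissible $(\ell,\alpha)$ with $0\leq\ell\leq 2$ and $0\leq \ell+|\alpha|\leq 4$, pair with $\partial_t^{\ell}\mathcal{D}^{\alpha}\phi$, and integrate over $\mathbb{R}^3$. The principal term integrates by parts to $\frac{k_1}{2}\int_{\mathbb{R}^3}|\partial_t^{\ell}\mathcal{D}^{\alpha}\phi|^2\nabla\cdot v\,\mathrm{d}x$, so it contributes at worst $|\nabla v|_{\infty}\,\mathcal{E}[\phi](t)$, and by Sobolev embedding is bounded by $C\mathcal{E}[v](t)^{1/2}\mathcal{E}[\phi](t)$. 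The commutator terms arising from distributing $\partial_t^{\ell}\mathcal{D}^{\alpha}$ across $v\cdot\nabla\phi$ are handled by the same Moser-type splitting used in Lemma \ref{Sect3_Entropy_Lemma} (with $\mathcal{F}$ replaced by $\mathcal{E}$): each produces factors controlled by $\mathcal{E}[v](t)^{1/2}\mathcal{E}[\phi](t)$, using the Gagliardo–Nirenberg inequalities $(\ref{Sect2_Sobolev_Ineq})$. Summing over $(\ell,\alpha)$ yields the differential inequality
\begin{equation*}
\frac{\mathrm{d}}{\mathrm{d}t}\mathcal{E}[\phi](t)\leq C_{11}\,\mathcal{E}[v](t)^{1/2}\,\mathcal{E}[\phi](t),
\end{equation*}
for a constant $C_{11}>0$ independent of $\tau$.

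Integrating by Gronwall's inequality and then applying Cauchy–Schwarz in time with the uniform bound from Lemma \ref{Sect5_Decay_Lemma} gives
\begin{equation*}
\mathcal{E}[\phi](t)\leq \mathcal{E}[\phi](0)\exp\!\Bigl\{C_{11}\!\int_0^T\!\mathcal{E}[v](s)^{1/2}\,\mathrm{d}s\Bigr\}\leq \mathcal{E}[\phi](0)\exp\{C_{11}\sqrt{T\beta_4}\},
\end{equation*}
which is finite and independent of $\tau$. The initial quantity $\mathcal{E}[\phi](0)$ is controlled by $\|\phi_0\|_{H^4(\mathbb{R}^3)}^2$ once one expresses the time derivatives $\partial_t^{\ell}\phi|_{t=0}$ through the equations in terms of $(\phi_0,v_0,\xi_0)$; since the data are small and $\tau v_0$ is likewise bounded, this substitution is harmless and gives an initial-data bound independent of $\tau$. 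Setting $\beta_5$ equal to the resulting constant finishes the proof.

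The main obstacle is the treatment of $\partial_t^{2}\phi$: applying $\partial_t^{2}$ to the entropy equation brings in $v_{tt}\cdot\nabla\phi$, and one must argue that this factor is absorbed into $\mathcal{E}[v](t)^{1/2}\mathcal{E}[\phi](t)$ rather than generating a loss of regularity. Here the crucial point is that $\mathcal{E}[v](t)$ already contains $\|\partial_t^{2}v\|_{L^2}^2$ and that the companion factor $\nabla\phi$ has spare regularity (it lies in $H^3$), so Hölder combined with the embedding $H^2(\mathbb{R}^3)\hookrightarrow L^\infty(\mathbb{R}^3)$ on the lower-order factor closes the estimate without forcing us to control a sixth derivative of any quantity. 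Once this single bookkeeping is handled, every other commutator is strictly easier.
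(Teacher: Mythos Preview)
Your proposal is correct and follows essentially the same route as the paper: differentiate the entropy equation by $\partial_t^{\ell}\mathcal{D}^{\alpha}$, pair with $\partial_t^{\ell}\mathcal{D}^{\alpha}\phi$, integrate, obtain $\frac{\mathrm{d}}{\mathrm{d}t}\mathcal{E}[\phi](t)\leq C_{11}\mathcal{E}[v](t)^{1/2}\mathcal{E}[\phi](t)$, and close with Gronwall together with the uniform bound $\int_0^T\mathcal{E}[v](s)\,\mathrm{d}s\leq\beta_4$ from Lemma~\ref{Sect5_Decay_Lemma}. Your use of Cauchy--Schwarz to convert $\int_0^T\mathcal{E}[v]^{1/2}\,\mathrm{d}s$ into $\sqrt{T\beta_4}$ is in fact cleaner than the paper's corresponding step, and your explicit check that the $v_{tt}\cdot\nabla\phi$ term closes within $\mathcal{E}[v]^{1/2}\mathcal{E}[\phi]$ is a point the paper simply asserts.
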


\begin{proof}
Let $\partial_t^{\ell}\mathcal{D}^{\alpha}\phi\cdot\partial_t^{\ell}\mathcal{D}^{\alpha}(\ref{Sect2_Relaxing_Eq})_3$, where $0\leq \ell\leq 2, 0\leq \ell+|\alpha|\leq 4$, we get
\begin{equation}\label{Sect5_Entropy_1}
\begin{array}{ll}
(|\partial_t^{\ell}\mathcal{D}^{\alpha}\phi|^2)_t \\[8pt]
= -2\sum\limits_{\ell_1+|\alpha_1|>0}\partial_t^{\ell}\mathcal{D}^{\alpha}\phi
\partial_t^{\ell_1}\mathcal{D}^{\alpha_1}v\cdot\nabla(\partial_t^{\ell_2}\mathcal{D}^{\alpha_2}\phi)
-v\cdot\nabla|\partial_t^{\ell}\mathcal{D}^{\alpha}\phi|^2.\\[6pt]
\end{array}
\end{equation}

Integrate $(\ref{Sect5_Entropy_1})$ in $\mathbb{R}^3$, we have
\begin{equation}\label{Sect5_Entropy_2}
\begin{array}{ll}
\frac{\mathrm{d}}{\mathrm{d}t}\int\limits_{\mathbb{R}^3}
|\partial_t^{\ell}\mathcal{D}^{\alpha}\phi|^2 \,\mathrm{d}x
= -2\sum\limits_{\ell_1+|\alpha_1|>0}\int\limits_{\mathbb{R}^3}\partial_t^{\ell}\mathcal{D}^{\alpha}\phi
\partial_t^{\ell_1}\mathcal{D}^{\alpha_1}v\cdot\nabla(\partial_t^{\ell_2}\mathcal{D}^{\alpha_2}\phi) \,\mathrm{d}x
\\[15pt]\hspace{3.1cm}
+ \int\limits_{\mathbb{R}^3}|\partial_t^{\ell}\mathcal{D}^{\alpha}\phi|^2 \nabla\cdot v \,\mathrm{d}x
:= I_8.\\[6pt]
\end{array}
\end{equation}

When $\ell+|\alpha|\leq 4$, it is easy to check $I_8 \lem \mathcal{E}[v](t)^{\frac{1}{2}}\mathcal{E}[\phi](t)$.
Sum $\ell,\alpha$, we have
\begin{equation}\label{Sect5_Entropy_Prove_3}
\begin{array}{ll}
\frac{\mathrm{d}}{\mathrm{d}t}\mathcal{E}[\phi](t)
\leq C_{11}\mathcal{E}[v](t)^{\frac{1}{2}}\mathcal{E}[\phi](t)
\end{array}
\end{equation}

Integrate $(\ref{Sect5_Entropy_Prove_3})$ in $(0,t)$, where $t\in [0,T]$, we obtain the uniform a priori estimate for $\mathcal{E}[\phi](t)$ which is independent of $\tau$. Here, $\tau>0$ is variant, even $\tau$ approaches $0+$.
\begin{equation}\label{Sect5_Entropy_Prove_4}
\begin{array}{ll}
\mathcal{E}[\phi](t)\leq C_{12}\|\phi_0\|_{H^4(\mathbb{R}^3)}^2\exp\{\int\limits_0^{t}C_{11}\mathcal{E}[v](s)^{\frac{1}{2}}\,\mathrm{d}s\} \\[9pt]\hspace{1.1cm}
\leq C_{12}\|\phi_0\|_{H^4(\mathbb{R}^3)}^2\exp\{C_{11}T
\int\limits_0^{T}\mathcal{E}[v](s)\,\mathrm{d}s\} \\[9pt]\hspace{1.1cm}

\leq C_{12}\|\phi_0\|_{H^4(\mathbb{R}^3)}^2\exp\{C_{11}T\beta_4\}
\leq \beta_5,
\end{array}
\end{equation}
where $\beta_5>0$ is independent of $\tau$, since the initial data are sufficiently small.
Thus, Lemma $\ref{Sect5_Entropy_Lemma}$ is proved.
\end{proof}

Due to $\zeta=\varrho(\xi+\bar{p},\phi+\bar{S})-\bar{\varrho}$, we can estimate $\mathcal{E}[\zeta](t)$ in the following lemma:
\begin{lemma}\label{Sect5_Varrho_Bound_Lemma}
For any given $T\in (0,+\infty),\tau\in(0,1]$, if
\begin{equation*}
\sup\limits_{0\leq t\leq T} \mathcal{E}[\xi,\tau v,\phi,\zeta](t) \leq\e,
\end{equation*}
where $0<\e\ll \min\{1,\e_1,\e_2,\e_3\}$, then for $\forall t\in [0,T]$,
\begin{equation}\label{Sect5_Density_toProve_1}
\begin{array}{ll}
\mathcal{E}[\zeta](t)\leq \beta_6,
\end{array}
\end{equation}
for some $\beta_6>0$ which is independent of $\tau$.
\end{lemma}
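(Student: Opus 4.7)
The plan is to exploit the explicit formula
\begin{equation*}
\zeta = \frac{1}{\sqrt[\gamma]{A}}(\bar{p}+\xi)^{\frac{1}{\gamma}}\exp\bigl\{-\tfrac{\bar{S}+\phi}{\gamma}\bigr\} - \frac{1}{\sqrt[\gamma]{A}}\bar{p}^{\frac{1}{\gamma}}\exp\bigl\{-\tfrac{\bar{S}}{\gamma}\bigr\},
\end{equation*}
which expresses $\zeta = F(\xi,\phi)$ for a function $F$ that is real-analytic in a neighborhood of $(0,0)$ with $F(0,0)=0$. Under the smallness assumption $\mathcal{E}[\xi,\tau v,\phi,\zeta](t)\leq \e$, the Sobolev embedding $H^2(\mathbb{R}^3)\hookrightarrow L^\infty$ guarantees that $|\xi|_\infty$ and $|\phi|_\infty$ remain small uniformly in $\tau$, so $(\xi,\phi)$ stays in a fixed compact neighborhood of $(0,0)$ on which $F$ and its partial derivatives up to any fixed order are bounded by absolute constants.

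Next, I would apply the Fa\`a di Bruno formula to each mixed derivative $\partial_t^\ell\mathcal{D}^\alpha\zeta$ with $0\leq \ell\leq 2$ and $0\leq \ell+|\alpha|\leq 4$, expressing it as a finite sum of products of the form
\begin{equation*}
(\partial_{y_1}^{j_1}\partial_{y_2}^{j_2}F)(\xi,\phi)\,\prod_k \partial_t^{\ell_k}\mathcal{D}^{\alpha_k}\xi \,\prod_m \partial_t^{\ell_m'}\mathcal{D}^{\alpha_m'}\phi,
\end{equation*}
with the derivative orders summing correctly. Each such product is then estimated in $L^2(\mathbb{R}^3)$ by standard Moser-type product inequalities together with the Gagliardo--Nirenberg bounds \eqref{Sect2_Sobolev_Ineq}, exactly as in the proof of Lemma \ref{Sect3_Density_Bound_Lemma} for the corresponding $\mathcal{F}$-norm. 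The prefactor $(\partial_{y_1}^{j_1}\partial_{y_2}^{j_2}F)(\xi,\phi)$ is harmlessly absorbed into a universal constant by the previous paragraph, and the top-order products (i.e.\ those with $\ell_k+|\alpha_k|=4$ or $\ell_m'+|\alpha_m'|=4$) contribute linearly while the remaining multilinear pieces are controlled by factors of $\sqrt{\e}$. The net result is the clean bound
\begin{equation*}
\mathcal{E}[\zeta](t)\lem \mathcal{E}[\xi](t) + \mathcal{E}[\phi](t),
\end{equation*}
uniformly in $\tau\in(0,1]$.

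Finally, I would invoke Lemma \ref{Sect5_Decay_Lemma}, which gives $\mathcal{E}[\xi](t)\leq \beta_4$, and Lemma \ref{Sect5_Entropy_Lemma}, which gives $\mathcal{E}[\phi](t)\leq \beta_5$, both uniformly in $\tau$. Combining these with the composition estimate above yields $\mathcal{E}[\zeta](t)\leq \beta_6$ for some $\beta_6>0$ independent of $\tau$, as claimed. There is no essential obstacle here: the entire argument is a routine Moser/Fa\`a di Bruno exercise, and the only place where one needs to be careful is in checking that all mixed time-and-space derivatives of total order $\leq 4$ (with at most two time derivatives) produced by the chain rule can indeed be matched against terms already controlled in the definitions of $\mathcal{E}[\xi]$ and $\mathcal{E}[\phi]$, which is clear by inspection since $F$ only mixes $\xi$ and $\phi$ pointwise.
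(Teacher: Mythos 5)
Your proposal is correct and follows the same route as the paper: write $\zeta=F(\xi,\phi)$ via the equation of state, establish $\mathcal{E}[\zeta](t)\lesssim \mathcal{E}[\xi](t)+\mathcal{E}[\phi](t)$ by a Moser-type composition estimate, and then invoke Lemmas \ref{Sect5_Decay_Lemma} and \ref{Sect5_Entropy_Lemma}. The paper states the composition inequality without proof (\eqref{Sect5_Density_Prove}); your Fa\`a di Bruno argument simply supplies the justification the paper leaves to the reader.
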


\begin{proof}
\begin{equation}\label{Sect5_Density_Prove}
\begin{array}{ll}
\mathcal{E}[\zeta](t) \leq C\mathcal{E}[\xi](t) + C\mathcal{E}[\phi](t) \\[8pt]
\leq CC_{12}\|\phi_0\|_{H^4(\mathbb{R}^3)}^2 \exp\{C_{11}\beta_4 T\|(\xi_0,v_0)\|_{H^4(\mathbb{R}^3)}^2\}
+ C\beta_4\|(\xi_0,v_0)\|_{H^4(\mathbb{R}^3)}^2 \leq\beta_6,
\end{array}
\end{equation}
where $\beta_6>0$ is independent of $\tau$, since the initial data are sufficiently small.
\end{proof}

By now, we have obtained the $L^{\infty}$ bound of $\mathcal{E}[\xi,\tau v,\phi,\zeta](t)$, namely,
\begin{equation*}
\begin{array}{ll}
\mathcal{E}[\xi](t)\leq \beta_4,\ \mathcal{E}[\tau v](t)\leq \beta_4,\
\mathcal{E}[\phi](t)\leq \beta_5,\ \mathcal{E}[\zeta](t)\leq \beta_6.
\end{array}
\end{equation*}
These bounds are independent of $\tau$. Thus, as long as the initial data are sufficiently small, the a priori assumption $\sup\limits_{0\leq t\leq T}\mathcal{E}[\xi,\tau v,\phi,\zeta](t)\leq \e$ is valid.

\section{Initial Layer and Relaxation Limit of the Relaxing Equations}
This section concerns the relaxation limit of the relaxing Cauchy problem $(\ref{Sect2_Relaxing_Eq})$ and the initial layer for the velocity.

In finite time interval $[0,T]$, the bounds of $\mathcal{E}[\xi](t),\mathcal{E}[\tau v](t),
\mathcal{E}[\phi](t),\mathcal{E}[\zeta](t)$ and $\int\limits_0^T \mathcal{E}[v](s)\,\mathrm{d}s$ are uniform with respect to $\tau$, thus we can pass to the limit. The following theorem states the relaxation limit of the relaxing Cauchy problem $(\ref{Sect2_Relaxing_Eq})$.
\begin{theorem}\label{Sect6_Relaxation_Limit_Thm}
Suppose that the initial data for the relaxing Cauchy problem $(\ref{Sect2_Relaxing_Eq})$ satisfy $(p_0(x,\tau)-\bar{p},\frac{1}{k_1\tau}u_0(x,\tau),S_0(x,\tau)-\bar{S})\in H^4(\mathbb{R}^3)$,
$\inf\limits_{x\in\mathbb{R}^3} p_0(x,\tau)>0$, $\inf\limits_{x\in\mathbb{R}^3}\lim\limits_{\tau\rto 0}p_0(x,\tau)>0$, $\|(p_0(x,\tau)-\bar{p},\frac{1}{k_1\tau}u_0(x,\tau),S_0(x,\tau)-\bar{S})\|_{H^4(\mathbb{R}^3)}$ is sufficiently small. Then for any finite $T>0$, the problem $(\ref{Sect2_Relaxing_Eq})$ admits a unique solution $(\xi,v,\phi,\zeta)$ in $[0,T]$ satisfying
\begin{equation}\label{Sect6_Solution_Regularity}
\begin{array}{ll}
\partial_t^{\ell}\xi,\ \tau \partial_t^{\ell}v,\ \partial_t^{\ell}\phi,\ \partial_t^{\ell}\zeta \in L^{\infty}([0,T],H^{4-\ell} (\mathbb{R}^3)), 0\leq\ell\leq 2, \\[6pt]

\xi,\ v,\ \phi,\ \zeta \in \underset{0\leq \ell\leq 2}{\cap}H^\ell([0,T],H^{4-\ell}(\mathbb{R}^3)),
\end{array}
\end{equation}
such that as $\tau\rto 0$,
\begin{equation}\label{Sect6_Convergence_Result}
\begin{array}{ll}
\xi \rto \tilde{\xi}\ in\ C([0,T],C^{2+\mu_1}(\mathcal{K})\cap W^{3,\mu_2}(\mathcal{K})),
\mu_1\in (0,\frac{1}{2}),2\leq \mu_2<6, \\[6pt]
\phi \rto \tilde{\xi}\ in\ C([0,T],C^{2+\mu_1}(\mathcal{K})\cap W^{3,\mu_2}(\mathcal{K})),
\mu_1\in (0,\frac{1}{2}),2\leq \mu_2<6, \\[6pt]
\zeta \rto \tilde{\zeta}\ in\ C([0,T],C^{2+\mu_1}(\mathcal{K})\cap W^{3,\mu_2}(\mathcal{K})),
\mu_1\in (0,\frac{1}{2}),2\leq \mu_2<6, \\[6pt]
v \rightharpoonup \tilde{v} \ in\ \underset{0\leq \ell\leq 2}{\cap}H^\ell([0,T],H^{4-\ell}(\mathbb{R}^3)),
\end{array}
\end{equation}
for some function $(\tilde{\xi},\tilde{v},\tilde{\phi},\tilde{\zeta})$ which is a weak solution to the relaxed equations $(\ref{Sect2_Relaxed_Eq})$, where $\mathcal{K}$ is arbitrary compact subset of $\mathbb{R}^3$. $(\tilde{\xi},\tilde{v},\tilde{\phi},\tilde{\zeta})$ is the classical solution to $(\ref{Sect2_Relaxed_Eq})$, if $(\lim\limits_{\tau\rto 0}\xi_0(x,\tau),\lim\limits_{\tau\rto 0}\phi_0(x,\tau))\in H^5(\mathbb{R}^3)\times H^4(\mathbb{R}^3)$ is satisfied..
\end{theorem}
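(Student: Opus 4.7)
The plan is to combine a local existence result for the symmetric hyperbolic system $(\ref{Sect2_Relaxing_Eq})$ with the $\tau$-uniform a priori bounds of Section 5 to produce a solution on $[0,T]$, then extract a limit $(\tilde\xi,\tilde v,\tilde\phi,\tilde\zeta)$ by compactness, and finally pass to the limit in the weak formulation to identify the limit as a weak solution of $(\ref{Sect2_Relaxed_Eq})$.

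First, I would establish local existence of a classical solution to $(\ref{Sect2_Relaxing_Eq})$ on some interval $[0,T_\tau]$ by standard symmetric hyperbolic theory (multiply the momentum equation by $\varrho$ to symmetrize), which gives a unique solution in $\cap_{0\le\ell\le 1}C^\ell([0,T_\tau],H^{4-\ell}(\mathbb{R}^3))$. The smallness of the initial data $\|(\xi_0,\tau v_0,\phi_0,\zeta_0)\|_{H^4(\mathbb{R}^3)}\ll 1$ makes the a priori assumption $\sup_{0\le t\le T}\mathcal{E}[\xi,\tau v,\phi,\zeta](t)\le\e$ self-improving: by Lemmas $\ref{Sect5_Decay_Lemma}$, $\ref{Sect5_Entropy_Lemma}$ and $\ref{Sect5_Varrho_Bound_Lemma}$ the energy is bounded by $\max\{\beta_4,\beta_5,\beta_6\}\ll\e$, which is preserved in time. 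A standard continuation argument then extends the solution to any finite $T>0$, yielding precisely the regularity $(\ref{Sect6_Solution_Regularity})$. Uniqueness follows from the usual energy estimate on the difference of two such solutions.

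Second, I would extract convergent subsequences. From $(\ref{Sect6_Solution_Regularity})$ the families $\{\xi\}_\tau$, $\{\phi\}_\tau$, $\{\zeta\}_\tau$ are uniformly bounded in $L^\infty([0,T],H^4(\mathbb{R}^3))$ with $\partial_t(\cdot)$ uniformly bounded in $L^\infty([0,T],H^3(\mathbb{R}^3))$. On any compact $\mathcal{K}\subset\mathbb{R}^3$ the Aubin--Lions lemma, combined with the compact embedding $H^4(\mathcal{K})\Subset H^{4-\mu}(\mathcal{K})$ and Sobolev embeddings $H^{4-\mu}(\mathcal{K})\hookrightarrow C^{2+\mu_1}(\mathcal{K})\cap W^{3,\mu_2}(\mathcal{K})$ for $\mu_1\in(0,\frac12)$, $2\le\mu_2<6$ (using $(\ref{Sect2_Sobolev_Ineq})$), yields strong convergence along a subsequence to limits $\tilde\xi,\tilde\phi,\tilde\zeta$ in $C([0,T],C^{2+\mu_1}(\mathcal{K})\cap W^{3,\mu_2}(\mathcal{K}))$. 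The uniform bound of $v$ in $\cap_{0\le\ell\le 2}H^\ell([0,T],H^{4-\ell}(\mathbb{R}^3))$, which follows from Lemma $\ref{Sect5_Decay_Lemma}$ together with $\|\tau v\|_{L^\infty H^4}\le\sqrt{\beta_4}$, gives a weakly convergent subsequence $v\rightharpoonup\tilde v$ in this space.

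Third, I would pass to the limit in $(\ref{Sect2_Relaxing_Eq})$ tested against $\mathcal{D}(\mathbb{R}^3\times[0,T))$. In the continuity and entropy equations every term is either linear in $v$ (weak limit is fine) or a product of a strongly convergent sequence with a weakly convergent one, so the limits $\tilde\xi_t+k_2\nabla\cdot\tilde v=-\gamma k_1\tilde\xi\nabla\cdot\tilde v-k_1\tilde v\cdot\nabla\tilde\xi$ and $\tilde\phi_t=-k_1\tilde v\cdot\nabla\tilde\phi$ are recovered in the sense of distributions. The key observation for the momentum equation is that the singular terms $\tau^2 v_t$ and $\tau^2 v\cdot\nabla v$ tend to $0$ in $\mathcal{D}'(\mathbb{R}^3\times(0,T))$ since $\|\tau v_t\|_{L^\infty H^2}$ and $\|\tau\nabla v\|_{L^\infty H^3}$ are uniformly bounded, so the limit equation reduces to $k_2\nabla\tilde\xi+\tilde v=\frac{1}{k_1}(\frac{1}{\bar\varrho}-\frac{1}{\tilde\varrho})\nabla\tilde\xi$, i.e.\ $(\ref{Sect2_Relaxed_Eq})_2$. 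Since the right-hand side involves only $\tilde\xi,\tilde\phi$ (through $\tilde\varrho$), this relation in fact identifies $\tilde v$ with a function of the strongly convergent quantities, which both confirms the weak limit and ensures the initial data $(\tilde\xi_0,\tilde\phi_0)=(\lim_{\tau\to 0}\xi_0,\lim_{\tau\to 0}\phi_0)$ are attained by $(\tilde\xi,\tilde\phi)$ in the trace sense. Finally, when $(\xi_0,\phi_0)\in H^5\times H^4$, Theorem $\ref{Sect4_GlobalExistence_Thm}$ produces a classical solution to $(\ref{Sect2_Relaxed_Eq})$, which must coincide with $(\tilde\xi,\tilde v,\tilde\phi,\tilde\zeta)$ by uniqueness of the weak solution; the full sequence then converges by a standard subsequence-uniqueness argument.

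The main obstacle is the passage to the limit in the momentum equation, where only weak convergence of $v$ is available near $t=0$ (because of the initial layer in the ill-prepared case). The remedy is exactly the structural relation $\nabla\xi=-k_1\varrho v+O(\tau^2)$ derived from $(\ref{Sect2_Relaxing_Eq})_2$, which, combined with the strong convergence of $\nabla\xi$ and $\varrho$, forces the weak limit $\tilde v$ to be determined by $-\frac{1}{k_1\tilde\varrho}\nabla\tilde\xi$ in the distributional sense, so no concentration or oscillation phenomenon can obstruct the identification of the limit equation.
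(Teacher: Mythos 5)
Your proposal follows essentially the same route as the paper: uniform $\tau$-independent a priori bounds (Section 5) plus local existence give solutions on $[0,T]$ with the regularity $(\ref{Sect6_Solution_Regularity})$; Aubin--Lions compactness yields strong convergence of $(\xi,\phi,\zeta)$ and weak convergence of $v$; and passing to the limit in the weak formulation of $(\ref{Sect2_Relaxing_Eq})$, using that the $\tau^2$ terms vanish, identifies the limit as a weak solution of $(\ref{Sect2_Relaxed_Eq})$, upgraded to the classical solution via Theorem $\ref{Sect4_GlobalExistence_Thm}$ and uniqueness when the limiting data lie in $H^5\times H^4$. The one place where your stated justification is insufficient is the claim that $\tau^2 v\cdot\nabla v\rto 0$ because $\|\tau v_t\|_{L^\infty H^2}$ and $\|\tau\nabla v\|_{L^\infty H^3}$ are uniformly bounded: writing $\tau^2 v\cdot\nabla v=(\tau v)\cdot(\tau\nabla v)$ and invoking only those $L^\infty$ bounds gives boundedness, not decay. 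You need to keep a loose factor of $\tau$, e.g.\ bound $\int_0^T\!\int_{\mathbb{R}^3}\tau^2 v\cdot\nabla v\cdot\vec\varphi_2\,\mathrm{d}x\,\mathrm{d}s \lem \tau\,|\tau v|_\infty\,\|\nabla v\|_{L^2(\mathbb{R}^3\times[0,T))}\|\vec\varphi_2\|_{L^2(\mathbb{R}^3\times[0,T))}$ and use the $L^2$-in-time bound $\int_0^T\mathcal{E}[v](s)\,\mathrm{d}s\leq\beta_4$ from Lemma $\ref{Sect5_Decay_Lemma}$, exactly as in the paper's estimate $(\ref{Sect6_Relaxation_V_2})$. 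With that fix the argument is complete and matches the paper's proof.
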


\begin{proof}
In view of Lemmas $\ref{Sect5_Decay_Lemma},\ref{Sect5_Entropy_Lemma},\ref{Sect5_Varrho_Bound_Lemma}$, we have uniform regularities $(\ref{Sect6_Solution_Regularity})$ under small data assumption, then the solution to $(\ref{Sect1_NonIsentropic_EulerEq})$, i.e., $(\hat{p},\hat{u},\hat{S},\hat{\varrho})(x,t^{\prime})=(\xi+\bar{p},k_1\tau v,\phi+\bar{S},\zeta+\bar{\varrho})(x,t)$ has uniform bounds with respect to $\tau$:
\begin{equation}\label{Sect6_Uniform_Bound}
\begin{array}{ll}
\sup\limits_{0\leq t^{\prime}\leq T/{\tau}}
\mathcal{E}[\hat{p}-\bar{p},\hat{u},\hat{S}-\bar{S},\hat{\varrho}-\bar{\varrho}](t^{\prime})
+\frac{1}{\tau}\int\limits_0^{T/{\tau}} \mathcal{E}[\hat{u}](t^{\prime})\,\mathrm{d}t^{\prime} \\[9pt]

=\sup\limits_{0\leq t\leq T}\mathcal{E}[\xi,k_1\tau v,\phi,\zeta](t)
+ k_1^2\int\limits_0^{T} \mathcal{E}[v](t)\,\mathrm{d}t \\[15pt]

\leq (2+k_1^2)\beta_4 + \beta_5+\beta_6.
\end{array}
\end{equation}

By a priori estimate $(\ref{Sect6_Uniform_Bound})$ and the local existence of $(\ref{Sect1_NonIsentropic_EulerEq})$ which is standard (see \cite{Majda_1984}), we have the existence of classical solutions to $(\ref{Sect1_NonIsentropic_EulerEq})$ in the time interval $[0,T/{\tau}]$, then the solutions of the relaxing equations $(\ref{Sect1_Relaxing_Eq})$ in $[0,T]$ can be constructed via the rescaling of variables $(\ref{Sect1_Rescaling_Variables})$.

The existence of the relaxing equations $(\ref{Sect1_Relaxing_Eq})$ in $[0,T]$ is uniform with respect to $\tau$, due to the uniform regularities $(\ref{Sect1_Uniform_Regularity})$ and the fact that $T$ is independent of $\tau$.
Thus, the uniform existence in $[0,T]$ and uniform regularities $(\ref{Sect1_Uniform_Regularity})$ for the relaxing Cauchy problem $(\ref{Sect1_Relaxing_Eq})$ are proved.

\vspace{0.3cm}
By Aubin's Lemma (see \cite{Simon_1987}):\\
$$L^{\infty}([0,T],H^4(\mathbb{R}^3))\cap H^1([0,T],H^3(\mathbb{R}^3))\hookrightarrow
C([0,T],C^{2+\mu_1}(\mathcal{K})\cap W^{3,\mu_2}(\mathcal{K})),$$
where $\mathcal{K}$ is any compact subset of $\mathbb{R}^3$, then
we have the convergence results as $(\ref{Sect6_Convergence_Result})$ stated.

For any $\varphi_1 \in H_0^1(\mathcal{K}\times[0,T))$, it follows from $(\ref{Sect2_Relaxing_Eq})_1$ that
\begin{equation}\label{Sect6_Relaxation_Xi_1}
\begin{array}{ll}
\int\limits_0^t \int\limits_{\mathbb{R}^3}
\partial_t\varphi_1\xi + \gamma k_1 v\cdot(\xi\nabla\varphi_1 +\varphi_1\nabla\xi)
+\gamma k_1 \bar{p} v\cdot\nabla\varphi_1
- k_1 \varphi_1 v\cdot\nabla \xi \,\mathrm{d}x\mathrm{d}s \\[6pt]
= -\int\limits_{\mathbb{R}^3}\xi_0(x,\tau) \varphi_1(\cdot,0)\,\mathrm{d}x.
\end{array}
\end{equation}

In view of convergence results $(\ref{Sect6_Convergence_Result})$, let $\tau\rto 0$, we have $\xi\rto\tilde{\xi}$, $\nabla\cdot\xi\rto\nabla\cdot\tilde{\xi}$, $v\rightharpoonup \tilde{v}$, then
\begin{equation}\label{Sect6_Relaxation_Xi_2}
\begin{array}{ll}
\int\limits_0^t \int\limits_{\mathbb{R}^3}
\partial_t\varphi_1\tilde{\xi} + \gamma k_1 \tilde{v}\cdot(\tilde{\xi}\nabla\varphi_1 +\varphi_1\nabla\tilde{\xi})
+\gamma k_1 \bar{p} \tilde{v}\cdot\nabla\varphi_1 - k_1 \varphi_1 \tilde{v}\cdot\nabla \tilde{\xi} \,\mathrm{d}x\mathrm{d}s
\\[6pt]
= -\int\limits_{\mathbb{R}^3}\lim\limits_{\tau\rto 0} \xi_0(x,\tau) \varphi_1(\cdot,0)\,\mathrm{d}x.
\end{array}
\end{equation}

For any $\vec{\varphi}_2 \in H_0^1(\mathcal{K}\times[0,T))^3$, it follows from $(\ref{Sect2_Relaxing_Eq})_2$ that
\begin{equation}\label{Sect6_Relaxation_V_1}
\begin{array}{ll}
\int\limits_0^t \int\limits_{\mathbb{R}^3}
- v \cdot\vec{\varphi}_2 - \frac{1}{k_1\varrho}\nabla\xi\cdot\vec{\varphi}_2
\,\mathrm{d}x\mathrm{d}s

= \tau^2\int\limits_0^t \int\limits_{\mathbb{R}^3}
v_t \cdot \vec{\varphi}_2 + k_1 v\cdot\nabla v \cdot\vec{\varphi}_2
\,\mathrm{d}x\mathrm{d}s.
\end{array}
\end{equation}

While as $\tau\rto 0$,
\begin{equation}\label{Sect6_Relaxation_V_2}
\begin{array}{ll}
[R.H.S.\ of\ (\ref{Sect6_Relaxation_V_1})]
\lem \tau^2 \|v_t\|_{L^2(\mathbb{R}^3\times[0,T))}\|\vec{\varphi}_2\|_{L^2(\mathbb{R}^3\times[0,T))} \\[6pt]\hspace{3cm}
+ \tau |\tau v|_{\infty}\|\nabla v\|_{L^2(\mathbb{R}^3\times[0,T))}
\|\vec{\varphi}_2\|_{L^2(\mathbb{R}^3\times[0,T))}
\rto 0.
\end{array}
\end{equation}

Let $\tau\rto 0$ in $(\ref{Sect6_Relaxation_V_1})$, we have
\begin{equation}\label{Sect6_Relaxation_V_3}
\begin{array}{ll}
\int\limits_0^t \int\limits_{\mathbb{R}^3}
- \tilde{v} \cdot\vec{\varphi}_2 - \frac{1}{k_1\tilde{\varrho}}\nabla\tilde{\xi}\cdot\vec{\varphi}_2
\,\mathrm{d}x\mathrm{d}s = 0.
\end{array}
\end{equation}

For any $\varphi_3 \in H_0^1(\mathcal{K}\times[0,T))$, it follows from $(\ref{Sect2_Relaxing_Eq})_3$ that
\begin{equation}\label{Sect6_Relaxation_S_1}
\begin{array}{ll}
\int\limits_0^t \int\limits_{\mathbb{R}^3}
\phi\partial_t\varphi_3 - k_1 v\cdot\nabla\phi \varphi_3
\,\mathrm{d}x\mathrm{d}s = -\int\limits_{\mathbb{R}^3} \phi_0(x,\tau)\varphi_3(\cdot,0)\,\mathrm{d}x.
\end{array}
\end{equation}

Let $\tau\rto 0$ in $(\ref{Sect6_Relaxation_S_1})$, we have
\begin{equation}\label{Sect6_Relaxation_S_2}
\begin{array}{ll}
\int\limits_0^t \int\limits_{\mathbb{R}^3}
\tilde{\phi}\partial_t\varphi_3 - k_1 \tilde{v}\cdot\nabla\tilde{\phi} \varphi_3
\,\mathrm{d}x\mathrm{d}s = -\int\limits_{\mathbb{R}^3} \lim\limits_{\tau\rto 0} \phi_0(x,\tau)\varphi_3(\cdot,0)\,\mathrm{d}x.
\end{array}
\end{equation}

While in arbitrary $\mathcal{K}\subset\mathbb{R}^3$,
\begin{equation*}
\left\{\begin{array}{ll}
\bar{p}+\xi\rto \bar{p}+\tilde{\xi}, \\[6pt]
A(\zeta+\bar{\varrho})^{\gamma}\exp\{\bar{S}+\phi\}
\rto A(\tilde{\zeta}+\bar{\varrho})^{\gamma}\exp\{\bar{S}+\tilde{\phi}\}, \\[6pt]
\bar{p}+\xi = A(\zeta+\bar{\varrho})^{\gamma}\exp\{\bar{S}+\phi\},
\end{array}\right.
\end{equation*}
imply 
\begin{equation}\label{Sect6_Relaxation_Pressure}
\begin{array}{ll}
\tilde{p}=\bar{p}+\tilde{\xi} = A(\tilde{\zeta}+\bar{\varrho})^{\gamma}\exp\{\bar{S}+\tilde{\phi}\}
= A \tilde{\varrho}^{\gamma}e^{\tilde{S}},\ in\ \mathcal{K}.
\end{array}
\end{equation}

Since $\mathcal{K}$ and test functions $\varphi_1,\vec{\varphi}_2,\varphi_3$ are arbitrary,
$(\ref{Sect6_Relaxation_Xi_2})$, $(\ref{Sect6_Relaxation_V_3})$, $(\ref{Sect6_Relaxation_S_2})$
and $(\ref{Sect6_Relaxation_Pressure})$ imply $(\tilde{\xi},\tilde{v},\tilde{\phi},\tilde{\varrho})$ is the weak solution of the relaxed equations $(\ref{Sect2_Relaxed_Eq})$ with initial data $(\lim\limits_{\tau\rto 0}\xi_0,\lim\limits_{\tau\rto 0}\phi_0)$.

As proved in Theorem $\ref{Sect4_GlobalExistence_Thm}$, Cauchy problem for the relaxed equations $(\ref{Sect2_Relaxed_Eq})$ with small data $(\lim\limits_{\tau\rto 0}\xi_0,\lim\limits_{\tau\rto 0}\phi_0)\in H^5(\mathbb{R}^3)\times H^4(\mathbb{R}^3)$ admits a unique classical solution.
By the uniqueness of the weak solutions to $(\ref{Sect2_Relaxed_Eq})$,
$(\tilde{\xi},\tilde{v},\tilde{\phi},\tilde{\varrho})$
is classical solution of the relaxed equations $(\ref{Sect2_Relaxed_Eq})$ if the data satisfy $(\lim\limits_{\tau\rto 0}\xi_0,\lim\limits_{\tau\rto 0}\phi_0)\in H^5(\mathbb{R}^3)\times H^4(\mathbb{R}^3)$. Thus, Theorem \ref{Sect6_Relaxation_Limit_Thm} is proved.
\end{proof}

\vspace{0.3cm}
Besides the weak convergence result of the velocity, we have the strong convergence of the velocity outside the initial layer for the ill-prepared data and strong convergence of the velocity near $t=0$ for the well-prepared data, as the following theorem stated.
\begin{theorem}\label{Sect6_Strong_Congvergence_Thm}
Let $(\xi,v,\phi,\zeta)$ and $(\tilde{\xi},\tilde{v},\tilde{\phi},\tilde{\zeta})$ be the solutions obtained in Theorem  $\ref{Sect6_Relaxation_Limit_Thm}$, and $\mathcal{K}$ is any compact subset of $\mathbb{R}^3$.
For the ill-prepared data, i.e., $\lim\limits_{\tau\rto 0}\left|v_0(x,\tau) + \frac{1}{k_1\varrho_0(x,\tau)}\nabla\xi_0(x,\tau)\right|_{\infty} \neq 0$,
there exists an initial layer $[0, t^{\ast}]$ with $0<t^{\ast}< C\tau^{2-\delta}$ for the velocity $u$, where $C>0$, $\delta>0$, such that as $\tau\rto 0$,
\begin{equation*}
\begin{array}{ll}
v\rto \tilde{v}\ in\ C((0,T],C^{0+\mu_1}(\mathcal{K})\cap W^{1,\mu_2}(\mathcal{K})),\mu_1\in (0,\frac{1}{2}),2\leq \mu_2<6;
\end{array}
\end{equation*}
for the well-prepared data, i.e., $\lim\limits_{\tau\rto 0}\left\|v_0(x,\tau) + \frac{1}{k_1\varrho_0(x,\tau)}\nabla\xi_0(x,\tau)\right\|_{H^2(\mathbb{R}^3)} =0$, as $\tau\rto 0$,
\begin{equation*}
\begin{array}{ll}
v\rto \tilde{v}\ in\ C([0,T],C^{0+\mu_1}(\mathcal{K})\cap W^{1,\mu_2}(\mathcal{K})),\mu_1\in (0,\frac{1}{2}),2\leq \mu_2<6.
\end{array}
\end{equation*}
\end{theorem}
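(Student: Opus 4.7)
My plan is to isolate the obstruction to strong convergence of $v$ entirely inside the quantity $\eta = v + \frac{1}{k_1\varrho}\nabla\xi$, then control $\eta$ by the damped transport equation it satisfies. The relaxed equation $(\ref{Sect2_Relaxed_Eq})_2$ gives $\tilde v = -\frac{1}{k_1\tilde\varrho}\nabla\tilde\xi$ (up to a small correction from $\frac{1}{\bar\varrho}-\frac{1}{\tilde\varrho}$), so I would write
\begin{equation*}
v - \tilde v \;=\; \eta \;+\; \Bigl(\tfrac{1}{k_1\tilde\varrho}\nabla\tilde\xi - \tfrac{1}{k_1\varrho}\nabla\xi\Bigr).
\end{equation*}
Theorem~\ref{Sect6_Relaxation_Limit_Thm} already gives $\xi\to\tilde\xi$ and $\varrho\to\tilde\varrho$ in $C([0,T],C^{2+\mu_1}(\mathcal{K})\cap W^{3,\mu_2}(\mathcal{K}))$, so the parenthesized difference tends to zero in $C([0,T],C^{1+\mu_1}(\mathcal{K})\cap W^{2,\mu_2}(\mathcal{K}))$. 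The entire question thus reduces to controlling $\eta$ in $C^{0+\mu_1}(\mathcal{K})\cap W^{1,\mu_2}(\mathcal{K})$, and by the 3D Sobolev embeddings $H^2(\mathbb{R}^3)\hookrightarrow C^{1/2}(\mathbb{R}^3)$ and $H^2(\mathbb{R}^3)\hookrightarrow W^{1,6}(\mathbb{R}^3)$, it suffices to show $\|\eta(t)\|_{H^2(\mathbb{R}^3)}\to 0$ (uniformly on the appropriate time interval).

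Next I would establish the pointwise decay $(\ref{Sect2_Wave_Estimate1})$ rigorously from the $\eta$-equation $(\ref{Sect2_Wave_Eq})$. Applying $\mathcal{D}^\beta$ for $0\le|\beta|\le 2$, taking the $L^2$ inner product with $\mathcal{D}^\beta\eta$, and using the damping term $\frac{1}{\tau^2}\eta$ together with the uniform bound on $\tau\cdot[\text{forcing}]$ provided by the estimates of Section 5, I get an ODE of the schematic form
\begin{equation*}
\tfrac{\mathrm d}{\mathrm dt}\|\eta\|_{H^2}^2 + \tfrac{1}{\tau^2}\|\eta\|_{H^2}^2 \;\le\; C,
\end{equation*}
whose Gronwall integration yields exactly
\begin{equation*}
\|\eta(t)\|_{H^2(\mathbb{R}^3)}^2 \;\le\; C\,\|\eta(0)\|_{H^2(\mathbb{R}^3)}^2\,\exp\!\Bigl\{-\tfrac{t}{\tau^2}\Bigr\} + C\tau^2.
\end{equation*}
For the well-prepared case, the hypothesis forces $\|\eta(0)\|_{H^2}\to 0$, so the right-hand side tends to $0$ uniformly for $t\in[0,T]$; combined with the decomposition above this yields the announced convergence on the closed interval $[0,T]$.

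For the ill-prepared case I choose $t^\ast = C\tau^{2-\delta}$ with any fixed $\delta\in(0,2)$, so that $\exp(-t^\ast/\tau^2) = \exp(-C\tau^{-\delta})$ decays faster than any power of $\tau$. Since $\|\eta(0)\|_{H^2}$ stays uniformly bounded by the uniform a priori estimates of Section 5, this gives $\sup_{t^\ast\le t\le T}\|\eta(t)\|_{H^2}\to 0$, hence strong convergence on every $[t^\ast,T]$. By the arbitrariness of $\delta$ (Remark after Theorem~\ref{Sect1_Thm2}), we can shrink $t^\ast$ below any fixed $t_\ast>0$, giving convergence in $C((0,T],C^{0+\mu_1}(\mathcal{K})\cap W^{1,\mu_2}(\mathcal{K}))$. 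The non-uniformity at $t=0$ is genuine: the identity $\eta(0) = v_0 + \frac{1}{k_1\varrho_0}\nabla\xi_0$ does not vanish in the limit, so $v$ cannot converge strongly at $t=0$.

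The main obstacle I expect is keeping the constants truly uniform in $\tau$ while closing the $H^2$ estimate for $\eta$. The forcing terms in $(\ref{Sect2_Wave_Eq})$ contain $\frac{\gamma}{\varrho}p\nabla(\nabla\cdot v)$ and similar second-derivative-of-$v$ quantities, and when we differentiate twice in $x$ to reach $H^2$ we encounter third spatial derivatives of $v$. I would handle this by trading those derivatives through the relation $\eta = \tau^2(v_t + k_1 v\cdot\nabla v)$: the bound $\int_0^T\|\eta_t\|_{H^1}^2\,\mathrm dt\le C\tau^2$ in $(\ref{Sect2_Wave_Estimate2})$ follows from this and the uniform $H^2$-control of $v_t$ and $H^3$-control of $v$ that come from Lemma~\ref{Sect5_Decay_Lemma}. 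Together with $\|\eta\|_{L^2_t H^2_x}\le C\tau$ and the $H^2$-pointwise decay, standard Aubin--Lions arguments give time continuity and recover the $C([t^\ast,T];H^2)$-smallness needed. Once $\|\eta\|_{H^2}$ is small, Sobolev embedding closes the argument, and the weak convergence from Theorem~\ref{Sect6_Relaxation_Limit_Thm} identifies the limit as $\tilde v$.
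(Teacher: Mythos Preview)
Your proposal is correct and follows essentially the same route as the paper: derive the $H^2$ decay estimate for $\eta$ from the damped transport equation $(\ref{Sect2_Wave_Eq})$ via Cauchy--Schwarz and Gronwall, combine it with the time-regularity bound $\int_0^T\|\eta_t\|_{H^1}^2\,\mathrm dt\le C\tau^2$ from $(\ref{Sect2_Eta_Velocity})$, apply Aubin--Lions, and finish with the decomposition $v-\tilde v=\eta+(\tfrac{1}{k_1\tilde\varrho}\nabla\tilde\xi-\tfrac{1}{k_1\varrho}\nabla\xi)$. Your ``obstacle'' paragraph is slightly over-complicated: the third-derivative-of-$v$ forcing terms are handled directly in the paper by the $\tau^2$ weight that appears when splitting $2\langle\mathcal{D}^\alpha\eta,\text{forcing}\rangle\le\frac{1}{2\tau^2}\|\mathcal{D}^\alpha\eta\|_{L^2}^2+2\tau^2\|\text{forcing}\|_{L^2}^2$, since $\tau\nabla^4 v$ is uniformly bounded in $L^2$ by $\mathcal{E}[\tau v]$; no derivative trading through $(\ref{Sect2_Eta_Velocity})$ is needed for the pointwise $H^2$ estimate.
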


\begin{proof}
Apply $\mathcal{D}^{\alpha}$ to $(\ref{Sect2_Wave_Eq})$, where $|\alpha|\leq 2$,
we have
\begin{equation}\label{Sect6_Wave_1}
\begin{array}{ll}
(\mathcal{D}^{\alpha}\eta)_t +\frac{1}{\tau^2}\mathcal{D}^{\alpha}\eta
= \mathcal{D}^{\alpha}[ v\cdot\nabla(\frac{1}{\varrho})\nabla\xi
- \frac{1}{\varrho}(\nabla v) \nabla\xi - \frac{\gamma}{\varrho}\nabla \xi \nabla\cdot v \\[10pt]\quad
- \frac{\gamma}{\varrho}p\nabla(\nabla\cdot v)
+\frac{1}{\varrho^2}(v\cdot\nabla\zeta + \varrho\nabla\cdot v)\nabla\xi]
-k_1 \sum\mathcal{D}^{\alpha_1}v\cdot\nabla\mathcal{D}^{\alpha_2}\eta.
\end{array}
\end{equation}

Let $(\ref{Sect6_Wave_1})\cdot \mathcal{D}^{\alpha}\eta$, we get
\begin{equation}\label{Sect6_Wave_2}
\begin{array}{ll}
(|\mathcal{D}^{\alpha}\eta|^2)_t +\frac{2}{\tau^2}|\mathcal{D}^{\alpha}\eta|^2
= 2\mathcal{D}^{\alpha}\eta\cdot\mathcal{D}^{\alpha}[ v\cdot\nabla(\frac{1}{\varrho})\nabla\xi
- \frac{1}{\varrho}(\nabla v) \nabla\xi - \frac{\gamma}{\varrho}\nabla \xi \nabla\cdot v \\[10pt]\quad
- \frac{\gamma}{\varrho}p\nabla(\nabla\cdot v)
+\frac{1}{\varrho^2}(v\cdot\nabla\zeta + \varrho\nabla\cdot v)\nabla\xi]
-2k_1 \sum\mathcal{D}^{\alpha_1}v\cdot\nabla\mathcal{D}^{\alpha_2}\eta \cdot \mathcal{D}^{\alpha}\eta.
\end{array}
\end{equation}

After integrating $(\ref{Sect6_Wave_2})$ in $\mathbb{R}^3$, we have
\begin{equation}\label{Sect6_Wave_3}
\begin{array}{ll}
\frac{\mathrm{d}}{\mathrm{d}t}\int\limits_{\mathbb{R}^3}|\mathcal{D}^{\alpha}\eta|^2 \,\mathrm{d}x +\frac{2}{\tau^2}\int\limits_{\mathbb{R}^3}|\mathcal{D}^{\alpha}\eta|^2 \,\mathrm{d}x \\[8pt]
= 2 \int\limits_{\mathbb{R}^3}\mathcal{D}^{\alpha}\eta\cdot\mathcal{D}^{\alpha}[ v\cdot\nabla(\frac{1}{\varrho})\nabla\xi
- \frac{1}{\varrho}(\nabla v) \nabla\xi
- \frac{\gamma}{\varrho}\nabla \xi \nabla\cdot v - \frac{\gamma}{\varrho}p\nabla(\nabla\cdot v)
\\[8pt]\quad
+\frac{1}{\varrho^2}(v\cdot\nabla\zeta + \varrho\nabla\cdot v)\nabla\xi]\,\mathrm{d}x
-k_1 \int\limits_{\mathbb{R}^3}v\cdot\nabla|\mathcal{D}^{\alpha}\eta|^2 \,\mathrm{d}x \\[8pt]\quad
-2k_1 \sum\limits_{\alpha_1>0}\ \int\limits_{\mathbb{R}^3}\mathcal{D}^{\alpha_1}v\cdot\nabla\mathcal{D}^{\alpha_2}\eta \cdot \mathcal{D}^{\alpha}\eta\,\mathrm{d}x \\[12pt]

\leq  \frac{1}{2\tau^2} \int\limits_{\mathbb{R}^3}|\mathcal{D}^{\alpha}\eta|^2 \,\mathrm{d}x
+ 2\tau^2\|\mathcal{D}^{\alpha}[ v\cdot\nabla(\frac{1}{\varrho})\nabla\xi
- \frac{1}{\varrho}(\nabla v) \nabla\xi - \frac{\gamma}{\varrho}\nabla \xi \nabla\cdot v \\[8pt]\quad
- \frac{\gamma}{\varrho}p\nabla(\nabla\cdot v)
+\frac{1}{\varrho^2}(v\cdot\nabla\zeta + \varrho\nabla\cdot v)\nabla\xi]\|_{L^2(\mathbb{R}^3)}^2
\\[8pt]\quad
+k_1 \int\limits_{\mathbb{R}^3}\nabla\cdot v|\mathcal{D}^{\alpha}\eta|^2 \,\mathrm{d}x
-2k_1 \sum\limits_{\alpha_1>0}\ \int\limits_{\mathbb{R}^3}\mathcal{D}^{\alpha_1}v\cdot\nabla\mathcal{D}^{\alpha_2}\eta \cdot \mathcal{D}^{\alpha}\eta\,\mathrm{d}x \\[12pt]

\leq  \frac{1}{2\tau^2} \int\limits_{\mathbb{R}^3}|\mathcal{D}^{\alpha}\eta|^2 \,\mathrm{d}x
+ C\|\mathcal{D}^{\alpha}\nabla\xi\|_{L^2(\mathbb{R}^3)}^2 + C\|\mathcal{D}^{\alpha}\nabla\phi\|_{L^2(\mathbb{R}^3)}^2
\\[8pt]\quad
+ C\|\mathcal{D}^{\alpha}\nabla\zeta\|_{L^2(\mathbb{R}^3)}^2
+ \frac{2}{3}[\|\tau\mathcal{D}^{\alpha}v\|_{L^6(\mathbb{R}^3)}^6 + \|\mathcal{D}^{\alpha}\nabla(\frac{1}{\varrho})\|_{L^6(\mathbb{R}^3)}^6
 \\[8pt]\quad
+ (2+\gamma)\|\nabla\nabla(\frac{1}{\varrho})\|_{L^6(\mathbb{R}^3)}^6
+ \|\tau\mathcal{D}^{\alpha}(\nabla v)\|_{L^6(\mathbb{R}^3)}^6
+ (3+\gamma)\|\mathcal{D}^{\alpha}\nabla\xi\|_{L^6(\mathbb{R}^3)}^6 \\[8pt]\quad
+ (1+\gamma)\|\tau\mathcal{D}^{\alpha}(\nabla\cdot v)\|_{L^6(\mathbb{R}^3)}^6
+\|\mathcal{D}^{\alpha}(\frac{\tau v}{\varrho^2})\|_{L^6(\mathbb{R}^3)}^6 + \|\mathcal{D}^{\alpha}\nabla\zeta\|_{L^6(\mathbb{R}^3)}^6]
\\[8pt]\quad
+C|\tau\nabla(\nabla\cdot v)|_{\infty}\|\nabla\nabla(\frac{p}{\varrho})\|_{L^2(\mathbb{R}^3)}^2
+C|\frac{p}{\varrho}|_{\infty}\|\tau\mathcal{D}^{\alpha}\nabla(\nabla\cdot v)\|_{L^2(\mathbb{R}^3)}^2 \\[8pt]\quad

+\frac{1}{4\tau^2}\int\limits_{\mathbb{R}^3}|\mathcal{D}^{\alpha}\eta|^2 \,\mathrm{d}x \cdot
4k_1 \tau|\tau\nabla\cdot v|_{\infty}
-2k_1 \sum\limits_{\alpha_1>0}4\tau|\tau\mathcal{D}^{\alpha_1}v|_{\infty}
\frac{1}{4\tau^2}\int\limits_{\mathbb{R}^3}|\nabla\mathcal{D}^{\alpha_2}\eta|^2 \,\mathrm{d}x
\\[8pt]

\leq \frac{1}{\tau^2}\int\limits_{\mathbb{R}^3}|\mathcal{D}^{\alpha}\eta|^2\,\mathrm{d}x
+ C_{13}\mathcal{E}[\xi,\tau v,\phi,\zeta](t),
\end{array}
\end{equation}
where $0<\tau< \min\{1,\tau_0\}$ is small enough such that
$$4k_1\tau|\tau \nabla\cdot v|_{\infty}+ 8k_1\sum\limits_{\alpha_1>0}\tau|\tau \mathcal{D}^{\alpha_1}v|_{\infty}\leq
C\tau \mathcal{E}[\tau v](t)^{\frac{1}{2}} \leq C\tau_0 \sqrt{\e}\leq 1.$$

Thus,
\begin{equation}\label{Sect6_Wave_4}
\begin{array}{ll}
\frac{\mathrm{d}}{\mathrm{d}t}\int\limits_{\mathbb{R}^3}|\mathcal{D}^{\alpha}\eta|^2 \,\mathrm{d}x +\frac{1}{\tau^2}\int\limits_{\mathbb{R}^3}|\mathcal{D}^{\alpha}\eta|^2 \,\mathrm{d}x
\leq C_{13}\mathcal{E}[\xi,\tau v,\phi,\zeta](t), \\[6pt]

\frac{\mathrm{d}}{\mathrm{d}t}
\left(\exp\{\frac{t}{\tau^2}\}\int\limits_{\mathbb{R}^3}|\mathcal{D}^{\alpha}\eta|^2 \,\mathrm{d}x \right)
\leq C_{13}\exp\{\frac{t}{\tau^2}\}\mathcal{E}[\xi,\tau v,\phi,\zeta](t).
\end{array}
\end{equation}

After integrating from $0$ to $t$, we get
\begin{equation}\label{Sect6_Wave_5}
\begin{array}{ll}
\int\limits_{\mathbb{R}^3}|\mathcal{D}^{\alpha}\eta|^2 \,\mathrm{d}x
\leq \exp\{-\frac{t}{\tau^2}\} \int\limits_{\mathbb{R}^3}|\mathcal{D}^{\alpha}\eta_0|^2 \,\mathrm{d}x
+ C_{13}\int\limits_0^t \exp\{-\frac{t-s}{\tau^2}\}
\mathcal{E}[\xi,\tau v,\phi,\zeta](s)\,\mathrm{d}s \\[6pt]\hspace{1.9cm}

\leq \exp\{-\frac{t}{\tau^2}\} \int\limits_{\mathbb{R}^3}|\mathcal{D}^{\alpha}\eta_0|^2 \,\mathrm{d}x
+ C_{13}\e\int\limits_0^t \exp\{-\frac{t-s}{\tau^2}\}\,\mathrm{d}s \\[6pt]\hspace{1.9cm}

\leq \exp\{-\frac{t}{\tau^2}\} \int\limits_{\mathbb{R}^3}|\mathcal{D}^{\alpha}\eta_0|^2 \,\mathrm{d}x
+ C_{13}\e \tau^2.
\end{array}
\end{equation}

For any fixed $t_{\ast}\in (0,T)$, when $t\geq t_{\ast}$, we have
\begin{equation}\label{Sect6_Wave_6}
\begin{array}{ll}
\int\limits_{\mathbb{R}^3}|\mathcal{D}^{\alpha}\eta|^2 \,\mathrm{d}x
\leq \exp\{-\frac{t_{\ast}}{\tau^2}\} \int\limits_{\mathbb{R}^3}|\mathcal{D}^{\alpha}\eta_0|^2 \,\mathrm{d}x
+ C_{13}\e \tau^2.
\end{array}
\end{equation}

While $\exp\{-\frac{t_{\ast}}{\tau^2}\}\rto 0$, as $\tau\rto 0$.
Sum $\alpha$ and let $\tau\rto 0$, we have
\begin{equation}\label{Sect6_Wave_7}
\begin{array}{ll}
\|\eta\|_{L^{\infty}([t_{\ast},T],H^2(\mathbb{R}^3))}^2 \leq C\sum\limits_{|\alpha|\leq 2}\
\sup\limits_{t\in[t_{\ast},T]}\ \int\limits_{\mathbb{R}^3}|\mathcal{D}^{\alpha}\eta|^2 \,\mathrm{d}x \rto 0.
\end{array}
\end{equation}

It follows from $(\ref{Sect2_Eta_Velocity})$ that
\begin{equation}\label{Sect6_Wave_8}
\begin{array}{ll}
\int\limits_0^T \|\eta_t\|_{H^1(\mathbb{R}^3)}^2 \,\mathrm{d}s
\lem \tau^4\int\limits_0^T \|v_{tt}\|_{H^1(\mathbb{R}^3)}^2 \,\mathrm{d}s
+\tau^2|\tau v_t|_{\infty}^2\int\limits_0^T \|\nabla v\|_{H^1(\mathbb{R}^3)}^2 \,\mathrm{d}s \\[6pt]\hspace{2.7cm}
+\tau^2|\tau v|_{\infty}^2\int\limits_0^T \|\nabla v_t\|_{H^1(\mathbb{R}^3)}^2 \,\mathrm{d}s
\\[6pt]\hspace{2.35cm}
\lem \tau^4\int\limits_0^T \|v_{tt}\|_{H^1(\mathbb{R}^3)}^2 \,\mathrm{d}s
+\tau^2\|\tau v_t\|_{H^2(\mathbb{R}^3)}^2\int\limits_0^T \|\nabla v\|_{H^1(\mathbb{R}^3)}^2 \,\mathrm{d}s \\[6pt]\hspace{2.7cm}
+\tau^2\|\tau v\|_{H^2(\mathbb{R}^3)}^2\int\limits_0^T \|\nabla v_t\|_{H^1(\mathbb{R}^3)}^2 \,\mathrm{d}s
\\[6pt]\hspace{2.35cm}
\lem C\tau^2.
\end{array}
\end{equation}

Then by Aubin's Lemma (see \cite{Simon_1987}):\\
$$L^{\infty}([t_{\ast},T],H^2(\mathbb{R}^3))\cap H^1([t_{\ast},T],H^1(\mathbb{R}^3))\hookrightarrow
C([t_{\ast},T],C^{0+\mu_1}(\mathcal{K})\cap W^{1,\mu_2}(\mathcal{K})),$$
we have
\begin{equation}\label{Sect6_Wave_9}
\begin{array}{ll}
\|\eta\|_{C([t_{\ast},T],C^{0+\mu_1}(\mathcal{K})\cap W^{1,\mu_2}(\mathcal{K}))}\lem C\tau^2.
\end{array}
\end{equation}

Since
\begin{equation}\label{Sect6_Wave_10}
\begin{array}{ll}
v\rto -\frac{1}{k_1\varrho}\nabla\xi\ in\ C([t_{\ast},T],C^{0+\mu_1}(\mathcal{K})\cap W^{1,\mu_2}(\mathcal{K})), \\[6pt]
-\frac{1}{k_1\varrho}\nabla\xi \rto -\frac{1}{k_1\tilde{\varrho}}\nabla\tilde{\xi} = \tilde{v}
\ in\ C([t_{\ast},T],C^{1+\mu_1}(\mathcal{K})\cap W^{2,\mu_2}(\mathcal{K})),
\end{array}
\end{equation}
we have $v\rto \tilde{v}\ in\ C([t_{\ast},T],C^{0+\mu_1}(\mathcal{K})\cap W^{1,\mu_2}(\mathcal{K}))$. Due to the arbitrariness of $t_{\ast}$, we obtain the following convergence for the ill-prepared data:
\begin{equation}\label{Sect6_Convergence1}
\begin{array}{ll}
v\rto \tilde{v}\ in\ C((0,T],C^{0+\mu_1}(\mathcal{K})\cap W^{1,\mu_2}(\mathcal{K})).
\end{array}
\end{equation}

More precisely, for any $t>0$, in order to have $\lim\limits_{\tau\rto 0}\exp\{-\frac{t}{\tau^2}\}=0$ in $(\ref{Sect6_Wave_5})$, it requires $t\geq C\tau^{2-\delta}$, where $C>0,\delta>0$. Thus, the width of the initial layer $[0,t^{\ast}]$ has a upper bound $0<t^{\ast}<C\tau^{2-\delta}$.

While, for the initial data are well-prepared, i.e., $\eta(x,0)=O(\tau)$, it follows from $(\ref{Sect6_Wave_5})$ that
\begin{equation}\label{Sect6_Wave_11}
\begin{array}{ll}
\int\limits_{\mathbb{R}^3}|\mathcal{D}^{\alpha}\eta|^2 \,\mathrm{d}x

\leq \exp\{-\frac{t}{\tau^2}\} \int\limits_{\mathbb{R}^3}|\mathcal{D}^{\alpha}\eta_0|^2 \,\mathrm{d}x
+ C_{13}\e \tau^2 \\[5pt]\hspace{1.9cm}

\leq \int\limits_{\mathbb{R}^3}|\mathcal{D}^{\alpha}\eta_0|^2 \,\mathrm{d}x
+ C_{13}\e \tau^2 \leq C\tau^2.
\end{array}
\end{equation}

Sum $\alpha$, we have that in $[0,T]$, as $\tau\rto 0$,
\begin{equation}\label{Sect6_Wave_12}
\begin{array}{ll}
\|\eta\|_{L^{\infty}([0,T],H^2(\mathbb{R}^3))}^2
\leq C\sum\limits_{|\alpha|\leq 2}
\int\limits_{\mathbb{R}^3}\ |\mathcal{D}^{\alpha}\eta|^2 \,\mathrm{d}x \rto 0.
\end{array}
\end{equation}

By $(\ref{Sect6_Wave_8})$ and $(\ref{Sect6_Wave_12})$, we have
\begin{equation}\label{Sect6_Wave_13}
\begin{array}{ll}
\|\eta\|_{C([0,T],C^{0+\mu_1}(\mathcal{K})\cap W^{1,\mu_2}(\mathcal{K}))}\lem C\tau^2.
\end{array}
\end{equation}

Similarly, we obtain the following convergence for the well-prepared data:
\begin{equation}\label{Sect6_Convergence2}
\begin{array}{ll}
v\rto \tilde{v}\ in\ C([0,T],C^{0+\mu_1}(\mathcal{K})\cap W^{1,\mu_2}(\mathcal{K})).
\end{array}
\end{equation}

Thus, Theorem $\ref{Sect6_Strong_Congvergence_Thm}$ is proved.
\end{proof}

\begin{remark}\label{Sect6_Higher_Regularity_Remark}
Assume $k\geq 5$ is an integer, $\mu_1\in (0,\frac{1}{2}),2\leq \mu_2<6$, the initial data for the relaxing equations $(\ref{Sect2_Relaxing_Eq})$ satisfy $(p_0(x,\tau),u_0(x,\tau),S_0(x,\tau))\in H^k(\mathbb{R}^3)$, similar to Theorem
$\ref{Sect6_Relaxation_Limit_Thm},\ref{Sect6_Strong_Congvergence_Thm}$, we have that for any finite $T>0$, the problem $(\ref{Sect2_Relaxing_Eq})$ admits a unique solution $(\xi,v,\phi,\zeta)$ in $[0,T]$ such that
as $\tau\rto 0$,
\begin{equation*}
\begin{array}{ll}
(\xi,\phi,\zeta) \rto (\tilde{\xi},\tilde{\xi},\tilde{\zeta})\ in\ C([0,T],C^{k-2+\mu_1}(\mathcal{K})\cap W^{k-1,\mu_2}(\mathcal{K})), \\[6pt]
v \rightharpoonup \tilde{v} \ in\ \underset{0\leq \ell\leq 2}{\cap}H^\ell([0,T],H^{k-\ell}(\mathbb{R}^3)),
\end{array}
\end{equation*}

For the ill-prepared data, as $\tau\rto 0$,
\begin{equation*}
\begin{array}{ll}
v\rto \tilde{v}\ in\ C((0,T],C^{k-4+\mu_1}(\mathcal{K})\cap W^{k-3,\mu_2}(\mathcal{K}));
\end{array}
\end{equation*}
for the well-prepared data, as $\tau\rto 0$,
\begin{equation*}
\begin{array}{ll}
v\rto \tilde{v}\ in\ C([0,T],C^{k-4+\mu_1}(\mathcal{K})\cap W^{k-3,\mu_2}(\mathcal{K})).
\end{array}
\end{equation*}
\end{remark}

\section{Extensions}
In this section, we extend the results for the relaxing Cauchy problem $(\ref{Sect1_Relaxing_Eq})$ to
the relaxing equations in periodic domains.

Gagliado-Nirenberg type inequalities $(\ref{Sect2_Sobolev_Ineq})_2$ are used in estimates in $\mathbb{R}^3$, while
the bounded size of $\mathbb{T}^3$ allows the inequality $\|\cdot\|_{L^4(\mathbb{T}^3)}\lem \|\cdot\|_{H^1(\mathbb{T}^3)}$, thus the regularity index of the relaxed equations $(\ref{Sect1_Relaxed_Eq})$ in $\mathbb{T}^3$ can be one order lower than that of Cauchy problem. The existence in $[0,T]$ of classical solutions to $(\ref{Sect1_Relaxed_Eq})$ in periodic domains is stated as follows:
\begin{theorem}\label{Sect7_Relaxed_GlobalExistence_Thm}
$(Existence\ in\ [0,T])$\\[6pt]
Suppose $(\lim\limits_{\tau\rto 0}\xi_0(x,\tau),\lim\limits_{\tau\rto 0}\phi_0(x,\tau))\in H^4(\mathbb{T}^3)\times H^3(\mathbb{T}^3)$, $\inf\limits_{x\in\mathbb{T}^3}\lim\limits_{\tau\rto 0}p_0(x,\tau)>0$.
There exists a sufficiently small number $\delta_2>0$, such that if $\|\lim\limits_{\tau\rto 0}\xi_0(x,\tau)\|_{H^4(\mathbb{T}^3)}+\|\lim\limits_{\tau\rto 0}\phi_0(x,\tau)\|_{H^3(\mathbb{T}^3)}\leq \delta_2$, then the relaxed equations $(\ref{Sect2_Relaxed_Eq})$ in $\mathbb{T}^3$ admits a unique classical solution $(\xi,v,\phi,\zeta)$ satisfying
\begin{equation}\label{Sect7_Relaxed_Global_Regularity}
\begin{array}{ll}
\xi\in \underset{0\leq \ell\leq 1}{\cap}C^{\ell}([0,T],H^{4-\ell}(\mathbb{T}^3)), \\[8pt]
(v,\phi,\zeta)\in \underset{0\leq \ell\leq 1}{\cap}C^{\ell}([0,T],H^{3-\ell}(\mathbb{T}^3)^3), \\[8pt]
\nabla\cdot v,\ \triangle\xi \in C(\mathbb{T}^3\times[0,T]).
\end{array}
\end{equation}
\end{theorem}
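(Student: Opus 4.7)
The plan is to mirror the argument of Theorem \ref{Sect4_GlobalExistence_Thm}, adapted to the periodic setting, with all energy functionals shifted down by one derivative order. The key gain on $\mathbb{T}^3$ is the unconditional Sobolev embedding $H^1(\mathbb{T}^3)\hookrightarrow L^4(\mathbb{T}^3)$, which replaces the Gagliado--Nirenberg inequality $\|\partial\nabla U\|_{L^4(\mathbb{R}^3)}^2\lem |\partial U|_{\infty}\|\partial\nabla^2 U\|_{L^2(\mathbb{R}^3)}$ from \eqref{Sect2_Sobolev_Ineq} that forced one extra derivative when controlling product terms of the form $\mathcal{D}^{\alpha_1}f\cdot\mathcal{D}^{\alpha_2}g$. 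Accordingly I would redefine $\mathcal{F}[\xi]$ as a sum of $\|\mathcal{D}^{\alpha}\xi\|_{L^2(\mathbb{T}^3)}^2$ over $0\leq|\alpha|\leq 3$ plus $\|\mathcal{D}^{\alpha}\xi_t\|_{L^2(\mathbb{T}^3)}^2$ over $0\leq|\alpha|\leq 1$, and similarly truncate $\mathcal{F}[v],\mathcal{F}[\phi],\mathcal{F}[\zeta]$ at $H^3(\mathbb{T}^3)$; with these definitions, the equivalence $c_1\mathcal{F}[\xi]\leq\mathcal{F}_1[\xi]\leq c_2\mathcal{F}[\xi]$ and the structural inequality $\mathcal{F}_X[\xi]\leq c_3\mathcal{F}[v]$ carry over verbatim from Lemmas \ref{Sect3_Epsilon0_Lemma} and \ref{Sect3_Energy_Ineq} because they only use algebraic identities derived from $(\ref{Sect2_Relaxing_Eq})$ and $(\ref{Sect2_Relaxed_Eq})$.

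First I would establish the local existence analogue of Lemma \ref{Sect4_LocalExistence} for the parabolic--hyperbolic system $(\ref{Sect4_Parabolic_Hyperbolic})$ on $\mathbb{T}^3$ via the standard linearization--iteration--convergence scheme, producing a unique classical solution $(\xi,\phi)$ on some interval $[0,T_{**})$ with $\xi\in\bigcap_{0\leq\ell\leq 1}C^{\ell}([0,T_{**}),H^{4-\ell}(\mathbb{T}^3))$, $\phi\in\bigcap_{0\leq\ell\leq 1}C^{\ell}([0,T_{**}),H^{3-\ell}(\mathbb{T}^3))$, and $\triangle\xi\in C(\mathbb{T}^3\times[0,T_{**}))$. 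The divergence-form cancellations $\int_{\mathbb{T}^3}\nabla\cdot(\xi v)\,\mathrm{d}x=0$ and $\int_{\mathbb{T}^3}\nabla\cdot(\mathcal{D}^{\alpha}\xi\mathcal{D}^{\alpha}v)\,\mathrm{d}x=0$ remain valid by periodicity, so the energy identities of Lemmas \ref{Sect3_Energy_Estimate_Lemma2} and \ref{Sect3_Velocity_Lemma} close in exactly the same manner, now with top order $|\alpha|=3$ (spatial) and $|\alpha|=1$ (mixed time--spatial) in place of $4$ and $2$.

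Next I would chain together the periodic analogues of Lemmas \ref{Sect3_Decay_Lemma}--\ref{Sect3_Pressure_Bound_Lemma} to obtain uniform bounds
\begin{equation*}
\tilde{\mathcal{F}}[\xi](t)+\mathcal{F}[v](t)+\mathcal{F}[\phi](t)+\mathcal{F}[\zeta](t)\lem \delta_2\exp\{CT\delta_2\},\quad \int_0^T\mathcal{F}[\nabla\cdot v](s)\,\mathrm{d}s\lem \delta_2,
\end{equation*}
valid on any interval where $\sup_{[0,T]}\mathcal{F}[\xi,v,\phi,\zeta](t)\leq\ve$ holds. Choosing $\delta_2$ sufficiently small so that the right-hand side is strictly less than $\ve$ closes the a priori assumption by a standard continuity argument, which extends the local solution from $[0,T_{**})$ to any prescribed $[0,T]$. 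The continuity of $\nabla\cdot v$ follows from the integrated bound on $\mathcal{F}[\nabla\cdot v]$ together with Aubin--Lions' Lemma $L^2([0,T],H^3(\mathbb{T}^3))\cap H^1([0,T],H^1(\mathbb{T}^3))\hookrightarrow C([0,T],H^2(\mathbb{T}^3))\hookrightarrow C(\mathbb{T}^3\times[0,T])$; the identity $\triangle\xi=-ak_1\varrho\nabla\cdot v-ak_1 v\cdot\nabla\varrho$ then yields $\triangle\xi\in C(\mathbb{T}^3\times[0,T])$.

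The main obstacle is verifying that the lowered regularity $H^4(\mathbb{T}^3)\times H^3(\mathbb{T}^3)$ actually suffices in every nonlinear estimate. The delicate point is the top-order term $\sum_{|\alpha_1|=|\alpha|/2}\|\mathcal{D}^{\alpha_1}(\frac{p}{\varrho})\|_{L^4}\|\mathcal{D}^{\alpha_2}\nabla(\nabla\cdot v)\|_{L^4}\|\mathcal{D}^{\alpha}v\|_{L^2}$ appearing in the analogue of $(\ref{Sect3_Velocity_13})$: in $\mathbb{R}^3$ this was handled by the Gagliado--Nirenberg estimate, whereas on $\mathbb{T}^3$ one must absorb it using only $H^1(\mathbb{T}^3)\hookrightarrow L^4(\mathbb{T}^3)$ combined with $H^2(\mathbb{T}^3)\hookrightarrow L^{\infty}(\mathbb{T}^3)$. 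The arithmetic has to be redone to confirm that, with the reduced index $|\alpha|\leq 3$, every product term splits so that at most one factor sits in $L^{\infty}$ via $H^2$ embedding and the others in $L^2$ or $L^4$ within the declared regularity class; once this bookkeeping is verified, the remainder of the argument is a routine adaptation.
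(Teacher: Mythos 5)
Your proposal matches the paper's own (very terse) treatment of Theorem \ref{Sect7_Relaxed_GlobalExistence_Thm}: the paper does not present an explicit proof, merely asserting that the periodic analogue follows from the $\mathbb{R}^3$ argument of Theorem \ref{Sect4_GlobalExistence_Thm} with one derivative shaved off and the Gagliardo--Nirenberg step replaced by $\|\cdot\|_{L^4(\mathbb{T}^3)}\lesssim\|\cdot\|_{H^1(\mathbb{T}^3)}$, and you reproduce exactly that plan, with appropriately truncated $\mathcal{F}$-energies, the same divergence-form cancellations, the same continuity argument, and the same Aubin--Lions step for $\nabla\cdot v$ and $\triangle\xi$. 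One caveat worth flagging: your stated mechanism for the derivative gain — that the Gagliardo--Nirenberg inequality of \eqref{Sect2_Sobolev_Ineq} ``forced one extra derivative'' while $H^1(\mathbb{T}^3)\hookrightarrow L^4(\mathbb{T}^3)$ does not — is not quite an accurate dichotomy, since $H^1(\mathbb{R}^3)\hookrightarrow L^4(\mathbb{R}^3)$ also holds and the two bounds $\|\nabla^2 U\|_{L^4}\lesssim\|U\|_{H^3}$ obtained from each route need the same regularity of $U$; but this imprecision is inherited verbatim from the paper's own one-sentence explanation, and you correctly and honestly flag that the top-order commutator bookkeeping in the analogue of \eqref{Sect3_Velocity_13} must be re-verified at the reduced index rather than taken for granted.
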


The energy quantities are defined in $(\ref{Sect2_Energy_Define})$ after replacing $\mathbb{R}^3$ with $\mathbb{T}^3$. Due to the convenience of periodic boundary conditions, it is easy to extend the a priori estimates for $\mathbb{R}^3$ to periodic domains $\mathbb{T}^3$. Thus, we have the following lemma:
\begin{lemma}\label{Sect7_Bounds_Lemma}
For any given $T\in (0,+\infty),\tau\in(0,1]$, if
\begin{equation*}
\sup\limits_{0\leq t\leq T} \mathcal{E}[\xi,\tau v,\phi,\zeta](t) \leq\e,
\end{equation*}
where $\e>0$ is sufficiently small, then for $\forall t\in [0,T]$,
\begin{equation}\label{Sect5_Bounds_toProve_1}
\begin{array}{ll}
\mathcal{E}[\xi,\tau v,\phi,\zeta](t) \leq \beta_7, \\[9pt]
\int\limits_{0}^{T} \mathcal{E}[v](s)\,\mathrm{d}s \leq \beta_7,
\end{array}
\end{equation}
for some $\beta_7>0$ which is independent of $\tau$.
\end{lemma}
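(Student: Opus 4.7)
The plan is to mirror the chain of a priori estimates developed for $\mathbb{R}^3$ in Lemmas \ref{Sect5_Decay_Lemma}, \ref{Sect5_Entropy_Lemma} and \ref{Sect5_Varrho_Bound_Lemma}, adapting each step to $\mathbb{T}^3$. The key structural point is that the reformulated system \eqref{Sect2_Relaxing_Eq} is symmetric hyperbolic in $(\xi,v)$ with a dissipation $|v|^2$ coming from the damping, and periodic boundary conditions make all the integrations by parts used in Section 5 go through verbatim (no boundary terms). The only analytic ingredient that differed between $\mathbb{R}^3$ and $\mathbb{T}^3$ was the Gagliardo--Nirenberg-type embedding $(\ref{Sect2_Sobolev_Ineq})_2$; in $\mathbb{T}^3$ this can simply be replaced by the cleaner embedding $\|\cdot\|_{L^4(\mathbb{T}^3)}\lesssim \|\cdot\|_{H^1(\mathbb{T}^3)}$, which in fact simplifies the control of the cubic nonlinearities.

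Concretely, I would proceed in four steps. First, exactly as in Lemma \ref{Sect5_Epsilon0_Lemma}, choose $\e$ small enough that $|\xi|_\infty\leq \bar p/3$ and $|\zeta|_\infty\leq \bar\varrho/2$, which yields equivalence constants $c_4,c_5$ between $\mathcal{E}[\xi,\tau v]$ and the modified energy $\mathcal{E}_1[\xi]+\tau^2\mathcal{E}_1[v]$ (whose definition carries over to $\mathbb{T}^3$ after replacing $\mathbb{R}^3$ by $\mathbb{T}^3$ in \eqref{Sect2_Energy_Define}). Second, apply $\partial_t^\ell \mathcal{D}^\alpha$ with $0\leq \ell\leq 2$, $0\leq \ell+|\alpha|\leq 4$ to \eqref{Sect2_Relaxing_Eq}, pair with $(\partial_t^\ell\mathcal{D}^\alpha\xi,\partial_t^\ell\mathcal{D}^\alpha v)$ and integrate over $\mathbb{T}^3$; periodicity kills the principal cross term $\int k_2\nabla\cdot(\partial_t^\ell\mathcal{D}^\alpha\xi\,\partial_t^\ell\mathcal{D}^\alpha v)\,\mathrm{d}x$, and the cubic terms are handled as in the proof of Lemma \ref{Sect5_Energy_Estimate_Lemma1}, with the $\xi/p$ and $(\varrho/\bar\varrho-1)$ correction integrals absorbing the top-order losses coming from the highest derivative hitting $\xi$ or $\varrho$. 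This produces
\begin{equation*}
\frac{\mathrm{d}}{\mathrm{d}t}\mathcal{E}_1[\xi](t)+\tau^2\frac{\mathrm{d}}{\mathrm{d}t}\mathcal{E}_1[v](t)
+2\mathcal{E}_1[v](t)\leq C\sqrt{\e}(\mathcal{E}_X[\xi](t)+\mathcal{E}[v](t)),
\end{equation*}
the exact periodic analog of \eqref{Sect5_Estimate1_toProve}.

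Third, derive the ``elliptic-in-$\xi$'' estimate $\mathcal{E}_X[\xi](t)\leq c_6\mathcal{E}[v](t)$. Exactly as in Lemma \ref{Sect5_Energy_Ineq}, I would apply $\mathcal{D}^\alpha$ to the rearranged momentum equation $\nabla\xi=-k_1\tau^2\varrho v_t-k_1^2\tau^2\varrho v\cdot\nabla v-k_1\varrho v$ to control spatial derivatives of $\nabla\xi$, and $\mathcal{D}^\alpha$ to \eqref{Sect2_Relaxing_Eq}$_1$ (and its $\partial_t$-derivative) to control $\xi_t$ and $\xi_{tt}$ in terms of divergence-type quantities in $v$; small-data absorption gives $c_6$. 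Substituting back into the previous inequality and choosing $\e$ so small that $C(1+c_6)\sqrt{\e}/c_4<1$ yields the differential inequality
\begin{equation*}
\frac{\mathrm{d}}{\mathrm{d}t}\mathcal{E}_1[\xi](t)+\tau^2\frac{\mathrm{d}}{\mathrm{d}t}\mathcal{E}_1[v](t)+\mathcal{E}_1[v](t)\leq 0,
\end{equation*}
which upon integration in time furnishes a bound $\mathcal{E}[\xi](t)+\mathcal{E}[\tau v](t)+\int_0^t\mathcal{E}[v](s)\,\mathrm{d}s\leq C\|(\xi_0,u_0)\|_{H^4(\mathbb{T}^3)}^2$ independent of $\tau$. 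Fourth, for $\mathcal{E}[\phi]$, pair $\partial_t^\ell\mathcal{D}^\alpha$ of $\phi_t=-k_1v\cdot\nabla\phi$ with $\partial_t^\ell\mathcal{D}^\alpha\phi$, integrate over $\mathbb{T}^3$, and use $\int v\cdot\nabla|\partial_t^\ell\mathcal{D}^\alpha\phi|^2\,\mathrm{d}x=-\int(\nabla\cdot v)|\partial_t^\ell\mathcal{D}^\alpha\phi|^2\,\mathrm{d}x$ (again cleanly valid by periodicity) to reach $\frac{\mathrm{d}}{\mathrm{d}t}\mathcal{E}[\phi]\lesssim \mathcal{E}[v]^{1/2}\mathcal{E}[\phi]$; Gr\"onwall, together with the Cauchy--Schwarz bound $\int_0^T\mathcal{E}[v]^{1/2}\mathrm{d}s\leq \sqrt{T}(\int_0^T\mathcal{E}[v]\mathrm{d}s)^{1/2}\leq \sqrt{T\beta}$, produces $\mathcal{E}[\phi](t)\leq \beta$ uniformly in $\tau$, and the bound for $\mathcal{E}[\zeta]$ then follows from $\zeta=\varrho(\xi+\bar p,\phi+\bar S)-\bar\varrho$ and the composition inequality $\mathcal{E}[\zeta]\lesssim \mathcal{E}[\xi]+\mathcal{E}[\phi]$ exactly as in Lemma \ref{Sect5_Varrho_Bound_Lemma}. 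Taking $\beta_7$ to be the maximum of the constants generated in these four steps completes the proof.

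The main obstacle is purely bookkeeping: one must verify that in the estimate of $I_7$ at top order ($\ell+|\alpha|=4$), the same cancellation trick used in \eqref{Sect5_Estimate2_6}--\eqref{Sect5_Estimate2_10}---namely, packaging the dangerous terms into $-\frac{\mathrm{d}}{\mathrm{d}t}\int\frac{\xi}{p}(\partial_t^\ell\mathcal{D}^\alpha\xi)^2+\tau^2\frac{\mathrm{d}}{\mathrm{d}t}\int(\frac{\varrho}{\bar\varrho}-1)|\partial_t^\ell\mathcal{D}^\alpha v|^2$ and exchanging the remaining factor $\partial_t^\ell\mathcal{D}^\alpha\xi_t+k_1\gamma p\,\nabla\cdot(\partial_t^\ell\mathcal{D}^\alpha v)$ for its commutator expression via \eqref{Sect5_Estimate2_7}---still closes under the weaker embedding $\|\cdot\|_{L^4(\mathbb{T}^3)}\lesssim\|\cdot\|_{H^1(\mathbb{T}^3)}$. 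Since on $\mathbb{T}^3$ this embedding is available for every function (no decay at infinity required), the $\mathbb{T}^3$ argument is in fact slightly easier than the $\mathbb{R}^3$ one, and no new ideas are needed.
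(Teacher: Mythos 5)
Your proposal matches the paper's (implicit) approach: the lemma is obtained by repeating the uniform a~priori estimates of Section 5 (Lemmas \ref{Sect5_Epsilon0_Lemma}--\ref{Sect5_Varrho_Bound_Lemma}) with $\mathbb{R}^3$ replaced by $\mathbb{T}^3$, where periodicity lets every integration by parts go through without boundary terms, and the only analytic substitution (replacing $(\ref{Sect2_Sobolev_Ineq})_2$ by $\|\cdot\|_{L^4(\mathbb{T}^3)}\lesssim\|\cdot\|_{H^1(\mathbb{T}^3)}$) only makes the cubic terms easier to control. Your Gr\"onwall step for $\mathcal{E}[\phi]$ using Cauchy--Schwarz, $\int_0^T\mathcal{E}[v]^{1/2}\,\mathrm{d}s\leq\sqrt{T}\bigl(\int_0^T\mathcal{E}[v]\,\mathrm{d}s\bigr)^{1/2}$, is in fact cleaner than the corresponding passage \eqref{Sect5_Entropy_Prove_4}.
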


In finite time interval $[0,T]$, all these bounds are uniform with respect to $\tau$, thus we can pass to the limit. The following theorem states the relaxation limit of the relaxing equations $(\ref{Sect2_Relaxing_Eq})$ in periodic domains.
\begin{theorem}\label{Sect7_Relaxation_Limit_Thm}
Suppose that the initial data for the relaxing equations $(\ref{Sect2_Relaxing_Eq})$ satisfy $(p_0(x,\tau)-\bar{p},\frac{1}{k_1\tau}u_0(x,\tau),S_0(x,\tau)-\bar{S})\in H^4(\mathbb{T}^3)$.
Moreover, assume that $\inf\limits_{x\in\mathbb{T}^3} p_0(x,\tau)>0$, $\inf\limits_{x\in\mathbb{T}^3}\lim\limits_{\tau\rto 0}p_0(x,\tau)>0$, $\|(p_0(x,\tau)-\bar{p},\frac{1}{k_1}u_0(x,\tau),S_0(x,\tau)-\bar{S})\|_{H^4(\mathbb{T}^3)}$ is sufficiently small for some constants $\bar{p}>0$ and $\bar{S}$. Then for any finite $T>0$, the problem $(\ref{Sect2_Relaxing_Eq})$ admits a unique solution $(\xi,v,\phi,\zeta)$ in $[0,T]$ satisfying
\begin{equation}\label{Sect7_Solution_Regularity}
\begin{array}{ll}
\partial_t^{\ell}\xi,\ \tau \partial_t^{\ell}v,\ \partial_t^{\ell}\phi,\ \partial_t^{\ell}\zeta \in L^{\infty}([0,T],H^{4-\ell} (\mathbb{T}^3)), 0\leq\ell\leq 2, \\[6pt]

\xi,\ v,\ \phi,\ \zeta \in \underset{0\leq \ell\leq 2}{\cap}H^\ell([0,T],H^{4-\ell}(\mathbb{T}^3)),
\end{array}
\end{equation}
such that as $\tau\rto 0$,
\begin{equation}\label{Sect7_Convergence_Result}
\begin{array}{ll}
\xi \rto \tilde{\xi}\ in\ C([0,T],C^{2+\mu_1}(\mathbb{T}^3)\cap W^{3,\mu_2}(\mathbb{T}^3)),
\mu_1\in (0,\frac{1}{2}),2\leq \mu_2<6, \\[6pt]
\phi \rto \tilde{\xi}\ in\ C([0,T],C^{2+\mu_1}(\mathbb{T}^3)\cap W^{3,\mu_2}(\mathbb{T}^3)),
\mu_1\in (0,\frac{1}{2}),2\leq \mu_2<6, \\[6pt]
\zeta \rto \tilde{\zeta}\ in\ C([0,T],C^{2+\mu_1}(\mathbb{T}^3)\cap W^{3,\mu_2}(\mathbb{T}^3)),
\mu_1\in (0,\frac{1}{2}),2\leq \mu_2<6, \\[6pt]
v \rightharpoonup \tilde{v} \ in\ \underset{0\leq \ell\leq 2}{\cap}H^\ell([0,T],H^{4-\ell}(\mathbb{T}^3)),
\end{array}
\end{equation}
for some function $(\tilde{\xi},\tilde{v},\tilde{\phi},\tilde{\zeta})$ which is a weak solution to $(\ref{Sect2_Relaxed_Eq})$ with the data $(\lim\limits_{\tau\rto 0}\xi_0(x,\tau),
\lim\limits_{\tau\rto 0}\phi_0(x,\tau))$. $(\tilde{\xi},\tilde{v},\tilde{\phi},\tilde{\zeta})$ is the classical solution to $(\ref{Sect2_Relaxed_Eq})$, if $(\lim\limits_{\tau\rto 0}\xi_0(x,\tau),
\lim\limits_{\tau\rto 0}\phi_0(x,\tau))\in H^4(\mathbb{T}^3)\times H^3(\mathbb{T}^3)$ is satisfied. 
\end{theorem}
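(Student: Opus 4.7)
The plan is to mirror the argument of Theorem \ref{Sect6_Relaxation_Limit_Thm}, with the simplification that $\mathbb{T}^3$ is itself compact (so no localization via $\mathcal{K}$ is needed) and the complication that the Gagliardo--Nirenberg inequality $(\ref{Sect2_Sobolev_Ineq})_2$ used on $\mathbb{R}^3$ must be replaced by the torus embedding $\|\cdot\|_{L^4(\mathbb{T}^3)}\lesssim \|\cdot\|_{H^1(\mathbb{T}^3)}$. First, I would invoke Lemma \ref{Sect7_Bounds_Lemma} to obtain, under the smallness of the initial data, the uniform bounds $\mathcal{E}[\xi,\tau v,\phi,\zeta](t)\leq \beta_7$ and $\int_0^T \mathcal{E}[v](s)\,\mathrm{d}s\leq \beta_7$ on the a priori interval $[0,T]$. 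Combining these with a standard local existence result for $(\ref{Sect1_NonIsentropic_EulerEq})$ on the torus (Majda, as in the Cauchy case) and the rescaling $(\ref{Sect1_Rescaling_Variables})$, the solution of $(\ref{Sect2_Relaxing_Eq})$ extends uniformly to $[0,T]$ and satisfies the regularities $(\ref{Sect7_Solution_Regularity})$.

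Next, to obtain the convergences $(\ref{Sect7_Convergence_Result})$, I would apply Aubin's lemma in the form
\[
L^\infty([0,T],H^4(\mathbb{T}^3))\cap H^1([0,T],H^3(\mathbb{T}^3))\hookrightarrow C([0,T],C^{2+\mu_1}(\mathbb{T}^3)\cap W^{3,\mu_2}(\mathbb{T}^3)),
\]
for $\mu_1\in(0,\tfrac12)$ and $2\leq \mu_2<6$. By $(\ref{Sect7_Solution_Regularity})$, the sequences $\{\xi\}, \{\phi\}, \{\zeta\}$ are precompact in this strong topology, giving strong convergence (along a subsequence) to limits $\tilde\xi,\tilde\phi,\tilde\zeta$. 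The uniform bound $\int_0^T\mathcal{E}[v](s)\,\mathrm{d}s\leq \beta_7$ together with the bound on $\tau v$ give weak convergence $v\rightharpoonup \tilde v$ in $\cap_{0\leq\ell\leq 2} H^\ell([0,T],H^{4-\ell}(\mathbb{T}^3))$.

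To identify the limit as a weak solution of $(\ref{Sect2_Relaxed_Eq})$, I would test $(\ref{Sect2_Relaxing_Eq})$ against arbitrary $\varphi_1\in H_0^1(\mathbb{T}^3\times[0,T))$, $\vec\varphi_2\in H_0^1(\mathbb{T}^3\times[0,T))^3$ and $\varphi_3\in H_0^1(\mathbb{T}^3\times[0,T))$, obtaining weak formulations analogous to $(\ref{Sect6_Relaxation_Xi_1})$, $(\ref{Sect6_Relaxation_V_1})$ and $(\ref{Sect6_Relaxation_S_1})$. The key observation is that the momentum equation, when tested, produces a right-hand side of size $O(\tau^2\|v_t\|_{L^2}\|\vec\varphi_2\|_{L^2}+\tau\|\tau v\|_{\infty}\|\nabla v\|_{L^2}\|\vec\varphi_2\|_{L^2})\to 0$ as $\tau\to 0$, yielding the limit relation $\tilde v=-\tfrac{1}{k_1\tilde\varrho}\nabla\tilde\xi$ in weak form. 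The nonlinear terms in the pressure and entropy equations pass to the limit because $\xi,\phi,\zeta$ and $\nabla\xi,\nabla\phi,\nabla\zeta$ converge strongly while $v$ converges weakly; the algebraic relation $\tilde p=A\tilde\varrho^\gamma e^{\tilde S}$ persists pointwise by continuity of the equation of state. The main obstacle I expect is verifying that the $\tau\to 0$ passage in the momentum equation is compatible with the weak convergence of $v_t$, i.e., bounding $\tau^2\int v_t\cdot\vec\varphi_2$ uniformly in test functions, but this follows from the uniform control of $\tau v_t$ in $L^2([0,T]\times\mathbb{T}^3)$ provided by $(\ref{Sect7_Solution_Regularity})$.

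Finally, the classical regularity of $(\tilde\xi,\tilde v,\tilde\phi,\tilde\zeta)$ under the stronger hypothesis $(\lim_{\tau\to 0}\xi_0,\lim_{\tau\to 0}\phi_0)\in H^4(\mathbb{T}^3)\times H^3(\mathbb{T}^3)$ follows by invoking Theorem \ref{Sect7_Relaxed_GlobalExistence_Thm}, which gives the existence of a unique classical solution to $(\ref{Sect2_Relaxed_Eq})$ on $[0,T]$; the weak solution constructed above must then coincide with it by the uniqueness of weak solutions to the parabolic--hyperbolic formulation $(\ref{Sect4_Parabolic_Hyperbolic})$ within the appropriate class. Note that the whole sequence converges (not merely a subsequence) precisely because the limit is uniquely determined.
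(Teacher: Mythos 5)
Your proposal follows the same route as the paper's proof: invoke Lemma \ref{Sect7_Bounds_Lemma} for uniform bounds, use them together with local existence and rescaling to get existence on $[0,T]$ and the regularities \eqref{Sect7_Solution_Regularity}, apply Aubin's lemma on $\mathbb{T}^3$ for the strong convergences and weak convergence of $v$, pass to the limit in the weak formulation of \eqref{Sect2_Relaxing_Eq} (with the $O(\tau)$ terms in the momentum equation vanishing), and finally upgrade to a classical solution via Theorem \ref{Sect7_Relaxed_GlobalExistence_Thm} and uniqueness. The argument is correct and matches the paper; the only minor observation is that the paper actually has the stronger uniform control $\int_0^T\mathcal{E}[v]\,\mathrm{d}s\leq\beta_7$ (so $v_t$, not merely $\tau v_t$, is bounded in $L^2([0,T]\times\mathbb{T}^3)$), which gives the momentum-equation remainder as $O(\tau^2)$ rather than the $O(\tau)$ you cite, though either suffices.
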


\begin{proof}
In view of Lemmas $\ref{Sect7_Bounds_Lemma}$, we have the uniform regularities $(\ref{Sect7_Solution_Regularity})$.
Similar to the argument of Theorem $\ref{Sect6_Relaxation_Limit_Thm}$, we have
the uniform existence of $(\ref{Sect1_Relaxing_Eq})$ in $[0,T]$.

In view of the regularities in $(\ref{Sect7_Solution_Regularity})$ and Aubin's Lemma (see \cite{Simon_1987}):\\
$$L^{\infty}([0,T],H^4(\mathbb{T}^3))\cap H^1([0,T],H^3(\mathbb{T}^3))\hookrightarrow
C([0,T],C^{2+\mu_1}(\mathbb{T}^3)\cap W^{3,\mu_2}(\mathbb{T}^3)),$$
so we can obtained the convergence results as $(\ref{Sect7_Convergence_Result})$ stated.

For any $\varphi_1,\varphi_3 \in H^1(\mathbb{T}^3\times[0,T)),\vec{\varphi}_2 \in H^1(\mathbb{T}^3\times[0,T))^3$,
similar to the argument of Theorem $\ref{Sect6_Relaxation_Limit_Thm}$, we have
$\nabla\cdot\xi\rto\nabla\cdot\tilde{\xi}$, $v\rightharpoonup \tilde{v}$, then
\begin{equation}\label{Sect7_Relaxation_Xi}
\begin{array}{ll}
\int\limits_0^t \int\limits_{\mathbb{T}^3}
\partial_t\varphi_1\tilde{\xi}+ \gamma k_1 \tilde{v}\cdot(\tilde{\xi}\nabla\varphi_1 +\varphi_1\nabla\tilde{\xi})
+ \gamma k_1 \tilde{v}\cdot\nabla\varphi_1 - k_1 \varphi_1 \tilde{v}\cdot\nabla \tilde{\xi} \,\mathrm{d}x\mathrm{d}s
\\[10pt]
= -\int\limits_{\mathbb{T}^3}\lim\limits_{\tau\rto 0} \xi_0(x,\tau) \varphi_1(\cdot,0)\,\mathrm{d}x, \\[9pt]

\int\limits_0^t \int\limits_{\mathbb{T}^3}
- \tilde{v} \cdot\vec{\varphi}_2 - \frac{1}{k_1\tilde{\varrho}}\nabla\tilde{\xi}\cdot\vec{\varphi}_2
\,\mathrm{d}x\mathrm{d}s = 0, \\[9pt]

\int\limits_0^t \int\limits_{\mathbb{T}^3}
\tilde{\phi}\partial_t\varphi_3 - k_1 \tilde{v}\cdot\nabla\tilde{\phi} \varphi_3
\,\mathrm{d}x\mathrm{d}s = -\int\limits_{\mathbb{T}^3} \lim\limits_{\tau\rto 0} \phi_0(x,\tau)\varphi_3(\cdot,0)\,\mathrm{d}x, \\[9pt]

\tilde{p}= A \tilde{\varrho}^{\gamma}e^{\tilde{S}}.
\end{array}
\end{equation}

$(\ref{Sect7_Relaxation_Xi})$ implies $(\tilde{\xi},\tilde{v},\tilde{\phi},\tilde{\varrho})$ is the weak solution of the relaxed equations $(\ref{Sect2_Relaxed_Eq})$ in $\mathbb{T}^3$ with initial data $(\lim\limits_{\tau\rto 0}\xi_0,\lim\limits_{\tau\rto 0}\phi_0)$.

As Theorem $\ref{Sect7_Relaxed_GlobalExistence_Thm}$ stated, the problem $(\ref{Sect2_Relaxed_Eq})$ in $\mathbb{T}^3$ with data $(\lim\limits_{\tau\rto 0}\xi_0,\lim\limits_{\tau\rto 0}\phi_0) \in H^4(\mathbb{T}^3)\times H^3(\mathbb{T}^3)$ admits a unique classical solution.
By the uniqueness of the weak solutions to $(\ref{Sect2_Relaxed_Eq})$,
$(\tilde{\xi},\tilde{v},\tilde{\phi},\tilde{\varrho})$
is classical solution of the relaxed equations $(\ref{Sect2_Relaxed_Eq})$ if the data satisfy 
$(\lim\limits_{\tau\rto 0}\xi_0,\lim\limits_{\tau\rto 0}\phi_0) \in H^4(\mathbb{T}^3)\times H^3(\mathbb{T}^3)$. Thus, Theorem \ref{Sect7_Relaxation_Limit_Thm} is proved.
\end{proof}

Besides the weak convergence result of the velocity, we have the strong convergence result of the velocity outside the initial layer for the ill-prepared data and the strong convergence result of the velocity near $t=0$ for the well-prepared data, as the following theorem stated.
\begin{theorem}\label{Sect7_Strong_Congvergence_Thm}
Let $(\xi,v,\phi,\zeta)$ and $(\tilde{\xi},\tilde{v},\tilde{\phi},\tilde{\zeta})$ be the solutions obtained in Theorem  $\ref{Sect7_Relaxation_Limit_Thm}$.
For the ill-prepared data, i.e., $\lim\limits_{\tau\rto 0}\left|v_0(x,\tau) + \frac{1}{k_1\varrho_0(x,\tau)}\nabla\xi_0(x,\tau)\right|_{\infty} \neq 0$,
there exists an initial layer $[0, t^{\ast}]$ with $0<t^{\ast}< C\tau^{2-\delta}$ for the velocity $u$, where $C>0$, $\delta>0$, such that as $\tau\rto 0$,
\begin{equation*}
\begin{array}{ll}
v\rto \tilde{v}\ in\ C((0,T],C^{0+\mu_1}(\mathbb{T}^3)\cap W^{1,\mu_2}(\mathbb{T}^3)),\mu_1\in (0,\frac{1}{2}),2\leq \mu_2<6;
\end{array}
\end{equation*}
for the well-prepared data, i.e., $\lim\limits_{\tau\rto 0}\left\|v_0(x,\tau) + \frac{1}{k_1\varrho_0(x,\tau)}\nabla\xi_0(x,\tau)\right\|_{H^2(\mathbb{T}^3)} =0$, as $\tau\rto 0$,
\begin{equation*}
\begin{array}{ll}
v\rto \tilde{v}\ in\ C([0,T],C^{0+\mu_1}(\mathbb{T}^3)\cap W^{1,\mu_2}(\mathbb{T}^3)),\mu_1\in (0,\frac{1}{2}),2\leq \mu_2<6.
\end{array}
\end{equation*}
\end{theorem}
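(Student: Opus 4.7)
The plan is to mirror the argument of Theorem \ref{Sect6_Strong_Congvergence_Thm}, since the only role played by the choice of spatial domain is to guarantee the vanishing of boundary terms (for $\mathbb{T}^3$ periodicity makes this automatic) and the availability of suitable Sobolev embedding/compactness (on the compact manifold $\mathbb{T}^3$ these are at least as good as on $\mathbb{R}^3$). The central object remains the quantity $\eta = v + \frac{1}{k_1\varrho}\nabla\xi$, which by the same computation as \eqref{Sect2_Wave_Eq} satisfies a transport equation with damping coefficient $1/\tau^2$ on $\mathbb{T}^3$, together with the pointwise identity $\eta = \tau^2(v_t + k_1 v\cdot\nabla v)$.

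First, I would apply $\mathcal{D}^\alpha$ for $|\alpha|\le 2$ to the $\mathbb{T}^3$ version of \eqref{Sect2_Wave_Eq}, pair with $\mathcal{D}^\alpha\eta$, and integrate over $\mathbb{T}^3$. Periodicity kills the convective boundary terms exactly as integration by parts did in Section 6, and the nonlinear terms on the right hand side are controlled by $\mathcal{E}[\xi,\tau v,\phi,\zeta](t)$, which is uniformly bounded in $\tau$ by Lemma \ref{Sect7_Bounds_Lemma}. This produces the differential inequality
\begin{equation*}
\frac{\mathrm{d}}{\mathrm{d}t}\|\mathcal{D}^\alpha\eta\|_{L^2(\mathbb{T}^3)}^2 + \frac{1}{\tau^2}\|\mathcal{D}^\alpha\eta\|_{L^2(\mathbb{T}^3)}^2 \leq C\mathcal{E}[\xi,\tau v,\phi,\zeta](t),
\end{equation*}
whose integration with respect to the integrating factor $\exp\{t/\tau^2\}$ yields
\begin{equation*}
\|\eta(t)\|_{H^2(\mathbb{T}^3)}^2 \leq \|\eta|_{t=0}\|_{H^2(\mathbb{T}^3)}^2 \exp\{-t/\tau^2\} + C\tau^2,
\end{equation*}
as in \eqref{Sect6_Wave_5}. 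The identity $\eta = \tau^2(v_t + k_1 v\cdot\nabla v)$, combined with the uniform $L^2_tH^{4-\ell}_x$ control on $\partial_t^\ell v$ from \eqref{Sect7_Solution_Regularity}, then gives $\int_0^T\|\eta_t\|_{H^1(\mathbb{T}^3)}^2\,\mathrm{d}s \lesssim \tau^2$, exactly as \eqref{Sect6_Wave_8}.

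Second, I would invoke Aubin's lemma in the compactness form
\begin{equation*}
L^\infty([t_\ast,T],H^2(\mathbb{T}^3))\cap H^1([t_\ast,T],H^1(\mathbb{T}^3)) \hookrightarrow C([t_\ast,T],C^{0+\mu_1}(\mathbb{T}^3)\cap W^{1,\mu_2}(\mathbb{T}^3)),
\end{equation*}
which on the compact manifold $\mathbb{T}^3$ is a standard consequence of Simon's interpolation result, to upgrade the above estimates to a quantitative bound $\|\eta\|_{C([t_\ast,T],C^{0+\mu_1}\cap W^{1,\mu_2})} \lesssim \tau^2$ once $t_\ast \geq C\tau^{2-\delta}$ for some $\delta>0$ (this threshold being exactly what makes $\exp\{-t_\ast/\tau^2\}\to 0$). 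Coupling this with the strong convergence $-\frac{1}{k_1\varrho}\nabla\xi \to -\frac{1}{k_1\tilde\varrho}\nabla\tilde\xi = \tilde v$ in $C([0,T],C^{1+\mu_1}(\mathbb{T}^3)\cap W^{2,\mu_2}(\mathbb{T}^3))$ (a direct corollary of \eqref{Sect7_Convergence_Result}), and using the arbitrariness of $t_\ast$, yields the ill-prepared conclusion on $(0,T]$. For the well-prepared case, the hypothesis $\|\eta|_{t=0}\|_{H^2(\mathbb{T}^3)}\to 0$ makes the first term in the energy estimate vanish uniformly on $[0,T]$, so the Aubin embedding can be applied on the full interval $[0,T]$ and the convergence extends up to $t=0$.

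The argument is essentially a transcription, and the main obstacle is therefore not conceptual but organizational: one must verify that every estimate in Section 5 and Section 6 which was previously expressed in $\|\cdot\|_{L^p(\mathbb{R}^3)}$ norms still closes with the $\mathbb{T}^3$ version of \eqref{Sect2_Sobolev_Ineq}. Because $\mathbb{T}^3$ is compact, the embedding $\|\cdot\|_{L^6}\lesssim\|\cdot\|_{H^1}$ remains valid and the $L^4$ embedding even improves, so no estimate is lost; periodicity also makes the integration-by-parts steps used to eliminate the transport term $k_1 v\cdot\nabla\eta$ in the energy identity trivially exact. The rescaling argument used in the proof of Theorem \ref{Sect6_Relaxation_Limit_Thm} to promote $[0,T/\tau]$ local existence of $(\hat p,\hat u,\hat S,\hat\varrho)$ into uniform existence of $(\xi,v,\phi,\zeta)$ on $[0,T]$ also transfers verbatim, since Majda's local existence theorem for hyperbolic systems works equally well on $\mathbb{T}^3$. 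With these verifications the proof reduces to quoting Lemma \ref{Sect7_Bounds_Lemma} in place of the $\mathbb{R}^3$ uniform estimates and repeating the argument of Theorem \ref{Sect6_Strong_Congvergence_Thm} word for word.
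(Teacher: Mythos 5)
Your proposal is correct and follows essentially the same route as the paper: the paper's Section 7 proof of this theorem is indeed a transcription of the proof of Theorem \ref{Sect6_Strong_Congvergence_Thm}, deriving the damped transport inequality for $\|\mathcal{D}^\alpha\eta\|_{L^2(\mathbb{T}^3)}^2$, integrating with the factor $\exp\{t/\tau^2\}$, bounding $\int_0^T\|\eta_t\|_{H^1(\mathbb{T}^3)}^2$ via $\eta=\tau^2(v_t+k_1 v\cdot\nabla v)$, and upgrading to $C([t_\ast,T],C^{0+\mu_1}\cap W^{1,\mu_2})$ via Aubin's lemma before combining with $-\frac{1}{k_1\varrho}\nabla\xi\to\tilde v$, with the uniform bounds supplied by Lemma \ref{Sect7_Bounds_Lemma}.
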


\begin{proof}
After integrating $(\ref{Sect6_Wave_2})$ in $\mathbb{T}^3$, similar to the estimate $(\ref{Sect6_Wave_3})$, we have
\begin{equation}\label{Sect7_Wave_3}
\begin{array}{ll}
\frac{\mathrm{d}}{\mathrm{d}t}\int\limits_{\mathbb{T}^3}|\mathcal{D}^{\alpha}\eta|^2 \,\mathrm{d}x +\frac{1}{\tau^2}\int\limits_{\mathbb{T}^3}|\mathcal{D}^{\alpha}\eta|^2 \,\mathrm{d}x
\leq C_{14}\mathcal{E}[\xi,\tau v,\phi,\zeta](t),
\end{array}
\end{equation}
where $0<\tau< \min\{1,\tau_0\}$ is sufficiently small.

Thus, similar to the estimate $(\ref{Sect6_Wave_5})$, we have
\begin{equation}\label{Sect7_Wave_5}
\begin{array}{ll}
\int\limits_{\mathbb{T}^3}|\mathcal{D}^{\alpha}\eta|^2 \,\mathrm{d}x
\leq \exp\{-\frac{t}{\tau^2}\} \int\limits_{\mathbb{T}^3}|\mathcal{D}^{\alpha}\eta_0|^2 \,\mathrm{d}x
+ C_{14}\e \tau^2.
\end{array}
\end{equation}

For any fixed $t_{\ast}\in (0,T)$, when $t\geq t_{\ast}$, we have
\begin{equation}\label{Sect7_Wave_6}
\begin{array}{ll}
\int\limits_{\mathbb{T}^3}|\mathcal{D}^{\alpha}\eta|^2 \,\mathrm{d}x
\leq \exp\{-\frac{t_{\ast}}{\tau^2}\} \int\limits_{\mathbb{T}^3}|\mathcal{D}^{\alpha}\eta_0|^2 \,\mathrm{d}x
+ C_{14}\e \tau^2.
\end{array}
\end{equation}

While $\exp\{-\frac{t_{\ast}}{\tau^2}\}\rto 0$, as $\tau\rto 0$.
Sum $\alpha$ and let $\tau\rto 0$, we have
\begin{equation}\label{Sect7_Wave_7}
\begin{array}{ll}
\|\eta\|_{L^{\infty}([t_{\ast},T],H^2(\mathbb{T}^3))}^2 \leq C\sum\limits_{|\alpha|\leq 2}\
\sup\limits_{t\in[t_{\ast},T]}\ \int\limits_{\mathbb{T}^3}|\mathcal{D}^{\alpha}\eta|^2 \,\mathrm{d}x \rto 0.
\end{array}
\end{equation}

It follows from $(\ref{Sect2_Eta_Velocity})$ that
\begin{equation}\label{Sect7_Wave_8}
\begin{array}{ll}
\int\limits_0^T \|\eta_t\|_{H^1(\mathbb{T}^3)}^2 \,\mathrm{d}s
\lem \tau^4\int\limits_0^T \|v_{tt}\|_{H^1(\mathbb{T}^3)}^2 \,\mathrm{d}s
+\tau^2\|\tau v_t\|_{H^3(\mathbb{T}^3)}^2\int\limits_0^T \|\nabla v\|_{H^1(\mathbb{T}^3)}^2 \,\mathrm{d}s \\[6pt]\hspace{2.7cm}
+\tau^2\|\tau v\|_{H^3(\mathbb{T}^3)}^2\int\limits_0^T \|\nabla v_t\|_{H^1(\mathbb{T}^3)}^2 \,\mathrm{d}s
\lem C\tau^2.
\end{array}
\end{equation}

Then by Aubin's Lemma (see \cite{Simon_1987}):\\
$$L^{\infty}([t_{\ast},T],H^2(\mathbb{T}^3))\cap H^1([t_{\ast},T],H^1(\mathbb{T}^3))\hookrightarrow
C([t_{\ast},T],C^{0+\mu_1}(\mathbb{T}^3)\cap W^{1,\mu_2}(\mathbb{T}^3)),$$
we have
\begin{equation}\label{Sect7_Wave_9}
\begin{array}{ll}
\|\eta\|_{C([t_{\ast},T],C^{0+\mu_1}(\mathbb{T}^3)\cap W^{1,\mu_2}(\mathbb{T}^3))}\lem C\tau^2.
\end{array}
\end{equation}

Therefore, $v\rto \tilde{v}\ in\ C((0,T],C^{0+\mu_1}(\mathbb{T}^3)\cap W^{1,\mu_2}(\mathbb{T}^3))$ due to the arbitrariness of $t_{\ast}$ and the following convergence results:
\begin{equation}\label{Sect7_Wave_10}
\begin{array}{ll}
v\rto -\frac{1}{k_1\varrho}\nabla\xi\ in\ C([t_{\ast},T],C^{0+\mu_1}(\mathbb{T}^3)\cap W^{1,\mu_2}(\mathbb{T}^3)), \\[6pt]
-\frac{1}{k_1\varrho}\nabla\xi \rto -\frac{1}{k_1\tilde{\varrho}}\nabla\tilde{\xi} = \tilde{v}
\ in\ C([t_{\ast},T],C^{1+\mu_1}(\mathbb{T}^3)\cap W^{2,\mu_2}(\mathbb{T}^3)).
\end{array}
\end{equation}

More precisely, for any $t>0$, in order to have $\lim\limits_{\tau\rto 0}\exp\{-\frac{t}{\tau^2}\}=0$ in $(\ref{Sect7_Wave_5})$, it requires $t\geq C\tau^{2-\delta}$, where $C>0,\delta>0$. Thus, the width of the initial layer $[0,t^{\ast}]$ has a upper bound $0<t^{\ast}<C\tau^{2-\delta}$.

While, for the initial data are well-prepared, i.e., $\eta(x,0)=O(\tau)$, it follows from $(\ref{Sect7_Wave_5})$ that
\begin{equation}\label{Sect7_Wave_11}
\begin{array}{ll}
\int\limits_{\mathbb{T}^3}|\mathcal{D}^{\alpha}\eta|^2 \,\mathrm{d}x

\leq \exp\{-\frac{t}{\tau^2}\} \int\limits_{\mathbb{T}^3}|\mathcal{D}^{\alpha}\eta_0|^2 \,\mathrm{d}x
+ C_{14}\e \tau^2 \\[5pt]\hspace{1.9cm}

\leq \int\limits_{\mathbb{T}^3}|\mathcal{D}^{\alpha}\eta_0|^2 \,\mathrm{d}x
+ C_{14}\e \tau^2 \leq C\tau^2.
\end{array}
\end{equation}

Sum $\alpha$, we have that in $[0,T]$, as $\tau\rto 0$,
\begin{equation}\label{Sect7_Wave_12}
\begin{array}{ll}
\|\eta\|_{L^{\infty}([0,T],H^2(\mathbb{T}^3))}^2
\leq C\sum\limits_{|\alpha|\leq 2}
\int\limits_{\mathbb{T}^3}|\mathcal{D}^{\alpha}\eta|^2 \,\mathrm{d}x \rto 0.
\end{array}
\end{equation}

By $(\ref{Sect7_Wave_8})$ and $(\ref{Sect7_Wave_12})$, we have
\begin{equation}\label{Sect7_Wave_13}
\begin{array}{ll}
\|\eta\|_{C([0,T],C^{0+\mu_1}(\mathbb{T}^3)\cap W^{1,\mu_2}(\mathbb{T}^3))}\lem C\tau^2.
\end{array}
\end{equation}

Similarly, $v\rto \tilde{v}\ in\ C([0,T],C^{0+\mu_1}(\mathbb{T}^3)\cap W^{1,\mu_2}(\mathbb{T}^3))$.
Thus, Theorem $\ref{Sect7_Strong_Congvergence_Thm}$ is proved.
\end{proof}

\bibliographystyle{siam}
\addcontentsline{toc}{section}{References}
\bibliography{FuzhouWu_EulerEq_Relaxation}

\end{document}